\theoremstyle{plain}
\newtheorem{thm}{Theorem}[section]
\newtheorem{lem}[thm]{Lemma}
\theoremstyle{definition}
\newtheorem{rem}[thm]{Remark}
\newtheorem{defi}[thm]{Definition}
\newtheorem{conv}[thm]{Convention}
\numberwithin{thm}{section}
\numberwithin{equation}{section}
\newcommand\ddfrac[2]{\frac{\displaystyle #1}{\displaystyle #2}}
\def\supp{\operatorname{supp}}
\def\esup{\operatornamewithlimits{ess\,sup}}
\def\Id{\operatorname{I}}
\def\ces{\operatorname{Ces}}
\def\cop{\operatorname{Cop}}
\def\ap{\approx}
\def\qq{\qquad}
\def\rw{\rightarrow}
\def\ls{\lesssim}
\def\gs{\gtrsim}
\def\hra{\hookrightarrow}
\def\M{\mathcal M}
\def\a{\alpha}
\def\b{\beta}
\def\la{\lambda}
\def\vp{\varphi}
\def\i{\infty}
\def\I{(0,\i)}
\def\t{\theta}
\def\R{\mathbb R}
\def\R{\mathbb R}
\def\M{\mathfrak M}
\def\W{{\mathcal W}}
\def\mp{{\mathfrak M}}
\def\a{\alpha}
\def\b{\beta}
\def\O{\Omega}
\def\la{\lambda}
\def\vp{\varphi}
\def\i{\infty}
\def\I{(0,\i)}
\def\t{\theta}
\def\dual{\,^{^{\bf c}}\!}
\begin{document}

\title{Embeddings between weighted Copson and Ces\`{a}ro function spaces}

\author[A. Gogatishvili]{Amiran Gogatishvili}
\address{Institute of Mathematics \\
    Academy of Sciences of the Czech Republic \\
    \v Zitn\'a~25 \\
    115~67 Praha~1, Czech Republic} \email{gogatish@math.cas.cz}

\author[R.Ch.Mustafayev]{Rza Mustafayev}
\address{Department of Mathematics \\ Faculty of Science and Arts \\ Kirikkale
    University \\ 71450 Yahsihan, Kirikkale, Turkey}
\email{rzamustafayev@gmail.com}

\author[T.~{\"U}nver]{Tu\v{g}\c{c}e {\"U}nver}
\address{Department of Mathematics \\ Faculty of Science and Arts \\ Kirikkale
    University \\ 71450 Yahsihan, Kirikkale, Turkey}
\email{tugceunver@gmail.com}

\thanks{The research of A. Gogatishvili was partly supported by the grants P201-13-14743S
    of the Grant Agency of the Czech Republic and RVO: 67985840, by
    Shota Rustaveli National Science Foundation grants no. 31/48
    (Operators in some function spaces and their applications in Fourier
    Analysis) and no. DI/9/5-100/13 (Function spaces, weighted
    inequalities for integral operators and problems of summability of
    Fourier series). The research of all authors was partly supported
    by the joint project between  Academy of Sciences of Czech Republic
    and The Scientific and Technological Research Council of Turkey}

\subjclass[2010]{Primary 46E30; Secondary 26D10.}

\keywords{Ces\`{a}ro and Copson function spaces, embeddings, iterated Hardy inequalities}

\begin{abstract}
    In this paper embeddings between weighted Copson function spaces  $\cop_{p_1,q_1}(u_1,v_1)$ and weighted Ces\`{a}ro function spaces
    $\ces_{p_2,q_2}(u_2,v_2)$ are characterized. In particular, two-sided estimates of the optimal constant $c$ in the inequality
    \begin{equation*}
    \bigg( \int_0^{\infty} \bigg( \int_0^t f(\tau)^{p_2}v_2(\tau)\,d\tau\bigg)^{\frac{q_2}{p_2}} u_2(t)\,dt\bigg)^{\frac{1}{q_2}} \le c \bigg( \int_0^{\infty} \bigg( \int_t^{\infty} f(\tau)^{p_1} v_1(\tau)\,d\tau\bigg)^{\frac{q_1}{p_1}} u_1(t)\,dt\bigg)^{\frac{1}{q_1}},
    \end{equation*}
    where $p_1,\,p_2,\,q_1,\,q_2 \in (0,\infty)$, $p_2 \le q_2$ and $u_1,\,u_2,\,v_1,\,v_2$ are weights on $\I$, are obtained. The most innovative part consists of the fact that possibly different parameters $p_1$ and $p_2$ and  possibly different inner weights $v_1$ and $v_2$ are allowed. The proof is based on the combination duality techniques with estimates of optimal constants of the embeddings between  weighted Ces\`{a}ro and Copson spaces and weighted Lebesgue spaces, which reduce the problem to the solutions of the iterated Hardy-type inequalities.
\end{abstract}

\maketitle


\section{Introduction}\label{introduction}

Many Banach spaces which play an important role in functional analysis
and its applications are obtained in a special way: the norms of these spaces are generated by positive sublinear operators and by $L_p$-norms.

In connection with Hardy and Copson operators
$$
(Pf)(x) : = \frac{1}{x} \int_0^x f(t)\,dt \qq \mbox{and} \qq (Qf)(x) : = \int_x^{\infty} \frac{f(t)}{t}\,dt,\qq (x > 0),
$$
the classical Ces\`{a}ro function space
$$
\ces(p) = \bigg\{ f:\, \|f\|_{\ces(p)} : =  \bigg( \int_0^{\infty} \bigg( \frac{1}{x} \int_0^x |f(t)|\,dt \bigg)^p\,dx \bigg)^{\frac{1}{p}} < \infty \bigg\},
$$
and the classical Copson function space
$$
\cop(p) = \bigg\{ f:\, \|f\|_{\cop(p)} : = \bigg( \int_0^{\infty} \bigg( \int_x^{\infty} \frac{|f(t)|}{t}\,dt \bigg)^p\,dx \bigg)^{\frac{1}{p}} < \infty \bigg\},
$$
where $1 < p \le \infty$, with the usual modifications if $p = \infty$, are of interest.

The classical Ces\`{a}ro function spaces $\ces(p)$ have been introduced in 1970 by Shiue \cite{shiue} and subsequently studied in \cite{hashus}.
These spaces have been defined analogously to the Ces\`{a}ro sequence spaces that appeared two years earlier in \cite{prog} when the Dutch Mathematical Society posted a problem to find a representation of their dual spaces. This problem was resolved by Jagers \cite{jagers} in 1974 who gave an explicit isometric description of the dual of Ces\`{a}ro sequence space. In \cite{syzhanglee}, Sy, Zhang and Lee gave a description of dual spaces of $\ces(p)$ spaces based on Jagers' result. In 1996 different, isomorphic description due to Bennett appeared in \cite{bennett1996}. For a long time, Ces\`{a}ro function spaces have not attracted a lot of attention contrary to their sequence counterparts. In fact there is quite rich literature concerning different topics studied in Ces\`{a}ro sequence spaces as for instance in \cites{CuiPluc,cmp,cuihud1999,cuihud2001,chencuihudsims,cuihudli}. However, recently in a series of papers \cites{astasmal2008,astasmal2009,astasmal2010,astasmalig10,astashkinmaligran11,asmal12,asmal13,astas5}, Astashkin and Maligranda started to study the structure of Ces\`{a}ro function spaces. Among others, in \cite{astasmal2009} they investigated dual spaces for $\ces (p)$ for $1 < p < \infty$. Their description can be viewed as being analogous to one given for sequence spaces in \cite{bennett1996} (For more detailed  information about history of classical Ces\`{a}ro spaces see recent survey paper \cite{asmalsurvey}).

In \cite[Theorem 21.1]{bennett1996} Bennett observes that the classical
Ces\`{a}ro function space and the classical Copson function space coincide for
$p > 1$. He also derives estimates for the norms of the
corresponding inclusion operators. The same result, with different
estimates, is due to Boas \cite{boas1970}, who in fact obtained the
integral analogue of the Askey-Boas theorem  \cite[Lemma
6.18]{boas1967} and \cite[Lemma]{askeyboas}. These results
generalized in \cite{grosse} using the blocking technique.

Let $A$ be any measurable subset of $\I$. By $\mp (A)$ we denote the
set of all measurable functions on $A$. The symbol $\mp^+ (A)$
stands for the collection of all $f\in\mp (A)$ which are
non-negative on $A$. The family of all weight functions (also called
just weights) on $A$, that is, measurable, positive and finite a.e.
on $A$, is given by $\W (A)$.

For $p\in (0,\i]$, we define the functional
$\|\cdot\|_{p,A}$ on $\mp (A)$ by
\begin{equation*}
\|f\|_{p,A} : =
\left\{\begin{array}{cl}
\bigg(\int_A |f(x)|^p \,dx \bigg)^{\frac{1}{p}} & \qq\mbox{if}\qq p<\i, \\
\esup_{A} |f(x)| & \qq\mbox{if}\qq p=\i.
\end{array}
\right.
\end{equation*}

If $w\in \W(A)$, then the weighted Lebesgue space
$L_p(w,A)$ is given by
\begin{equation*}
L_p(w,A) \equiv L_{p,w}(A) : = \{f\in \mp (A):\,\, \|f\|_{p,w,A} : = \|fw\|_{p,A} <
\i\},
\end{equation*}
and it is equipped with the quasi-norm $\|\cdot\|_{p,w,A}$. When $A = \I$, we often write simply $L_{p,w}$ and $L_p(w)$ instead of $L_{p,w}(A)$ and $L_p(w,A)$,
respectively.

We adopt the following usual conventions.
\begin{conv}\label{Notat.and.prelim.conv.1.1}
{\rm (i)} Throughout the paper we put $0/0 = 0$, $0 \cdot (\pm \i)
= 0$ and $1 / (\pm\i) =0$.

{\rm (ii)} We put
$$
p' : = \left\{\begin{array}{cl} \frac p{1-p} & \text{if} \quad 0<p<1,\\
\infty &\text{if}\quad p=1, \\
\frac p{p-1}  &\text{if}\quad 1<p<\infty,\\
1  &\text{if}\quad p=\infty.
\end{array}
\right.
$$

{\rm (iii)} If $I = (a,b) \subseteq \R$ and $g$ is a monotone function on $I$, then by $g(a)$ and $g(b)$ we mean the limits $\lim_{x\rw a+}g(x)$ and $\lim_{x\rw b-}g(x)$, respectively.
\end{conv}

To state our results we use the notation $p \rw q$ for
$0 < p,\,q \le \infty$ defined by
$$
\frac{1}{p \rw q} = \frac{1}{q} - \frac{1}{p} \qq \mbox{if} \qq q <
p,
$$
and $p \rw q = \infty$ if $q \ge p$ (see, for instance, \cite[p.
30]{grosse}).

Throughout the paper, we always denote by $c$ and $C$ a positive constant, which is independent of main parameters but it may vary from line to line.
However a constant with subscript or superscript such as $c_1$ does not change in different occurrences.
By $a\lesssim b$, ($b\gtrsim a$) we mean that $a\leq \la b$, where $\la>0$ depends on inessential parameters. If $a\lesssim b$ and $b\lesssim a$, we write $a\approx b$ and say that $a$ and $b$ are equivalent.  We will denote by $\bf 1$ the function ${\bf 1}(x) = 1$, $x \in \R$.

Given two quasi-normed vector spaces $X$ and $Y$, we write $X=Y$
if $X$ and $Y$ are equal in the algebraic and the topological
sense (their quasi-norms are equivalent). The symbol
$X\hookrightarrow Y$ ($Y \hookleftarrow X$) means that $X\subset
Y$ and the natural embedding $\Id$ of $X$ in $Y$ is continuous,
that is, there exist a constant $c > 0$ such that $\|z\|_Y \le
c\|z\|_X$ for all $z\in X$. The best constant of the embedding
$X\hookrightarrow Y$ is $\|\Id\|_{X \rw Y}$.

The weighted Ces\`{a}ro and Copson function spaces are defined as follows:
\begin{defi}\label{defi.2.1}
Let  $0 <p, q \le \infty$, $u \in \mp^+ (I)$, $v\in \W(I)$. The weighted Ces\`{a}ro and Copson spaces are defined by
\begin{align*}
\ces_{p,q} (u,v) : & = \bigg\{ f \in \mp^+ \I: \|f\|_{\ces_{p,q} (u,v)} : = \big\| \|f\|_{p,v,(0,\cdot)}
\big\|_{q,u,\I} < \i \bigg\}, \\
\intertext{and} \cop_{p,q} (u,v) : & = \bigg\{ f \in \mp^+ \I: \|f\|_{\cop_{p,q} (u,v)} : = \big\| \|f\|_{p,v,(\cdot,\i)}   \big\|_{q,u,\I} < \i \bigg\},
\end{align*}
respectively.
\end{defi}
Many function spaces from the literature, in particular from Harmonic Analysis, are covered by the spaces $\ces_{p,q}(u,v)$ and $\cop_{p,q}(u,v)$. Let us only mention the Beurling algebras $A^p$ and $A^*$, see \cite{gil1970,johnson1974,belliftrig}.

Note that the function spaces $C$ and $D$ defined by Grosse-Erdmann in \cite{grosse} are related with our definition in the following way:
$$
\ces_{p,q}(u,v) = C(p,q,u)_v \qq \mbox{and} \qq \cop_{p,q}(u,v) = D(p,q,u)_v.
$$

We use the notations $\ces_p(u) : = \ces_{1,p}(u,{\bf 1})$ and $\cop_p(u) : = \cop_{1,p}(u,{\bf 1})$. Obviously, $\ces(p) = \ces_p (x^{-1})$ and $\cop(p) = \cop_p (x^{-1})$. In \cite{kamkub}, Kami{\'n}ska and Kubiak computed the dual norm of the
Ces\`{a}ro function space $\ces_{p}(u)$, generated by $1 < p < \infty$ and an arbitrary positive weight $u$. A description presented in \cite{kamkub} resembles the approach of Jagers \cite{jagers} for sequence spaces.

Our principal goal in this paper is to investigate the embeddings between weighted Copson and Ces\`{a}ro function spaces and vice versa, that is, the embeddings
\begin{align}
\cop_{p_1,q_1}(u_1,v_1) & \hra \ces_{p_2,q_2}(u_2,v_2), \label{mainemb1}\\
\ces_{p_1,q_1}(u_1,v_1) & \hra \cop_{p_2,q_2}(u_2,v_2). \label{mainemb2}
\end{align}
This is a very difficult and technically complicated task. We
develop an approach consisting of a duality argument combined with
estimates of optimal constants of the embeddings between  weighted
Ces\`{a}ro and Copson spaces and weighted Lebesgue spaces, that is,
\begin{align}
L_s(w) & \hra \ces_{p,q}(u,v), \label{emb1}\\
L_s(w) & \hra \cop_{p,q}(u,v), \label{emb2}\\
L_s(w) & \hookleftarrow \ces_{p,q}(u,v), \label{emb3}\\
L_s(w) & \hookleftarrow \cop_{p,q}(u,v), \label{emb4}
\end{align}
which reduce the problem to the solutions of the iterated Hardy-type inequalities \eqref{eq.4.11}. In order to characterize embeddings \eqref{emb1} - \eqref{emb4}, we are going to use the direct and reverse Hardy-type inequalities. Note that embeddings \eqref{mainemb1} - \eqref{mainemb2} contain embeddings \eqref{emb1} - \eqref{emb4} as a special case. Indeed, for instance, if $p = q$ and $v(x) = w(x) / \|u\|_{p,(x,\infty)}$, then $\ces_{p,q}(u,v) = L_p(w)$. Similarly, if $p = q$ and $v(x) = w(x) / \|u\|_{p,(0,x)}$, then $\cop_{p,q}(u,v) = L_p(w)$. Moreover, by the change of variables $x = {1} / {t}$ it is easy to see that \eqref{mainemb2} is equivalent to the embedding
$$
\cop_{p_1,q_1}(\tilde{u}_1,\tilde{v}_1) \hra
\ces_{p_2,q_2}(\tilde{u}_2,\tilde{v}_2),
$$
where $\tilde{u}_i (t) = t^{- {2} / {q_i}}u_i\big({1} / {t}\big)$, $\tilde{v}_i (t) = t^{- {2} / {p_i}} v_i\big({1} / {t}\big)$,  $i=1,2$, $t > 0$. This note allows us to concentrate our attention on characterization of \eqref{mainemb1}. On the negative side of things we have to admit that the duality approach works only in the case when, in \eqref{mainemb1} - \eqref{mainemb2}, one has $p_2 \le q_2$. Unfortunately, in the case when $p_2 > q_2$ the characterization of these embeddings remain open.

It should be noted that none of the above would ever have existed if it wasn't for the (now classical) well-known characterizations of weights for which the Hardy inequality holds. This subject, which is, incidentally, exactly one hundred years old, is absolutely indispensable   in this part of mathematics. In our proof below such results will be heavily used, as well as the more recent characterizations of the weighted reverse inequalities (cf. \cite{ego2008} and \cite{mu2015}).

It is mentioned in \cite[p. 30]{grosse} that multipliers between Ces\`{a}ro and Copson
spaces  are more difficult to treat. It is worth to mention that by using
characterizations of \eqref{mainemb1} - \eqref{mainemb2} it is
possible to  give the solution to the multiplier problem between
weighted Ces\`{a}ro and Copson function spaces, and we are going to
present it in the future paper.

In particular, we obtain two-sided estimates of the optimal constant $c$ in the inequality
\begin{equation}\label{emb.as.ineq.}
\bigg( \int_0^{\infty} \bigg( \int_0^t f(\tau)^{p_2}v_2(\tau)\,d\tau\bigg)^{\frac{q_2}{p_2}} u_2(t)\,dt\bigg)^{\frac{1}{q_2}} \le c \bigg( \int_0^{\infty} \bigg( \int_t^{\infty} f(\tau)^{p_1} v_1(\tau)\,d\tau\bigg)^{\frac{q_1}{p_1}} u_1(t)\,dt\bigg)^{\frac{1}{q_1}},
\end{equation}
where $p_1,\,p_2,\,q_1,\,q_2 \in (0,\infty)$, $p_2 \le \min \{p_1,\, q_2\}$ and $u_1,\,u_2,\,v_1,\,v_2$ are weights on $\I$ (It is shown in  Lemma \ref{triviality} that inequality \eqref{emb.as.ineq.} holds true only for trivial functions $f$ when $p_1 < p_2$ for any $q_1,\,q_2 \in (0,\infty]$). The most innovative part consists of the fact that possibly different parameters $p_1$ and $p_2$ and  possibly different inner weights $v_1$ and $v_2$ are allowed. Note that \eqref{emb.as.ineq.} was characterized in the particular cases, when $p_1 = p_2 = 1$, $q_1 = q_2 = p > 1$, $u_1(t) = t^{\b p - 1}$, $u_2(t) = t^{-\a p - 1}$, $v_1(t) = t^{-\b - 1}$, $v_2(t) = t^{\a - 1}$, $t > 0$, where $\a > 0$ and $\b > 0$, in \cite[p. 61]{boas1970}, and, when $p_1 = p_2 = 1$, $q_1 = p$, $q_2 = q$,  $u_1(t) = v(t)$, $u_2(t) = t^{-q}w(t)$, $v_1(t) = t^{-1}$, $v_2 (t) = 1$, $t > 0$, where $0 < p \le \infty$, $1 \le q \le \infty$ and $v,\,w$ are weight functions on $\I$, in \cite[Theorem 2.3]{cargogmarpick}, respectively.

The paper is organized as follows. We start with notation and
preliminary results in Section~\ref{s.2}. In Section \ref{s3} we recall characterizations of direct and reverse weighted Hardy-type inequalities.  Solutions of embeddings \eqref{emb1} - \eqref{emb2} and \eqref{emb3} - \eqref{emb4} are given in Sections \ref{s4} and \ref{s5}, respectively. In Section \ref{s6} we recall characterizations of weighted iterated Hardy-type inequalities. The characterization of the embeddings between  weighted Ces\`{a}ro and Copson spaces are obtained in Section \ref{s7}.


\section{Notations and preliminaries}\label{s.2}

Let $A,\,B$ be some sets and $\vp,\,\psi$ be non-negative functions defined on $A \times B$ (It may happen that $\vp (\a,\b)= \i$ or $\psi (\a,\b) = \i$ for some $\a \in A$, $\b \in B$). We
say that $\vp$ is dominated by $\psi$ (or $\psi$ dominates $\vp$)
on $A \times B$ uniformly in $\a \in A$ and write
$$
\vp (\a,\b) \ls \psi (\a,\b) \quad \mbox{uniformly in} \quad \a \in A,
$$
or
$$
\psi (\a,\b) \gs \vp (\a,\b) \quad \mbox{uniformly in} \quad \a \in A,
$$
if for each $\b \in B$ there exists $C(\b) > 0$ such that
$$
\vp (\a,\b) \le C(\b) \psi (\a,\b)
$$
for all $\a \in A$. We also say that $\vp$ is equivalent to $\psi$
on $A \times B$ uniformly in $\a \in A$ and write
$$
\vp (\a,\b) \ap \psi (\a,\b) \quad \mbox{uniformly in} \quad \a \in A,
$$
if $\vp$ and $\psi$ dominate each other on $A \times B$ uniformly
in $\a \in A$.

We need the following auxiliary results.
\begin{lem}\label{NontrivialCesCopLemma} Let $0<p,q\le \i$, $v\in \W\I$ and let $u\in \M^+\I$.
$\ces_{p,q} (u,v)$ and $\cop_{p,q} (u,v)$ are non-trivial, i.e. consists not only of functions equivalent to $0$ on $\I$, if and only if
\begin{equation*}
\|u\|_{q,(t,\i)} < \i, \qq \mbox{for some} \qq t >0,
\end{equation*}
and
\begin{equation*}
\|u\|_{q,(0,t)} < \i,\qq \mbox{for some} \qq t >0,
\end{equation*}
respectively.
\end{lem}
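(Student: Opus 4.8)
The plan is to prove the assertion for $\ces_{p,q}(u,v)$ in full, and then obtain the one for $\cop_{p,q}(u,v)$ by the mirror argument, in which the non-decreasing inner quantity $x\mapsto\|f\|_{p,v,(0,x)}$ is replaced by the non-increasing $x\mapsto\|f\|_{p,v,(x,\i)}$ and the tail $(t,\i)$ is interchanged with the initial segment $(0,t)$. I treat $p,q<\i$ throughout; the cases $p=\i$ or $q=\i$ go through verbatim with the relevant integrals replaced by essential suprema. Write $F(x):=\|f\|_{p,v,(0,x)}=\big(\int_0^x(fv)^p\big)^{1/p}$, which is non-decreasing in $x$, so that $\|f\|_{\ces_{p,q}(u,v)}^q=\int_0^\i F(x)^q u(x)^q\,dx$.

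For the necessity, suppose $\ces_{p,q}(u,v)$ is non-trivial and fix $f\in\mp^+\I$ with $\|f\|_{\ces_{p,q}(u,v)}<\i$ that is not equivalent to $0$. First I would produce a \emph{bounded} set on which the inner integral is simultaneously positive and finite: since $v$ is finite a.e., the sets $\{x\in(1/N,N):0<f(x)\le K,\ v(x)\le M\}$ increase, as $K,M,N\to\i$, to $\{0<f<\i\}$ up to a null set (the degenerate possibility that $f=\i$ on a set of positive measure either contradicts $\|f\|_{\ces_{p,q}(u,v)}<\i$ or forces $u$ to vanish on a tail, in which case the conclusion is immediate), so for suitable $K,M,N$ one of them, call it $E$, has positive measure. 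On $E$ one has $0<(fv)^p\le(KM)^p$, whence $I_0:=\int_E(fv)^p\in(0,\i)$. As $E\subset(0,N)$, monotonicity gives $F(x)^p\ge I_0$ for every $x\ge N$, and therefore
\[
\|f\|_{\ces_{p,q}(u,v)}^q\ge\int_N^\i F(x)^q u(x)^q\,dx\ge I_0^{q/p}\int_N^\i u(x)^q\,dx=I_0^{q/p}\,\|u\|_{q,(N,\i)}^q.
\]
Since the left-hand side is finite and $I_0>0$, this forces $\|u\|_{q,(N,\i)}<\i$, the desired conclusion with $t=N$.

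For the sufficiency, suppose $\|u\|_{q,(t,\i)}<\i$ for some $t>0$. The key idea — and the place where a naive choice of $f$ fails — is that $u^q$ need not be integrable near $0$, so $f$ must be chosen to make $F$ vanish exactly where $u$ may be bad; I therefore support $f$ \emph{to the right of} $t$. Concretely, since $v$ is finite a.e. the set $E:=\{x\in(t,t+1):v(x)\le M\}$ has positive measure for $M$ large, and I put $f:=\chi_E$, which is not equivalent to $0$. Then $F(x)=0$ for $x\le t$, while for $x>t$ one has $F(x)^p=\int_{E\cap(0,x)}v^p\le M^p\,|E|=:c<\i$. Consequently the integrand is supported in $(t,\i)$ and
\[
\|f\|_{\ces_{p,q}(u,v)}^q=\int_t^\i F(x)^q u(x)^q\,dx\le c^{q/p}\int_t^\i u(x)^q\,dx=c^{q/p}\,\|u\|_{q,(t,\i)}^q<\i,
\]
so $f$ is a non-trivial element of $\ces_{p,q}(u,v)$.

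The only genuinely delicate points, which I would take care to get right, are the simultaneous truncation of $f$, $v$ and the domain needed to extract a set $E$ with $0<\int_E(fv)^p<\i$ (so the lower bound in the necessity part is both nonzero and usable), together with the disposal of the degenerate case $f=\i$ on a positive-measure set, and the placement of the support of the test function in the sufficiency part: putting it in $(t,t+1)$ rather than near the origin is precisely what neutralizes a possible non-integrable singularity of $u$ at $0$. The Copson case is the exact mirror image: for sufficiency one takes $E\subset(t/2,t)$ so that $G(x):=\|f\|_{p,v,(x,\i)}$ vanishes for $x\ge t$ and is bounded on $(0,t)$, reducing the norm to a multiple of $\|u\|_{q,(0,t)}$, while for necessity one uses $G(x)\ge I_0^{1/p}$ for $x\le 1/N$ to force $\|u\|_{q,(0,1/N)}<\i$.
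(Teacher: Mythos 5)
Your proof is correct and follows essentially the same route as the paper: the lower bound $\|f\|_{\ces_{p,q}(u,v)}\ge \|u\|_{q,(t_0,\i)}\,\|f\|_{p,v,(0,t_0)}$ for a nonzero $f$ gives one direction, and a test function supported in $(t,\i)$ lying in $L_p(v)$ gives the other. Your only additions are harmless extra care: the explicit construction of the test function as a characteristic function of a set where $v$ is bounded (which the paper leaves implicit), and the truncation argument producing a set with $0<\int_E (fv)^p<\i$ (where in fact only positivity is needed for the lower bound).
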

\begin{proof}
\textit{Sufficiency.} Let $u\in \M^+\I$ be such that $\|u\|_{q,(t,\i)}=\i$ for all $t>0$. Assume that $f\not = 0$ a.e.
Then $\|f\|_{p,v,(0,t_0)}>0$ for some $t_0>0$.
\begin{align*}
\|f\|_{\ces_{p,q}(u,v)} \geq  \big\| \|f\|_{p,v,(0,\cdot)}
\big\|_{q,u,(t_0,\i)} \geq \|{\bf{1}}\|_{q,u,(t_0,\i)}
\|f\|_{p,v,(0,t_0)} = \|u\|_{q,(t_0,\i)} \|f\|_{p,v,(0,t_0)}.
\end{align*}
Hence $\|f\|_{\ces_{p,q}(u,v)}=\i$. Consequently, if $
\|f\|_{\ces_{p,q}(u,v)} < \i $, then $f = 0$ a.e., that is,
$\ces_{p,q}(u,v)=\{0\}$.

\textit{Necessity.}  Assume that $\|u\|_{q,(t,\i)}<\i$ for some
$t>0$. If $f\in L_p(v)$ such that $\supp f \subset (\tau,\infty)$
for some $\tau \ge t$, then $f \in \ces_{p,q}(u,v)$. Indeed:
\begin{align*}
\|f\|_{\ces_{p,q}(u,v)} = \big\| \|f\|_{p,v,(0,\cdot)}
\big\|_{q,u,(\tau, \i)} \leq \|{\bf{1}}\|_{q,u,(\tau, \i)}
\|f\|_{p,v,(0,\i)} = \|u\|_{q,(\tau, \i)} \|f\|_{p,v,(0,\i)} < \i.
\end{align*}

The same conclusion can be deduced for the Copson spaces.
\end{proof}
\begin{lem}\label{shrinkagelemma}
If $\|u\|_{q,(t_1,\infty)}=\infty$ for some $t_1 > 0$, then
$$
f\in \ces_{p,q}(u,v) \Rightarrow f = 0 \quad \mbox{a.e. on} \quad (0,t_1).
$$
If $\|u\|_{q,(0,t_2)}=\infty$ for some $t_2 > 0$, then
$$
f\in \cop_{p,q}(u,v) \Rightarrow f = 0 \quad \mbox{a.e. on} \quad (t_2,\infty).
$$
\end{lem}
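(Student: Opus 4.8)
The plan is to prove both assertions by contradiction, localizing the sufficiency argument already used in Lemma~\ref{NontrivialCesCopLemma}. The only structural facts I will need are that the inner norm $t \mapsto \|f\|_{p,v,(0,t)}$ is non-decreasing, that the outer functional $\|\cdot\|_{q,u,A}$ is monotone and positively homogeneous (as a quasi-norm this holds for every $0 < q \le \i$), and that $\|{\bf 1}\|_{q,u,A} = \|u\|_{q,A}$ grows as $A$ enlarges.

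First I would treat the Ces\`aro statement. Assume $f \in \ces_{p,q}(u,v)$, so that $\|f\|_{\ces_{p,q}(u,v)} < \i$, and suppose towards a contradiction that $f$ does not vanish a.e. on $(0,t_1)$. Since $f \in \mp^+\I$ and $v$ is a weight (hence positive a.e.), the set $\{x \in (0,t_1): f(x) > 0\}$ has positive measure, which forces $\|f\|_{p,v,(0,t_1)} > 0$. I would then use that for every $t > t_1$ one has $(0,t_1) \subset (0,t)$, whence $\|f\|_{p,v,(0,t)} \ge \|f\|_{p,v,(0,t_1)}$. Restricting the outer quasi-norm to the tail $(t_1,\i)$ and combining monotonicity with homogeneity gives
\begin{align*}
\|f\|_{\ces_{p,q}(u,v)} \ge \big\| \|f\|_{p,v,(0,\cdot)} \big\|_{q,u,(t_1,\i)} \ge \|f\|_{p,v,(0,t_1)} \, \big\| {\bf 1} \big\|_{q,u,(t_1,\i)} = \|f\|_{p,v,(0,t_1)} \, \|u\|_{q,(t_1,\i)}.
\end{align*}
By hypothesis $\|u\|_{q,(t_1,\i)} = \i$, while $\|f\|_{p,v,(0,t_1)} > 0$, so the right-hand side is $\i$, contradicting $\|f\|_{\ces_{p,q}(u,v)} < \i$. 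Hence $f = 0$ a.e. on $(0,t_1)$.

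The Copson statement is entirely symmetric, and I would obtain it by the reflected argument: assuming $f \in \cop_{p,q}(u,v)$ fails to vanish a.e. on $(t_2,\i)$ gives $\|f\|_{p,v,(t_2,\i)} > 0$; now the tail norm $t \mapsto \|f\|_{p,v,(t,\i)}$ is non-increasing, so $\|f\|_{p,v,(t,\i)} \ge \|f\|_{p,v,(t_2,\i)}$ for $t < t_2$, and restricting the outer quasi-norm to $(0,t_2)$ produces $\|f\|_{\cop_{p,q}(u,v)} \ge \|f\|_{p,v,(t_2,\i)} \, \|u\|_{q,(0,t_2)} = \i$, again a contradiction. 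Alternatively, one could deduce it from the Ces\`aro case via the change of variables $x \mapsto 1/x$ already invoked in the introduction to pass between \eqref{mainemb1} and \eqref{mainemb2}.

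I do not expect a genuine obstacle here, since the statement is essentially a support-localized refinement of Lemma~\ref{NontrivialCesCopLemma}. The only point demanding a little care is that the divergence of $\|u\|_{q,\cdot}$ is assumed only on the single tail $(t_1,\i)$ (respectively the single initial segment $(0,t_2)$), so the outer quasi-norm must be restricted to precisely that set rather than to an arbitrary subinterval; choosing the cut-off to coincide exactly with $t_1$ (respectively $t_2$) makes this transparent and avoids any limiting argument in the selection of the threshold.
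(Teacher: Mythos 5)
Your proposal is correct and follows essentially the same route as the paper: the key step in both is the chain $\|f\|_{\ces_{p,q}(u,v)} \ge \big\| \|f\|_{p,v,(0,\cdot)} \big\|_{q,u,(t_1,\i)} \ge \|u\|_{q,(t_1,\i)}\,\|f\|_{p,v,(0,t_1)}$, which forces $\|f\|_{p,v,(0,t_1)}=0$; the paper merely states this as a direct deduction while you phrase it as a contradiction. The Copson half is handled by the same symmetric estimate in both arguments.
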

\begin{proof}
Assume that $\|u\|_{q,(t_1,\infty)}=\infty$ for some $t_1 > 0$ and let $f\in \ces_{p,q}(u,v)$. Then
\begin{align*}
\|f\|_{\ces_{p,q}(u,v)} \geq \big\| \|f\|_{p,v,(0,\cdot)} \big\|_{q,u,(t_1,\infty)} \geq  \|u\|_{q,(t_1,\infty)} \|f\|_{p,v,(0,t_1)}.
\end{align*}
Therefore, $\|f\|_{p,v,(0,t_1)}=0$. Hence, $f = 0$ a.e. on $(0,t_1)$.

Assume now that $\|u\|_{q,(0,t_2)}=\infty$ for some $t_2 > 0$ and let $f\in \cop_{p,q}(u,v)$. Then
\begin{align*}
\|f\|_{\cop_{p,q}(u,v)} \geq \big\| \|f\|_{p,v,(\cdot,\infty)} \big\|_{q,u,(0,t_2)} \geq  \|u\|_{q,(0,t_2)} \|f\|_{p,v,(t_2,\infty)}.
\end{align*}
Consequently, $\|f\|_{p,v,(t_2,\infty)}=0$. This yields that $f = 0$ a.e. on $(t_2,\infty)$.
\end{proof}
\begin{rem}
In view of Lemmas~\ref{NontrivialCesCopLemma} and \ref{shrinkagelemma}, it is enough to take $u\in \M^+(0,\infty)$ such that $\|u\|_{q,(t,\infty)}<\infty$ for all $t>0$, when considering $\ces_{p,q}(u,v)$ spaces. Similarly, it is enough to take $u\in \M^+(0,\infty)$ such that $\|u\|_{q,(0,t)}<\infty$ for all $t>0$, when considering $\cop_{p,q}(u,v)$ spaces.
\end{rem}

\begin{defi}
Let $0<q\leq \i$. We denote by $\O_q$ the set of all functions $u \in \mp^+ \I$ such that
$$
0<\|u\|_{q,(t,\i)}<\i,~~ t>0,
$$
and by $\dual{\O}_q$ the set of all functions $u \in \mp^+ \I$ such that
$$
0<\|u\|_{q,(0,t)}<\i,~~ t>0.
$$
\end{defi}
Let $v \in \W\I$. It is easy to see that $\ces_{p,q} (u,v)$ and $\cop_{p,q} (u,v)$ are quasi-normed vector spaces when $u \in \O_q$ and $u \in \dual{\O}_q$, respectively.

Note that $\ces_{p,p}(u,v)$ and $\cop_{p,p}(u,v)$ coincide with some weighted Lebesgue spaces.

\begin{lem}\label{Cespp}
Let $0<p\le \i$, $u\in \O_p$ and $v\in \W\I$. Then $\ces_{p,p}(u,v)=L_p(w)$,
where
\begin{equation}\label{WeightCesaro}
w(x) := v(x)\|u\|_{p,(x,\i)}, ~ x > 0.
\end{equation}
\end{lem}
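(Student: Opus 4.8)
The plan is to verify the claimed identity of norms directly; in fact $\ces_{p,p}(u,v) = L_p(w)$ will hold as an \emph{isometry}, not merely up to equivalence of quasi-norms. First I would record that $w$ is a genuine weight: since $v \in \W\I$ is positive and finite a.e.\ and the hypothesis $u \in \O_p$ guarantees $0 < \|u\|_{p,(x,\i)} < \i$ for every $x>0$, the product $w(x) = v(x)\|u\|_{p,(x,\i)}$ is positive and finite a.e.\ on $\I$, so that $w \in \W\I$ and $L_p(w)$ is well defined.

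For $0 < p < \i$ I would simply expand the Ces\`{a}ro norm with $q = p$ and interchange the order of integration. All integrands being non-negative and measurable, Tonelli's theorem applies over the region $\{0 < \tau < t\}$ and gives
\begin{align*}
\|f\|_{\ces_{p,p}(u,v)}^p
&= \int_0^\i \Big( \int_0^t |f(\tau)|^p v(\tau)^p \, d\tau \Big) u(t)^p \, dt \\
&= \int_0^\i |f(\tau)|^p v(\tau)^p \Big( \int_\tau^\i u(t)^p \, dt \Big) d\tau = \int_0^\i |f(\tau)|^p w(\tau)^p \, d\tau = \|f\|_{L_p(w)}^p,
\end{align*}
where I used $\int_\tau^\i u(t)^p\,dt = \|u\|_{p,(\tau,\i)}^p$ and hence $v(\tau)^p\|u\|_{p,(\tau,\i)}^p = w(\tau)^p$. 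This is exactly $\|f\|_{L_p(w)}^p$, so the two norms coincide and $\ces_{p,p}(u,v) = L_p(w)$.

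For $p = \i$ I would show that the same identity persists with essential suprema in place of integrals. Here
$$
\|f\|_{\ces_{\i,\i}(u,v)} = \esup_{t>0}\Big( u(t)\, \esup_{0<\tau<t} |f(\tau)|v(\tau)\Big), \qq \|f\|_{L_\i(w)} = \esup_{\tau>0} \Big(|f(\tau)| v(\tau)\, \esup_{t>\tau} u(t)\Big),
$$
and both coincide with the two-dimensional essential supremum of $(\tau,t) \mapsto |f(\tau)|v(\tau)u(t)$ over the triangle $\{0<\tau<t\}$. The one nonroutine point is this interchange of iterated essential suprema, which I would justify by the standard fact that on a $\sigma$-finite product space the iterated essential supremum of a non-negative measurable function equals its essential supremum over the product, applied in either order to $|f(\tau)|v(\tau)u(t)\,\chi_{\{\tau<t\}}$.

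I do not expect any serious obstacle: the entire statement is a computation, with the $p=\i$ case — the interchange of essential suprema — being the only step that requires a measure-theoretic lemma beyond Tonelli's theorem.
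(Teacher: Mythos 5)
Your proof is correct and follows essentially the same route as the paper: Tonelli's theorem to interchange the order of integration for $0<p<\infty$, and an interchange of iterated essential suprema for $p=\infty$. The only difference is cosmetic — you make explicit that $w\in\W\I$ and justify the supremum interchange via the essential supremum over the product region, details the paper leaves implicit.
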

\begin{proof}
Assume first that $p<\i$. Applying Fubini's Theorem, we have
\begin{align*}
\|f\|_{\ces_{p,p}(u,v)}&=\bigg(\int_0^\i u^p(t) \int_0^t f(\tau)^p v(\tau)^p \,d\tau \,dt\bigg)^{\frac{1}{p}}\\
&= \bigg(\int_0^\i  f(\tau)^p v(\tau)^p \int_{\tau}^\i u(t)^p \, dt\, d\tau\bigg)^{\frac{1}{p}}\\
&=\|f\|_{p,w,\I},
\end{align*}
where $w$ is defined by \eqref{WeightCesaro}. If $p=\i$, by exchanging suprema, we have
\begin{align*}
\|f\|_{\ces_{\i,\i}(u,v)}&=\esup_{t\in \I} u(t) \esup_{\tau\in(0,t)} f(\tau)v(\tau)\\
&=\esup_{t\in \I} f(t)v(t) \esup_{\tau \in (t,\i)} u(\tau)\\
&= \|f\|_{\i,w,\I}.
\end{align*}
\end{proof}
\begin{lem}\label{Coppp}
Let $0<p\le \i$, $u\in \dual{\O_p}$ and $v\in \W\I$. Then
$\cop_{p,p}(u,v)=L_p(w)$, where
\begin{equation}\label{WeightCopson}
w(x) := v(x)\|u\|_{p,(0,x)}, ~ x > 0.
\end{equation}
\end{lem}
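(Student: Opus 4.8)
The plan is to mirror the proof of Lemma~\ref{Cespp}, simply interchanging the roles of the intervals $(0,t)$ and $(t,\i)$ throughout; two cases arise according to whether $p$ is finite.

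When $p<\i$, I would write the quasi-norm out in full, observe that the integrand is non-negative, and apply Fubini's Theorem. Explicitly,
\begin{align*}
\|f\|_{\cop_{p,p}(u,v)} &= \bigg( \int_0^\i u(t)^p \int_t^\i f(\tau)^p v(\tau)^p \, d\tau \, dt \bigg)^{\frac{1}{p}} \\
&= \bigg( \int_0^\i f(\tau)^p v(\tau)^p \int_0^\tau u(t)^p \, dt \, d\tau \bigg)^{\frac{1}{p}} = \|f\|_{p,w,\I},
\end{align*}
where the interchange rests on the identity $\{(t,\tau):0<t<\i,\ t<\tau<\i\}=\{(t,\tau):0<\tau<\i,\ 0<t<\tau\}$, and $w$ is the weight given by \eqref{WeightCopson}.

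For $p=\i$ I would instead exchange the two essential suprema. Starting from
\begin{align*}
\|f\|_{\cop_{\i,\i}(u,v)} = \esup_{t\in\I} u(t) \esup_{\tau\in(t,\i)} f(\tau)v(\tau),
\end{align*}
and reading the constraint $t<\tau$ as $t\in(0,\tau)$ for each fixed $\tau$, the swap produces $\esup_{\tau\in\I} f(\tau)v(\tau)\,\esup_{t\in(0,\tau)} u(t)=\|f\|_{\i,w,\I}$, once more with $w$ as in \eqref{WeightCopson}.

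There is no substantive obstacle here. The hypothesis $u\in\dual{\O_p}$ is exactly what guarantees $0<\|u\|_{p,(0,x)}<\i$ for every $x>0$, so that $w(x)=v(x)\|u\|_{p,(0,x)}$ is positive and finite a.e.\ on $\I$ and hence a genuine weight. The only point that genuinely differs from Lemma~\ref{Cespp} is the bookkeeping of the integration region under the interchange, namely that the constraint $t<\tau$ now becomes $0<t<\tau$; this is precisely the reflection of the Ces\`aro computation.
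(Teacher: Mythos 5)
Your proposal is correct and is exactly what the paper intends: the paper's proof of this lemma consists of the single remark that it ``follows by the same method as in Lemma~\ref{Cespp},'' and you have carried out precisely that method, applying Fubini's Theorem over the region $\{0<t<\tau\}$ for $p<\i$ and exchanging the essential suprema for $p=\i$, arriving at the weight $w(x)=v(x)\|u\|_{p,(0,x)}$.
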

\begin{proof}
This follows by the same method as in Lemma~\ref{Cespp}.
\end{proof}



\section{Some Hardy-type inequalities}\label{Hardy Type Inequalities}\label{s3}

In this section we recall characterizations of direct and reverse weighted Hardy-type inequalities.
Denote by
$$
(Hf) (t) : = \int_0^t f(x)\,dx,  \qq
(H^*f) (t) : = \int_t^{\i} f(x)\,dx,  \qq f \in \mp^+\I,\qq t \ge 0.
$$
The well-known two-weight Hardy-type inequalities
\begin{equation}\label{eq.2.2}
\|Hf\|_{q,w,\I}\leq c \|f\|_{p,v,\I}
\end{equation}
and
\begin{equation}\label{eq.2.200}
\|H^*f\|_{q,w,\I}\leq c \|f\|_{p,v,\I}
\end{equation}
for all non-negative measurable functions $f$ on $(0,\infty)$, where
$0 < p,\,q \le \infty$ with $c$ being a constant independent of $f$,
have a broad variety of applications and represents now a basic tool
in many parts of mathematical analysis, namely in the study of
weighted function inequalities. For the results, history and
applications of this problem, see \cites{ok,kp, kufmalpers}.

\begin{thm}\label{HardyIneq}
Let $1 \le p \le \i$, $0 < q \le \i$, $v,\,w \in \mp^+\I$.
Then inequality \eqref{eq.2.2}
holds for all $f \in \mp^+\I$ if and only if $A(p,q) < \i$, and the
best constant in \eqref{eq.2.2}, that is,
\begin{equation*}
B(p,q) : = \sup_{f\in \mp^+\I} \|Hf\|_{q,w,\I} /
\|f\|_{p,v,\I}
\end{equation*}
satisfies $B(p,q) \ap A(p,q)$, where

{\rm (i)} for $p \le q$,
$$
A(p,q): = \sup_{t \in \I} \big\|v^{-1}\big\|_{p',(0,t)} \|w\|_{q,(t,\i)} \,;
$$

{\rm (ii)} for $q<p$  and $\frac{1}{r} = \frac{1}{q} -  \frac{1}{p}$,
$$
A(p,q) : = \bigg(\int_0^{\infty} \big\|v^{-1}\big\|_{p',(0,t)}^{r} d \bigg(- \|w\|_{q,(t,\i)}^r\bigg)\bigg)^{\frac{1}{r}}.
$$
\end{thm}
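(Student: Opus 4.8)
The plan is to prove the two halves $B(p,q)\gs A(p,q)$ (necessity) and $B(p,q)\ls A(p,q)$ (sufficiency) separately, and within each to distinguish the regimes $p\le q$ (case (i)) and $q<p$ (case (ii)). Throughout I abbreviate $V(t):=\|v^{-1}\|_{p',(0,t)}$ and $W(t):=\|w\|_{q,(t,\i)}$; the decisive structural fact is that $V$ is nondecreasing while $W$ is nonincreasing, which is exactly what makes a dyadic discretization available. After a routine monotone approximation I may assume $V$ is continuous and strictly increasing onto its range, so that points $t_k$ with $V(t_k)=2^k$ exist; the blocks are $\Delta_k:=[t_k,t_{k+1})$, and I record that $\|v^{-1}\|_{p',\Delta_j}^{p'}=V(t_{j+1})^{p'}-V(t_j)^{p'}\ap 2^{jp'}$. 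Degenerate configurations ($p\in\{1,\i\}$, $V(0^+)>0$ or $V(\i)<\i$, or atoms of $d(-W^r)$) are absorbed by Convention~\ref{Notat.and.prelim.conv.1.1} and limiting arguments.

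Necessity is the softer half and proceeds by testing. In case (i) I fix $t\in\I$ and insert $f_t:=v^{-p'}\chi_{(0,t)}$. For $s\ge t$ one has $(Hf_t)(s)=\int_0^t v^{-p'}=V(t)^{p'}$, so that $\|Hf_t\|_{q,w,\I}\ge V(t)^{p'}W(t)$, while the identity $p(1-p')=-p'$ gives $\|f_t\|_{p,v,\I}=V(t)^{p'/p}$. Dividing and using $p'(1-1/p)=1$ yields $B(p,q)\ge V(t)W(t)$ for every $t$, and the supremum over $t$ is precisely $A(p,q)$ (with the obvious bump-function modification when $p=1$, where $V(t)=\esup_{(0,t)}v^{-1}$). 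In case (ii) I instead test with a function adapted to the Stieltjes measure $d\sigma:=d(-W^r)$ defining $A(p,q)$, essentially $f:=v^{-p'}(V^{p'})^{r/p'-1}$ truncated to a compact subinterval, chosen so that $Hf$ reproduces a power of $V$ and the resulting ratio, after integration by parts against $d\sigma$, reconstructs $\big(\int_0^\i V^r\,d\sigma\big)^{1/r}$ up to a constant; exhausting $\I$ by the truncations gives $B(p,q)\gs A(p,q)$.

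For sufficiency I estimate $Hf$ on each block. Since $Hf$ is nondecreasing and Hölder gives $\int_{\Delta_j}f\le\|f\|_{p,v,\Delta_j}\|v^{-1}\|_{p',\Delta_j}\ls 2^j b_j$ with $b_j:=\|f\|_{p,v,\Delta_j}$, I obtain $(Hf)(t)\ls c_k:=\sum_{j\le k}2^j b_j$ for $t\in\Delta_k$, where $\sum_j b_j^p=\|f\|_{p,v,\I}^p$. Writing $\beta_k:=\int_{\Delta_k}w^q=W(t_k)^q-W(t_{k+1})^q$, this yields
\[
\|Hf\|_{q,w,\I}^q\ls\sum_k\beta_k\,c_k^q.
\]
In case (i) I use $W(t_k)\le A(p,q)2^{-k}$, so $\beta_k\le A(p,q)^q2^{-kq}$ and $\sum_k\beta_k c_k^q\ls A(p,q)^q\sum_k(2^{-k}c_k)^q$; but $2^{-k}c_k=\sum_{m\ge0}2^{-m}b_{k-m}$ is the convolution of $(b_j)$ with a summable geometric sequence, so Young's inequality followed by $\ell^p\hookrightarrow\ell^q$ (valid because $p\le q$) bounds this by $\ls A(p,q)^q\|f\|_{p,v,\I}^q$. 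In case (ii) I keep $\beta_k$ and write $c_k=2^k d_k$ with $d_k:=2^{-k}c_k$; Hölder in $k$ with the conjugate pair $p/q$ and $r/q$ (legitimate since $q/p+q/r=1$, as $1/r=1/q-1/p$) gives
\[
\sum_k\beta_k\,2^{kq}d_k^q\le\Big(\sum_k\beta_k^{r/q}2^{kr}\Big)^{q/r}\Big(\sum_k d_k^p\Big)^{q/p}.
\]
The second factor is $\ls\|f\|_{p,v,\I}^q$ again by Young and $\sum_j b_j^p=\|f\|_{p,v,\I}^p$; for the first I invoke the elementary inequality $(b^q-a^q)^{r/q}\le b^r-a^r$ for $0\le a\le b$ and $r\ge q$ (proved by a one-variable monotonicity check after normalizing $b=1$), giving $\beta_k^{r/q}\le W(t_k)^r-W(t_{k+1})^r$, so that $\sum_k\beta_k^{r/q}2^{kr}\le\sum_k 2^{kr}\big(W(t_k)^r-W(t_{k+1})^r\big)\ap\int_0^\i V^r\,d(-W^r)=A(p,q)^r$.

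The principal obstacle is the sufficiency half in case (ii): matching the discrete quantity $\sum_k 2^{kr}\big(W(t_k)^r-W(t_{k+1})^r\big)$ to the Stieltjes integral $\int_0^\i V^r\,d(-W^r)$ requires a careful Abel summation using $V\ap 2^k$ on $\Delta_k$, and the whole estimate hinges on the elementary convexity-type inequality above, which is exactly the device that converts the $L^q$-increments of $W$ into the $L^r$-increments appearing in $A(p,q)$. A secondary but unavoidable nuisance is the treatment of the degenerate endpoint cases where $V$ or $W$ fails to be finite, continuous, or strictly monotone, which I would dispatch by the standing conventions rather than by separate computation. Because all these steps carry fixed multiplicative losses, the outcome is the stated equivalence $B(p,q)\ap A(p,q)$ rather than an identity.
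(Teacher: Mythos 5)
First, a point of comparison: the paper does not prove Theorem~\ref{HardyIneq} at all --- it is recalled as the classical two-weight Hardy characterization and referred to the literature (\cite{ok}, \cite{kp}, \cite{kufmalpers}) --- so your argument can only be judged on its own merits. The sufficiency half is the standard blocking proof and is sound in both regimes: the bound $(Hf)(t)\ls c_k=\sum_{j\le k}2^jb_j$ on $\Delta_k$, Young's inequality for the convolution with the summable sequence $(2^{-m})_{m\ge0}$ (legitimate since $p\ge1$), the embedding $\ell^p\hra\ell^q$ for $p\le q$, discrete H\"older with the conjugate pair $p/q$, $r/q$ for $q<p$, and the elementary inequality $(b^q-a^q)^{r/q}\le b^r-a^r$ (equivalent to $(1-s)^{\rho}+s^{\rho}\le1$ for $\rho=r/q\ge1$) all check out; and since $W^q$ is absolutely continuous where finite, the comparison $\sum_k2^{kr}\big(W(t_k)^r-W(t_{k+1})^r\big)\le\int_0^{\i}V^r\,d(-W^r)$ follows at once from $V\ge2^k$ on $\Delta_k$, with no delicate Abel summation needed in that direction. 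The necessity computation in case (i), testing with $v^{-p'}\chi_{(0,t)}$, is also correct.

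The genuine gap is the necessity half of case (ii). The test function you name, $f=v^{-p'}(V^{p'})^{r/p'-1}=v^{-p'}V^{r-p'}$, does give $(Hf)(t)=\tfrac{p'}{r}V(t)^r$, but $\|f\|_{p,v}^p=\int v^{-p'}V^{p(r-p')}\,dx$ integrates $V^{p(r-1)-p'}\,d(V^{p'})$ and therefore diverges at the origin whenever $r\le1$ --- which occurs inside the admissible range (e.g.\ $p=2$, $q=1/2$ gives $r=2/3$). Even when $r>1$, the resulting lower bound for $B(p,q)$ is $\sup_T\big(\int_0^TV^{rq}\,d(-W^q)\big)^{1/q}V(T)^{1-r}$, which is a \emph{different} functional from $\big(\int_0^{\i}V^r\,d(-W^r)\big)^{1/r}$; their equivalence is itself a nontrivial theorem of the ``scales of equivalent conditions'' type (cf.\ \cite{gogperstepwall}) and cannot be dispatched by ``integration by parts against $d\s$''. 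A correct necessity argument here is genuinely separate from case (i): either duality (available only for $q\ge1$), or the discrete saturation argument --- test with $f=\sum_k\a_kv^{-p'}\chi_{\Delta_k}$, obtain $B^q\ge\sum_k\a_k^q\s_k^q\b_{k+1}\big/\big(\sum_k\a_k^p\s_k\big)^{q/p}$ with $\s_k=\int_{\Delta_k}v^{-p'}$ and $\b_k=\int_{\Delta_k}w^q$, choose $(\a_k)$ to saturate the converse H\"older inequality in the $\ell^{p/q}$--$\ell^{r/q}$ pairing to get $B^r\gs\sum_k\s_k^{r/p'}\b_{k+1}^{r/q}$, and then recover $\int_0^{\i}V^r\,d(-W^r)$ from this discrete sum by a summation-by-parts argument (this last step is exactly where the work lies, since the pointwise inequality $(b^q-a^q)^{r/q}\le b^r-a^r$ goes the wrong way for necessity). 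Without some version of this, case (ii) is only half proved.
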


\begin{thm}\label{CopsonIneq}
Let $1 \le p \le \i$, $0 < q \le \i$, $v,\,w \in \mp^+\I$.
Then inequality \eqref{eq.2.200}
holds for all $f \in \mp^+\I$ if and only if $A^*(p,q) < \i$, and
the best constant in \eqref{eq.2.200}, that is,
\begin{equation*}
B^*(p,q) : = \sup_{f\in \mp^+\I} \|H^*f\|_{q,w,\I} /
\|f\|_{p,v,\I}
\end{equation*}
satisfies $B^*(p,q) \ap A^*(p,q)$. Here

{\rm (i)} for $p \le q$,
$$
A^*(p,q): = \sup_{t \in \I} \big\|v^{-1}\big\|_{p',(t,\infty)} \|w\|_{q,(0,t)} \,;
$$

{\rm (ii)} for $q < p$  and $\frac{1}{r} = \frac{1}{q} -  \frac{1}{p}$,
$$
A^*(p,q) : = \bigg(\int_0^{\infty} \big\|v^{-1}\big\|_{p',(t,\infty)}^{r} d \bigg(\|w\|_{q,(0,t)}^r\bigg)\bigg)^{\frac{1}{r}}.
$$
\end{thm}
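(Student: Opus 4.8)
The plan is to deduce Theorem~\ref{CopsonIneq} from the already available Theorem~\ref{HardyIneq} by means of the reflection $x \mapsto 1/x$, which conjugates the operator $H^*$ to the operator $H$. First I would associate to each $f \in \mp^+\I$ the function $g \in \mp^+\I$ defined by $g(s) := f(1/s)\,s^{-2}$; the substitution $x = 1/s$ gives
$$
(H^*f)(1/t) = \int_{1/t}^{\i} f(x)\,dx = \int_0^t f(1/s)\,s^{-2}\,ds = (Hg)(t), \qquad t > 0.
$$
Since $f \mapsto g$ is an involutive bijection of $\mp^+\I$ onto itself (one recovers $f(x) = g(1/x)\,x^{-2}$), it suffices to track how the two quasi-norms transform under this correspondence.

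Next I would substitute $t = 1/\tau$ in $\|H^*f\|_{q,w,\I}$ and $x = 1/s$ in $\|f\|_{p,v,\I}$. I expect to obtain
$$
\|H^*f\|_{q,w,\I} = \|Hg\|_{q,\widetilde{w},\I}, \qquad \|f\|_{p,v,\I} = \|g\|_{p,\widetilde{v},\I},
$$
with the reflected weights $\widetilde{w}(\tau) := w(1/\tau)\,\tau^{-2/q}$ and $\widetilde{v}(s) := v(1/s)\,s^{2-2/p}$; the endpoint cases $p = \i$ or $q = \i$ are handled by the analogous transformation of suprema, exactly as in the $p = \i$ computation of Lemma~\ref{Cespp}. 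It follows that the best constant $B^*(p,q)$ in \eqref{eq.2.200} for the pair $(v,w)$ equals the best constant $B(p,q)$ in \eqref{eq.2.2} for the reflected pair $(\widetilde{v},\widetilde{w})$. Theorem~\ref{HardyIneq} then applies verbatim and yields $B^*(p,q) \ap A(p,q)[\widetilde{v},\widetilde{w}]$, where I write $A(p,q)[\widetilde{v},\widetilde{w}]$ for the Hardy functional computed with the reflected weights.

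It remains to identify $A(p,q)[\widetilde{v},\widetilde{w}]$ with $A^*(p,q)[v,w]$. Substituting $\sigma = 1/s$ and using the identity $(2/p)\,p' = 2p' - 2$ (which comes from $p'/p = p' - 1$), the power of the variable cancels completely and one is left with
$$
\big\|\widetilde{v}^{-1}\big\|_{p',(0,t)} = \big\|v^{-1}\big\|_{p',(1/t,\i)}, \qquad \|\widetilde{w}\|_{q,(t,\i)} = \|w\|_{q,(0,1/t)}.
$$
In case $p \le q$, taking the supremum over $t \in \I$ and then replacing $t$ by $1/t$ reproduces $A^*(p,q)$ exactly. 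In case $q < p$ the same two identities, inserted into the Lebesgue--Stieltjes integral defining $A(p,q)$, turn $d\big(-\|w\|_{q,(t,\i)}^r\big)$ into $d\big(\|w\|_{q,(0,t)}^r\big)$ after the change of variable, again reproducing $A^*(p,q)$.

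The genuinely routine part is the accounting of Jacobian factors and exponents; the one place that demands care is the exact cancellation of the power of the variable in $\widetilde{v}$ and $\widetilde{w}$, so that the reflected Hardy functional reproduces $A^*$ with no stray power of $t$. In the subdiagonal case $q < p$ one must additionally check that the monotone function whose differential appears is transported with the correct orientation and sign, so that a non-negative Stieltjes measure is carried to a non-negative one. Once these verifications are in place, the statement is an immediate consequence of Theorem~\ref{HardyIneq}.
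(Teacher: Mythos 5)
Your proposal is correct, but it does not follow the paper's route for the simple reason that the paper offers none: Theorem~\ref{CopsonIneq} is stated in Section~\ref{s3} as a recalled, classical characterization (with the references \cite{ok}, \cite{kp}, \cite{kufmalpers} given for this circle of results), so no proof appears in the text. Your reflection argument is a legitimate and self-contained way to obtain it from Theorem~\ref{HardyIneq}, and the computations check out: with $g(s)=f(1/s)s^{-2}$ one has $(H^*f)(1/t)=(Hg)(t)$; the substitutions give $\widetilde w(\tau)=w(1/\tau)\tau^{-2/q}$ and $\widetilde v(s)=v(1/s)s^{2-2/p}$; and the exponent $(2-2/p)p'-2$ vanishes because $p'/p=p'-1$, so that $\big\|\widetilde v^{-1}\big\|_{p',(0,t)}=\big\|v^{-1}\big\|_{p',(1/t,\i)}$ and $\|\widetilde w\|_{q,(t,\i)}=\|w\|_{q,(0,1/t)}$ with no stray power of $t$. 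In case (ii) the pushforward of the nonnegative Stieltjes measure $d\big(-\|\widetilde w\|_{q,(t,\i)}^{r}\big)$ under $t\mapsto 1/t$ is indeed $d\big(\|w\|_{q,(0,\s)}^{r}\big)$ (note $q<p$ forces $q<\i$, so the distribution function is continuous and boundary conventions are immaterial), and the endpoint conventions $p'=\i$ for $p=1$ and $p'=1$ for $p=\i$ from Convention~\ref{Notat.and.prelim.conv.1.1} are respected by the cancellation. What your approach buys is economy: one of the two dual Hardy characterizations suffices, and the other follows by a change of variables --- the same device the authors themselves invoke in the Introduction to reduce \eqref{mainemb2} to \eqref{mainemb1}.
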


\begin{thm}\label{HardyforSupremal}
Let $0 < q < \i$, $v,\,w \in \mp^+\I$. Denote by
$$
(Sf) (t) : = \esup_{x \in (0,t)} f(x),  \qq f \in \mp^+\I,\qq t \ge
0.
$$
Then the inequality
\begin{equation*}
\|Sf\|_{q,w,\I}\leq c \|f\|_{\i,v,\I}
\end{equation*}
holds for all $f \in \mp^+\I$ if and only if
$$
\bigg(\int_0^{\infty} \big\|v^{-1}\big\|_{\infty,(0,t)}^q d \bigg(- \|w\|_{q,(t,\i)}^q\bigg)\bigg)^{\frac{1}{q}} <\i,
$$
and
\begin{equation*}
\sup_{f\in \mp^+\I} \|Sf\|_{q,w,\I} /
\|f\|_{\i,v,\I} \ap \bigg(\int_0^{\infty} \big\|v^{-1}\big\|_{\infty,(0,t)}^q d \bigg(- \|w\|_{q,(t,\i)}^q\bigg)\bigg)^{\frac{1}{q}}.
\end{equation*}
\end{thm}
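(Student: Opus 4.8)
The plan is to show that the best constant in the inequality, namely $C := \sup_{f} \|Sf\|_{q,w,\I}/\|f\|_{\i,v,\I}$, is in fact \emph{equal} to the right-hand quantity, which a fortiori yields the equivalence $\ap$ and the characterization of boundedness. Throughout write $V(t) := \|v^{-1}\|_{\i,(0,t)}$ (non-decreasing in $t$) and $W(t) := \|w\|_{q,(t,\i)}^q = \int_t^{\i} w(s)^q\,ds$ (non-increasing), so that $d(-W)$ is a non-negative Lebesgue--Stieltjes measure on $\I$; whenever $\|w\|_{q,(t,\i)}<\i$ for all $t>0$ one has $d(-W(t)) = w(t)^q\,dt$, whence
\[
L^q := \int_0^{\i} V(t)^q\,d(-W(t)) = \int_0^{\i} V(t)^q w(t)^q\,dt .
\]

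First I would establish the upper bound $C \le L$ (this gives sufficiency). Fix $f \in \mp^+\I$ and set $\lambda := \|f\|_{\i,v,\I}$. From $f(x)v(x) \le \lambda$ for a.e.\ $x$ we get $f(x) \le \lambda\,v(x)^{-1}$ a.e., and taking essential suprema over $(0,t)$,
\[
(Sf)(t) = \esup_{x\in(0,t)} f(x) \le \lambda \esup_{x\in(0,t)} v(x)^{-1} = \lambda\,V(t), \qquad t>0 .
\]
Raising to the power $q$ and integrating gives $\|Sf\|_{q,w,\I}^q \le \lambda^q \int_0^{\i} V^q w^q\,dt = \lambda^q L^q$, i.e.\ $\|Sf\|_{q,w,\I} \le L\,\|f\|_{\i,v,\I}$. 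Hence $C \le L$, and in particular $L < \i$ forces the inequality.

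Next I would establish $C \ge L$ (this gives necessity) by exhibiting an extremal function. The natural candidate is $f_0 := v^{-1}$: then $\|f_0\|_{\i,v,\I} = \esup_x v(x)^{-1}v(x) = 1$, while $(Sf_0)(t) = \esup_{x\in(0,t)} v(x)^{-1} = V(t)$ for every $t$, so that $\|Sf_0\|_{q,w,\I}^q = \int_0^{\i} V^q w^q\,dt = L^q$. Thus $C \ge L$, and if the inequality holds with constant $c$ then $L \le c < \i$. Together with the previous step this gives $C = L$, which proves both the characterization and the claimed equivalence.

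The only real difficulty is measure-theoretic bookkeeping in degenerate situations, namely when $v^{-1}$ is not locally bounded (so that $V \equiv \i$ on part of $\I$) or when $\{v=0\}$ carries positive measure. To make the extremal step rigorous in full generality I would replace $f_0$ by the truncations $f_n := \min\{v^{-1},n\}\,\chi_{(1/n,\,n)}$, for which $\|f_n\|_{\i,v,\I} \le 1$ and $(Sf_n)(t) \uparrow V(t)$ for every $t$; monotone convergence then yields $\|Sf_n\|_{q,w,\I} \to L$, so the bound $C \ge L$ persists. The remaining routine points are the essential-supremum identity $(Sf_0)(t) = V(t)$ and the identification $\int_0^{\i} V^q\,d(-W) = \int_0^{\i} V^q w^q\,dt$, which is valid since $\|w\|_{q,(t,\i)} < \i$ for every $t>0$ makes $W$ locally absolutely continuous with $-W' = w^q$ a.e.
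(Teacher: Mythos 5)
The paper states Theorem~\ref{HardyforSupremal} without proof --- it is recalled in Section~\ref{s3} as a known characterization --- so there is no internal argument to compare against. Your proof is correct and self-contained, and in fact it gives more than the stated result: writing $V(t)=\|v^{-1}\|_{\infty,(0,t)}$, the best constant \emph{equals} $\big(\int_0^{\infty}V(t)^q\,d(-\|w\|_{q,(t,\infty)}^q)\big)^{1/q}$, not merely up to equivalence. The two halves are the natural ones for a supremal operator acting from weighted $L_\infty$: the pointwise majorization $(Sf)(t)\le\|f\|_{\infty,v,\I}\,V(t)$ gives the upper bound, and the explicit extremizer $f_0=v^{-1}$ (with $\|f_0\|_{\infty,v,\I}=1$ and $Sf_0=V$) gives the lower bound. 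The only points requiring the care you flag are genuinely degenerate: the identification $\int_0^\infty V^q\,d(-\|w\|_{q,(\cdot,\infty)}^q)=\int_0^\infty V^q w^q\,dt$ needs $\|w\|_{q,(t,\infty)}<\infty$ for all $t>0$ (otherwise one must invoke the Stieltjes-integral conventions stated before Theorem~\ref{ReverseHardyIneq}, and in that excluded case both sides of the characterization are simultaneously infinite, as one sees by testing with a bounded characteristic function supported where $v^{-1}>0$); and on a set where $v=0$ with positive measure the majorization $f\le\|f\|_{\infty,v,\I}\,v^{-1}$ can fail when $\|f\|_{\infty,v,\I}=0$, but there again $V\equiv\infty$ downstream and the constant is infinite on both sides. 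Your truncation $f_n=\min\{v^{-1},n\}\chi_{(1/n,n)}$ with $(Sf_n)(t)\uparrow V(t)$ and monotone convergence correctly handles the case $V\equiv\infty$ in the lower bound. In short: a valid, arguably sharper, argument for a result the paper only quotes.
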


\begin{thm}\label{CopsonforSupremal}
Let $0 < q < \i$, $v,\,w \in \mp^+\I$. Denote by
$$
(S^*f) (t) : = \esup_{x \in (t,\i)} f(x),  \qq f \in \mp^+\I,\qq t
\ge 0.
$$
Then the inequality
\begin{equation*}
\|S^*f\|_{q,w,\I}\leq c \|f\|_{\i,v,\I}
\end{equation*}
holds for all $f \in \mp^+\I$ if and only if
$$
\bigg(\int_0^{\infty} \big\|v^{-1}\big\|_{\infty,(t,\infty)}^q d \bigg(\|w\|_{q,(0,t)}^q\bigg)\bigg)^{\frac{1}{q}} < \i,
$$
and
\begin{equation*}
\sup_{f\in \mp^+\I} \|S^*f\|_{q,w,\I} /
\|f\|_{\i,v,\I} \ap \bigg(\int_0^{\infty} \big\|v^{-1}\big\|_{\infty,(t,\infty)}^q d \bigg(\|w\|_{q,(0,t)}^q\bigg)\bigg)^{\frac{1}{q}}.
\end{equation*}
\end{thm}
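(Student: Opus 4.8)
The plan is to deduce the statement from Theorem~\ref{HardyforSupremal} by means of the order-reversing change of variables $x \mapsto 1/x$, which interchanges the operators $S$ and $S^*$ together with the roles of neighbourhoods of $0$ and of $\i$. To each $f \in \mp^+\I$ I would associate $\tilde f(y) := f(1/y)$ and introduce the transformed weights $\tilde{w}(s) := w(1/s)\,s^{-2/q}$ and $\tilde{v}(y) := v(1/y)$. The first step is to record the pointwise identity
\[
(S^* f)(1/s) = \esup_{x > 1/s} f(x) = \esup_{y \in (0,s)} f(1/y) = (S \tilde f)(s), \qquad s > 0,
\]
which follows by writing $x = 1/y$ and using that $x > 1/s \iff y < s$ for positive $x,y$.

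Next I would transfer both quasi-norms by the substitution $t = 1/s$ (respectively $x = 1/y$). Since $dt = s^{-2}\,ds$, the definition of $\tilde w$ yields $\|S^* f\|_{q,w,\I} = \|S\tilde f\|_{q,\tilde w,\I}$, while the essential-supremum norm transforms without any Jacobian, giving $\|f\|_{\i,v,\I} = \|\tilde f\|_{\i,\tilde v,\I}$. Consequently the inequality $\|S^*f\|_{q,w,\I}\le c\|f\|_{\i,v,\I}$ is equivalent, with the very same constant $c$, to $\|S\tilde f\|_{q,\tilde w,\I}\le c\|\tilde f\|_{\i,\tilde v,\I}$. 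As $f \mapsto \tilde f$ is a bijection of $\mp^+\I$ onto itself, the two best constants coincide, and Theorem~\ref{HardyforSupremal} applied to the pair $(\tilde w,\tilde v)$ shows that finiteness of
\[
\bigg(\int_0^{\infty} \big\|\tilde v^{-1}\big\|_{\infty,(0,t)}^q \, d\big(- \|\tilde w\|_{q,(t,\i)}^q\big)\bigg)^{\frac{1}{q}}
\]
is necessary and sufficient, with this quantity equivalent to the best constant.

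It remains to rewrite this criterion in terms of the original weights, and this is the step demanding the most care. A direct computation with $x = 1/y$ gives $\|\tilde v^{-1}\|_{\infty,(0,t)} = \|v^{-1}\|_{\infty,(1/t,\i)}$, and the substitution $s = 1/x$ in the definition of $\tilde w$ gives $\|\tilde w\|_{q,(t,\i)}^q = \int_t^\i w(1/s)^q s^{-2}\,ds = \|w\|_{q,(0,1/t)}^q$. Performing the order-reversing substitution $\tau = 1/t$ in the Lebesgue--Stieltjes integral then turns the displayed expression into $\int_0^\i \|v^{-1}\|_{\infty,(\tau,\i)}^q \, d\big(\|w\|_{q,(0,\tau)}^q\big)$, which is exactly the stated criterion. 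The main obstacle is precisely this last bookkeeping: because $\tau = 1/t$ reverses orientation, and because $t \mapsto \|w\|_{q,(0,1/t)}^q$ is nonincreasing whereas $\tau \mapsto \|w\|_{q,(0,\tau)}^q$ is nondecreasing, one must verify that the positive measure $d\big(-\|\tilde w\|_{q,(t,\i)}^q\big)$ pushes forward under $t \mapsto 1/t$ exactly to $d\big(\|w\|_{q,(0,\tau)}^q\big)$, so that no spurious sign or endpoint term is introduced; once this is checked, the equivalence of the constants is immediate.
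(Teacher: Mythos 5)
Your argument is correct. The paper itself states Theorem~\ref{CopsonforSupremal} without proof (it is recalled alongside Theorem~\ref{HardyforSupremal} as known material), so there is no internal proof to compare against; but your reduction via the inversion $x\mapsto 1/x$ is sound and is exactly the device the paper uses in the introduction to pass between \eqref{mainemb1} and \eqref{mainemb2}, with the same Jacobian-adjusted weight $\tilde w(s)=w(1/s)s^{-2/q}$. The identities $(S^*f)(1/s)=(S\tilde f)(s)$, $\|S^*f\|_{q,w,\I}=\|S\tilde f\|_{q,\tilde w,\I}$, $\|f\|_{\i,v,\I}=\|\tilde f\|_{\i,\tilde v,\I}$, $\|\tilde v^{-1}\|_{\infty,(0,t)}=\|v^{-1}\|_{\infty,(1/t,\i)}$ and $\|\tilde w\|_{q,(t,\i)}^q=\|w\|_{q,(0,1/t)}^q$ all check out. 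The one step you flag but do not fully verify, the pushforward of $d\big(-\|\tilde w\|_{q,(t,\i)}^q\big)$ under $t\mapsto 1/t$ onto $d\big(\|w\|_{q,(0,\tau)}^q\big)$, does go through without spurious endpoint terms: for $0<a<b$ one has $-\|\tilde w\|_{q,(b,\i)}^q+\|\tilde w\|_{q,(a,\i)}^q=\|w\|_{q,(0,1/a)}^q-\|w\|_{q,(0,1/b)}^q$, which is the $d\big(\|w\|_{q,(0,\tau)}^q\big)$-measure of the image interval, and since $\tau\mapsto\|w\|_{q,(0,\tau)}^q$ is continuous (being an integral of a nonnegative function, at least where finite, with the degenerate infinite case handled by Conventions~\ref{conv:3.3} and the one following it), the orientation reversal introduces no atoms or sign errors.
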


For the convenience of the reader we repeat the relevant material
from \cite{ego2008} and \cite{mu2015} without proofs, thus making our exposition
self-contained.
Let $\vp$ be a non-decreasing and finite function on the interval $I
: = (a,b)\subseteq \R$. We assign to $\vp$ the function $\la$
defined on subintervals of $I$ by
\begin{align}
\la ([y,z]) & = \vp(z+) - \vp(y-), \notag\\
\la ([y,z)) & = \vp(z-) - \vp(y-), \label{Notat.and.prelim.eq.1.4}\\
\la ((y,z]) & = \vp(z+) - \vp(y+), \notag\\
\la ((y,z)) & = \vp(z-) - \vp(y+). \notag
\end{align}
The function $\la$ is a non-negative, additive and regular
function of intervals. Thus (cf. \cite{r}, Chapter 10), it admits
a unique extension to a non-negative Borel measure $\la$ on $I$.

Note also that the associated Borel measure can be determined,
e.g., only by putting
$$
\la ([y,z]) = \vp(z+) - \vp(y-) \qq \mbox{for any}\qq [y,z]\subset
I
$$
(since the Borel subsets of $I$ can be generated by subintervals
$[y,z]\subset I$).

If $J\subseteq I$, then the Lebesgue-Stieltjes integral $\int_J
f\,d\vp$ is defined as $\int_J f\,d\la$. We shall also use the
Lebesgue-Stieltjes integral $\int_J f\,d\vp$ when $\vp$ is a
non-increasing and finite on the interval $I$. In such a case we
put
$$
\int_J f\,d\vp : = - \int_J f\,d(-\vp).
$$

We adopt the following conventions.

\begin{conv} \label{conv:3.3}
Let $I=(a,b)\subseteq \R$, $f:I\to [0,\infty]$ and $h:I\to
[0,\i]$. Assume that $h$ is non-decreasing and left-continuous on
$I$. If $h:I\to [0,\infty)$, then the symbol $\int_{I}f\,dh$ means
the usual Lebesgue-Stieltjes integral (with the measure $\la$
associated to $h$ is given by $\la([\a,\b))= h(\b)-h(\a)$ if $[\a,
\b)\subset (a,b)$ -- cf. \eqref{Notat.and.prelim.eq.1.4}).
However, if $h = \infty$ on some subinterval $(c,b)$  with $c\in
I$, then we define $\int_{I}f\,dh$ only if $f=0$ on $[c,b)$ and we
put
$$
\int_{I}f\,dh=\int_{(a,c)}f\,dh.
$$
\end{conv}

\begin{conv}
Let $I=(a,b)\subseteq \R$, $f:I\to [0,+\infty]$ and $h:I\to [-\infty,0]$. Assume that $h$ is non-decreasing and right-continuous on $I$. If $h:I\to (-\infty,0]$, then the symbol $\int_{I}f\,dh$ means the usual Lebesgue-Stieltjes integral.
However, if $h= -\infty$ on some subinterval $(a,c)$  with $c\in I$, then we define  $\int_{I}f\,dh$ only if $f=0$ on $(a,c]$ and we put
$$
\int_{I}f\,dh=\int_{(c,b)}f\,dh.
$$
\end{conv}

\begin{thm}\cite[Theorems 5.1 and 5.4]{ego2008}\label{ReverseHardyIneq}
Let  $w \in \mp^+\I$ and $u \in \mp^+\I$ be such that $\|u\|_{q,(t,\i)} <\infty$ for all $t\in (0,\i)$.

{\rm (i)} Assume that $0< q\le p\le1$. Then
\begin{equation} \label{5.1}
\Vert g\Vert _{p,w,\I}\leq c \Vert Hg\Vert _{q,u,\I}
\end{equation}
holds for all $g \in \mp^+\I$ if and only if
\begin{equation} \label{5.2}
C(p,q) : =\sup _{t\in (0,\i)}\| w \|_{{p'},(t,\i)} \|u\|_{q,(t,\i)}^{-1} <\infty.
\end{equation}
The best possible constant in \eqref{5.1}, that is,
\begin{equation*}
D(p,q) : = \sup _{g \in \mp^+\I}\| g \|_{p,w,\I} / \| Hg\|_{q,u,\I}
\end{equation*}
satisfies $D(p,q)\approx C(p,q)$.

{\rm (ii)} Let $0< p\le1$, $p<q\le\infty$ and $\frac{1}{r} = \frac{1}{p} - \frac{1}{q}$. Then \eqref{5.1}  holds if and only if
$$
C(p,q) : =\bigg( \int_{\I} \| w \|_{{p'},(t,\i)}^{r}\,d\bigg(\|u\|_{q,(t-,\i) }^{-r}\bigg) \bigg) ^{\frac{1}{r}}+\frac{\|w \|_{{p'},\I}} {\|u\|_{q,\I}}<\infty,
$$
and $D(p,q)\approx C(p,q)$.

\end{thm}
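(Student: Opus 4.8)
The plan is to treat necessity and sufficiency separately and, in both directions, to exploit that $Hg$ is non-decreasing whenever $g\in\mp^+\I$. Writing $f:=Hg$, the single most useful elementary fact is the monotonicity bound
\[
f(\tau)\,\|u\|_{q,(\tau,\i)}\le \|Hg\|_{q,u,\I}\qquad(\tau>0),
\]
which follows at once from $f(t)\ge f(\tau)$ for $t\ge\tau$ and the definition of $\|\cdot\|_{q,u,(\tau,\i)}$ (with the integral replaced by the essential supremum when $q=\i$). I would establish this first, since it converts information about $Hg$ into a pointwise majorant for $f$ and is precisely what feeds the weight $\|u\|_{q,(\tau,\i)}$ into the later estimates.

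For necessity I would test \eqref{5.1} against an explicit family. Fix $t>0$ and, in case (i), take $g_t:=\|w\|_{p',(t,\i)}^{-p'}\,w^{p'}\,\chi_{(t,\i)}$. A direct computation using $p(p'+1)=p'$ and $p'(1-p)=p$ gives $\|g_t\|_{p,w,\I}=\|w\|_{p',(t,\i)}$, while $\supp Hg_t\subseteq(t,\i)$ together with $Hg_t\le\int_t^\i g_t=1$ yields $\|Hg_t\|_{q,u,\I}\le\|u\|_{q,(t,\i)}$. Substituting into \eqref{5.1} and taking the supremum over $t$ produces $C(p,q)\le D(p,q)$, the nontrivial half of (i). For case (ii) the same building blocks are superposed: one tests against a combination $g=\sum_k c_k g_{t_k}$ (or an integral thereof) over a partition adapted to the Lebesgue--Stieltjes measure $d(\|u\|_{q,(t-,\i)}^{-r})$ and optimises the coefficients $c_k$, the extremal choice being governed by the conjugacy $1/r=1/p-1/q$. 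This is where the integral term of $C(p,q)$, and (from a limiting endpoint test) the boundary term $\|w\|_{p',\I}/\|u\|_{q,\I}$, emerge.

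For sufficiency in case (i) I would discretise by the level sets of $f$. Assuming (after a routine truncation and monotone-convergence reduction) that $f$ is finite, continuous and exhausts $\I$, choose $t_k$ with $f(t_k)=2^k$, so $\int_{t_k}^{t_{k+1}}g=2^k$. On each block, Hölder's inequality with exponents $1/p$ and $1/(1-p)$ gives
\[
\int_{t_k}^{t_{k+1}} g^p w^p
\le \Big(\int_{t_k}^{t_{k+1}} g\Big)^{p}\Big(\int_{t_k}^{t_{k+1}} w^{p'}\Big)^{1-p}
\le 2^{kp}\,\|w\|_{p',(t_k,\i)}^{p'(1-p)},
\]
and $p'(1-p)=p$ together with the hypothesis $\|w\|_{p',(t_k,\i)}\le C(p,q)\|u\|_{q,(t_k,\i)}$ turns the right-hand side into $C(p,q)^{p}\big(2^k\|u\|_{q,(t_k,\i)}\big)^{p}$. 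Setting $b_k:=2^{kq}\|u\|_{q,(t_k,\i)}^{q}$, summation reduces the claim to the elementary embedding $\|b\|_{\ell^{p/q}}\le\|b\|_{\ell^{1}}$, valid because $p/q\ge1$; since $\sum_k b_k\ap \|Hg\|_{q,u,\I}^{q}$ (by Abel summation, using the monotonicity bound together with the companion upper estimate $f<2^{k+1}$ on each block), this yields $\|g\|_{p,w,\I}\le c\,C(p,q)\,\|Hg\|_{q,u,\I}$, i.e. $D(p,q)\ls C(p,q)$.

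Case (ii) runs along the same discretisation, but now $p/q<1$, so the trivial embedding $\ell^{1}\hra\ell^{p/q}$ is unavailable and must be replaced by a genuine discrete Hardy inequality; the exact weight condition under which that discrete inequality holds is precisely the discretised form of the integral term $\big(\int_\I\|w\|_{p',(t,\i)}^r\,d(\|u\|_{q,(t-,\i)}^{-r})\big)^{1/r}$, with the endpoint contribution accounted for by $\|w\|_{p',\I}/\|u\|_{q,\I}$. I expect the main obstacle to lie here rather than in (i): one must (a) justify the discretisation rigorously for arbitrary $g\in\mp^+\I$, including the cases where $f$ is bounded, vanishes on an initial interval, or is not absolutely continuous, and (b) pass between the discrete $\ell^r$-condition and the continuous Lebesgue--Stieltjes integral defining $C(p,q)$, handling the left limits $\|u\|_{q,(t-,\i)}$ and the conventions for $\int_I f\,dh$ fixed before the statement. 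These measure-theoretic passages, rather than the inequalities themselves, are the delicate part.
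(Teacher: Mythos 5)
The paper offers no proof of this statement to compare against: Theorem~\ref{ReverseHardyIneq} is imported from \cite{ego2008} (Theorems 5.1 and 5.4), and Section~\ref{s3} says explicitly that the material from \cite{ego2008} and \cite{mu2015} is repeated ``without proofs''. Judged on its own, your outline for part (i) is essentially sound and follows the standard blocking-technique route: the monotonicity bound $f(\tau)\|u\|_{q,(\tau,\i)}\le\|Hg\|_{q,u,\I}$, the test functions $g_t=\|w\|_{p',(t,\i)}^{-p'}w^{p'}\chi_{(t,\i)}$ (your exponent arithmetic $p(p'+1)=p'$ and $p'(1-p)=p$ is correct), and the level-set decomposition $f(t_k)=2^k$ with H\"older on blocks followed by $\ell^{1}\hra\ell^{p/q}$ all work. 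What still needs writing out there is the endpoint $p=1$, where $p'=\i$ and your explicit $g_t$ is meaningless (one must test with near-maximisers of $\esup_{(t,\i)}w$ instead), plus the bookkeeping when $f$ is bounded or $\|w\|_{p',(t,\i)}=\i$.

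The genuine gap is part (ii), which is the harder and more distinctive half of the theorem and which you present only as a plan. Two key steps are missing. For necessity, ``superpose the $g_{t_k}$ and optimise the coefficients'' is not yet an argument: you must exhibit the $c_k$ that turn the resulting lower bound into the $\ell^{r}$ quantity, show that this discrete sum dominates the Lebesgue--Stieltjes integral $\int_{\I}\|w\|_{p',(t,\i)}^{r}\,d(\|u\|_{q,(t-,\i)}^{-r})$ with its left limits, and extract the boundary term $\|w\|_{p',\I}/\|u\|_{q,\I}$ from a separate limiting test. For sufficiency, after H\"older with exponents $q/p$ and $r/p$ the discretisation naturally produces $\sum_k\|w\|_{p',(t_k,t_{k+1})}^{r}\,U_k^{\ast}$ with $U_k^{\ast}=\big(\int_{t_k}^{t_{k+1}}u^{q}\big)^{-r/q}$, whereas the hypothesis controls sums of the form $\sum_k\|w\|_{p',(t_k,\i)}^{r}\big(\|u\|_{q,(t_{k+1},\i)}^{-r}-\|u\|_{q,(t_k,\i)}^{-r}\big)$; writing $U_k=\int_{t_k}^{\i}u^{q}$, the block quantities $(U_k-U_{k+1})^{-r/q}$ and $U_{k+1}^{-r/q}-U_k^{-r/q}$ are not comparable term by term, and bridging them is exactly the anti-discretisation step that carries the weight of the proof. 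Until those two passages are carried out, part (ii) is asserted rather than proven, and it is precisely the part responsible for the unusual form of $C(p,q)$ in that case.
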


\begin{thm}\cite[Theorems 4.1 and 4.4]{ego2008}\label{ReverseCopsonIneq}
Let  $w \in \mp^+\I$ and $u \in \mp^+\I$ be such that $\|u\|_{q,(0,t)} <\infty$ for all $t\in (0,\i)$.

{\rm (i)} Assume that $0< q\le p\le1$. Then
\begin{equation} \label{5.100}
\Vert g \Vert _{p,w,\I}\leq c \Vert H^*g\Vert_{q,u,(0,\i)}
\end{equation}
holds for all $g \in \mp^+\I$ if and only if
\begin{equation} \label{5.200}
C^*(p,q) : =\sup _{t\in (0,\i)}\| w \|_{{p'},(0,t)} \|u\|_{q,(0,t)}^{-1} <\infty .
\end{equation}
The best possible constant in \eqref{5.100}, that is,
\begin{equation*}
D^*(p,q) : = \sup _{g \in \mp^+\I}\| g \|_{p,w,\I} / \|H^*g\|_{q,u,\I}
\end{equation*}
satisfies $D^*(p,q)\approx C^*(p,q)$.

{\rm (ii)} Let $0< p\le1$, $p<q\le\infty$ and $\frac{1}{r} = \frac{1}{p} - \frac{1}{q}$. Then \eqref{5.100}  holds if and only if
$$
C^*(p,q) : =\bigg( \int_{\I} \| w \|_{p',(0,t)}^{r}\,d\bigg(-\|u\|_{q,(0,t+) }^{-r}\bigg) \bigg) ^{\frac{1}{r}}+\frac{\|w \|_{{p'},\I}} {\|u\|_{q,\I}}<\infty,
$$
and $D^*(p,q)\approx C^*(p,q)$.

\end{thm}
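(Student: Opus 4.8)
The plan is to establish the reverse Copson inequality \eqref{5.100} by the discretization/anti-discretization method, reducing it on dyadic blocks to ordinary H\"older inequalities (which are legitimate here because $1/p\ge 1$) together with the monotonicity of $H^*g$. Throughout I would set $G:=H^*g$, which is non-increasing with $G(\i)=0$, and write $U(t):=\|u\|_{q,(0,t)}$, a non-decreasing function that is finite for every $t\in\I$ by hypothesis. The basic local estimate, valid on any interval $E$ precisely because $1/p\ge 1$, is the H\"older bound
\begin{equation*}
\int_E g^pw^p \le \Big(\int_E g\Big)^p\,\|w\|_{p',E}^p ,
\end{equation*}
where the exponent $p'=p/(1-p)$ of Convention \ref{Notat.and.prelim.conv.1.1} is exactly the one for which $w^{p'}$ is the extremiser; this is the mechanism that forces the factor $\|w\|_{p',(0,t)}$ into $C^*(p,q)$.

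For the necessity in both parts I would test \eqref{5.100} with $g=w^{p'}\chi_{(0,t)}$. A direct computation gives $\|g\|_{p,w,\I}=\|w\|_{p',(0,t)}^{p'/p}$, while $G(s)=\int_s^t w^{p'}\le \|w\|_{p',(0,t)}^{p'}$ for $s<t$ and $G\equiv 0$ on $(t,\i)$, so that $\|H^*g\|_{q,u,\I}\le \|w\|_{p',(0,t)}^{p'}\,U(t)$. Substituting into \eqref{5.100} and using the identity $p'/p-p'=1$ yields $\|w\|_{p',(0,t)}\le c\,U(t)$ for every $t$, that is, $C^*(p,q)\le D^*(p,q)$ in the regime \eqref{5.200}. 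In the range of part~(ii) the same family, superposed over a geometric mesh, will produce the integral lower bound, and this superposition is the delicate point (see below).

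For the sufficiency in part~(i) ($0<q\le p\le 1$) I would fix a mesh $\{t_k\}$ with $U(t_k)=2^k$. On $(t_k,t_{k+1})$ the local H\"older bound, together with $\int_{t_k}^{t_{k+1}}g\le \int_{t_k}^{\i}g=G(t_k)$ and $\|w\|_{p',(0,t_{k+1})}\le C^*(p,q)\,U(t_{k+1})$, gives $\int_{t_k}^{t_{k+1}}g^pw^p\ls C^*(p,q)^p\,(G(t_k)2^{k})^p$. Summing and invoking the embedding $\ell^q\hookrightarrow\ell^p$ (valid precisely because $q\le p$) reduces the left-hand side to $C^*(p,q)^p\big(\sum_k(G(t_k)2^k)^q\big)^{p/q}$; finally, since $G$ is non-increasing, $\sum_k(G(t_{k+1})2^k)^q\ls \int_0^{\i}G^qu^q=\|H^*g\|_{q,u,\I}^q$, and a reindexing closes the estimate $\|g\|_{p,w,\I}\ls C^*(p,q)\,\|H^*g\|_{q,u,\I}$. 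Combined with the necessity this proves $D^*(p,q)\ap C^*(p,q)$.

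For part~(ii) ($p<q$) the same blockwise bound $\int_{t_k}^{t_{k+1}}g^pw^p\le G(t_k)^p\|w\|_{p',(0,t_{k+1})}^p$ is the starting point, but now $\ell^q\hookrightarrow\ell^p$ fails and must be replaced by H\"older applied to the sum with exponents $q/p$ and $r/p$, where $1/r=1/p-1/q$. This splits the sum into a factor controlled by $\|H^*g\|_{q,u,\I}^p$ and a factor $\big(\sum_k\|w\|_{p',(0,t_{k+1})}^{r}\,U(t_k)^{-r}\big)^{p/r}$, which is a Riemann--Stieltjes sum for $\int_{\I}\|w\|_{p',(0,t)}^r\,d\big(-U(t)^{-r}\big)$; the anti-discretization lemmas and the Lebesgue--Stieltjes conventions of Section \ref{s3} turn this sum into the integral defining $C^*(p,q)$, while the contribution of the largest blocks (when $U(\i)<\i$) produces the additive term $\|w\|_{p',\I}/\|u\|_{q,\I}$. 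I expect the main obstacle to lie here and in the matching necessity for part~(ii): one must build a single test function by superposing the block extremisers $w^{p'}\chi_{(t_k,t_{k+1})}$ with weights chosen to saturate the H\"older step, and then verify that the resulting discrete lower bound passes to the Stieltjes integral. Controlling the interaction of the tail term with this superposition, and ensuring the mesh can be selected (continuity and regularity of $U$), is the technically heaviest part of the argument.
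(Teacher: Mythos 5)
The paper offers no proof of this statement at all: it is quoted from Evans--Gogatishvili--Opic \cite{ego2008} (Theorems 4.1 and 4.4), and Section~\ref{s3} explicitly says the material from \cite{ego2008} is repeated ``without proofs''. So your argument must stand on its own. Part (i) of your plan is essentially a complete and correct proof: the reverse H\"older bound $\int_E g^pw^p\le(\int_E g)^p\|w\|_{p',E}^p$ is exactly the right local estimate for $0<p\le1$, the test function $g=w^{p'}\chi_{(0,t)}$ together with $p'/p-p'=1$ gives the necessity, and the mesh $U(t_k)=2^k$ combined with $\ell^q\hra\ell^p$ and the monotonicity of $G=H^*g$ gives the sufficiency. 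Two endpoint issues still need patching even in (i): for $p=1$ one has $p'=\infty$, so $w^{p'}$ is meaningless and the necessity must be tested with $g=\chi_{E_\ve}$ where $E_\ve=\{s\in(0,t):w(s)>\|w\|_{\infty,(0,t)}-\ve\}$; and when $\|w\|_{p',(0,t)}=\infty$, when $U$ has jumps, or when $U(0+)>0$ or $U(\infty)<\infty$, the test function and the mesh require the usual truncation and covering adjustments that you only gesture at.

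The genuine gap is in part (ii), where the two hardest steps are announced rather than performed. For the necessity, the superposition $g=\sum_k c_kw^{p'}\chi_{(t_k,t_{k+1})}$ does work, but extracting the integral condition requires three nontrivial steps: (a) the bound $\|H^*g\|_{q,u,\I}^q\ls\sum_k c_k^qA_k^q2^{kq}$ with $A_k=\int_{t_k}^{t_{k+1}}w^{p'}$, which rests on a discrete Hardy inequality for the tail sums $\sum_{j\ge k}c_jA_j$ against the geometric weights $2^{kq}$; (b) the sharp duality for the resulting weighted embedding of $\ell^q$ into $\ell^p$ (valid since $p<q$), yielding $\big(\sum_k\|w\|_{p',(t_k,t_{k+1})}^{r}2^{-kr}\big)^{1/r}\le c$ after the identity $p'(1-p)/p=1$ is used again; and (c) an anti-discretization replacing the local norms $\|w\|_{p',(t_k,t_{k+1})}$ by the cumulative ones $\|w\|_{p',(0,t)}$ before the sum can be identified with the Lebesgue--Stieltjes integral and the residual block with the term $\|w\|_{p',\I}/\|u\|_{q,\I}$. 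None of these is routine enough to leave implicit, and you yourself flag them as the heaviest part. In addition, part (ii) includes $q=\infty$, where $\|H^*g\|_{\infty,u,\I}$ is a supremum, $U(t)=\esup_{(0,t)}u$, $r=p$, the sum-to-integral passages change shape, and the right-limit $\|u\|_{q,(0,t+)}$ in the stated condition becomes essential (for $q<\infty$ it is redundant by Remark~\ref{R:5.5}); your sketch is written entirely for $q<\infty$. Until these points are carried out, part (ii) is a plausible programme rather than a proof.
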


\begin{rem} \label{R:5.5}
Let $q<\infty$ in Theorems~\ref{ReverseHardyIneq} and \ref{ReverseCopsonIneq}. Then
$$
\| u\|_{q,(t-,\i)}=\| u\|_{q,(t,\i)}\quad \mbox{and} \qq \|u\|_{q,(0,t+) } = \|u\|_{q,(0,t)} \qq\text{for all} \quad t\in \I,
$$
which implies that
$$
C(p,q) = \bigg( \int_{(0,\infty)} \Vert w \Vert _{{p'},(t,\i)}^{r} \,d\bigg(\Vert u\Vert _{q,(t,\i)}^{-r}\bigg) \bigg) ^{\frac1{r}}+\frac{\Vert w \Vert _{{p'},\I}} {\Vert u\Vert_{q,\I}},
$$
and
$$
C^*(p,q) : =\bigg( \int_{\I} \| w \|_{p',(0,t)}^{r}\,d\bigg(-\|u\|_{q,(0,t) }^{-r}\bigg) \bigg) ^{\frac{1}{r}} +\frac{\|w\|_{p',\I}} {\|u\|_{q,\I}}<\infty.
$$
\end{rem}

\begin{thm}\cite[Theorem 4.1]{mu2015}\label{ReverseHardySupremal}
Let  $w \in \mp^+\I$ and $u \in \mp^+\I$ be such that $\|u\|_{q,(t,\i)} <\infty$ for all $t\in (0,\i)$.

{\rm (i)} Assume that $0< q\le p\le \i$. Then
\begin{equation} \label{9.1}
\Vert g \Vert _{p,w,\I}\leq c \Vert Sg\Vert_{q,u,(0,\i)}
\end{equation}
holds for all $g \in \mp^+\I$ if and only if
\begin{equation} \label{9.2}
E(p,q) : = \sup_{t \in \I} \| w \|_{{p},(t,\i)} \|u\|_{q,(t,\i)}^{-1}  <\infty .
\end{equation}
The best possible constant in \eqref{9.1}, that is,
\begin{equation*}
F(p,q) : = \sup _{g \in \mp^+\I}\| g \|_{p,w,\I} / \| Sg\|_{q,u,\I}
\end{equation*}
satisfies $F(p,q)\approx E(p,q)$.\\

{\rm (ii)} Let $0 < p < q \le +\i$ and $\frac{1}{r} = \frac{1}{p} - \frac{1}{q}$. Then \eqref{9.1}  holds if and only if
$$
E(p,q) : =\bigg( \int_{\I} \| w \|_{{p},(t,\i)}^{r} \, d\bigg(\|u\|_{q,(t,\i)}^{-r} \bigg) \bigg)^{\frac{1}{r}} +\frac{\|w\|_{{p},\I}} {\|u\|_{q,\I}}<\infty,
$$
and $F(p,q)\approx E(p,q)$.

\end{thm}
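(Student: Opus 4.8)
\emph{The plan} is to strip the supremal operator off the right-hand side of \eqref{9.1} by reducing it to a \emph{restricted} inequality over non-decreasing functions, and then to characterise that restricted inequality in the two regimes. The decisive observation is that $Sg$ is always non-decreasing and $g\le Sg$ pointwise, while every non-negative non-decreasing $h$ satisfies $Sh=h$. Hence for arbitrary $g\in\mp^+\I$ one has $\|g\|_{p,w,\I}\le\|Sg\|_{p,w,\I}$ with $Sg$ a non-decreasing competitor, whereas testing on $g=h$ for non-decreasing $h$ gives the reverse. This yields the exact identity
\[
F(p,q)=\sup_{g\in\mp^+\I}\frac{\|g\|_{p,w,\I}}{\|Sg\|_{q,u,\I}}=\sup_{h\uparrow}\frac{\|h\|_{p,w,\I}}{\|h\|_{q,u,\I}},
\]
the inner supremum running over non-negative non-decreasing $h$. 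Thus the whole statement becomes the characterisation of $\|h\|_{p,w,\I}\le c\,\|h\|_{q,u,\I}$ for all such $h$.

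\emph{Superposition, lower bound, and the shape of $E$.} Writing $h(x)^q=\int_0^{\i}q\lambda^{q-1}\chi_{\{h>\lambda\}}(x)\,d\lambda$ and using that $\{h>\lambda\}=(a_\lambda,\i)$ for a non-decreasing family $a_\lambda$, one sees that $h$ is a superposition of the building blocks $\chi_{(t,\i)}$, for which $\|\chi_{(t,\i)}\|_{p,w,\I}=\|w\|_{p,(t,\i)}$ and $\|\chi_{(t,\i)}\|_{q,u,\I}=\|u\|_{q,(t,\i)}$. Testing on these blocks already gives $F(p,q)\gs\sup_{t}\|w\|_{p,(t,\i)}\|u\|_{q,(t,\i)}^{-1}$, and testing on $h\equiv1$ gives $F(p,q)\ge\|w\|_{p,\I}/\|u\|_{q,\I}$; together these furnish the lower bound $F\gs E$ in both (i) and (ii) and explain the two terms of $E$ in case (ii).

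\emph{Sufficiency.} Feeding the superposition into the norms produces $\|h\|_{p,w,\I}^p=\int_0^{\i}p\lambda^{p-1}\|w\|_{p,(a_\lambda,\i)}^p\,d\lambda$ and its analogue for $\|h\|_{q,u,\I}^q$. In \textbf{case (i)} ($q\le p$) I would set $\phi(\lambda):=\|u\|_{q,(a_\lambda,\i)}$, note $\phi$ is non-increasing, bound $\|w\|_{p,(a_\lambda,\i)}\le E\,\phi(\lambda)$ blockwise, and invoke the elementary embedding for non-increasing functions: since $\phi(\Lambda)\ls\Lambda^{-1}\big(\int_0^{\i}\lambda^{q-1}\phi^q\big)^{1/q}$, splitting $\phi^p=\phi^{\,p-q}\phi^q$ and bounding $\phi^{\,p-q}$ pointwise (here $p-q\ge0$) gives $\int_0^{\i}\lambda^{p-1}\phi^p\,d\lambda\ls\big(\int_0^{\i}\lambda^{q-1}\phi^q\big)^{p/q}$, hence $\|h\|_{p,w,\I}\ls E\|h\|_{q,u,\I}$. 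In \textbf{case (ii)} ($p<q$) I would integrate by parts: with $W(t):=\|w\|_{p,(t,\i)}^p$ and $U(t):=\|u\|_{q,(t,\i)}^q$ (non-increasing, vanishing at $\i$),
\[
\|h\|_{p,w,\I}^p=h(0+)^p\,\|w\|_{p,\I}^p+\int_{\I}W(t)\,d\big(h(t)^p\big),
\]
and likewise for $\|h\|_{q,u,\I}^q$ with $U$. The boundary term is absorbed by the endpoint part of $E$ via $h(0+)^p\|w\|_{p,\I}^p\le E^p\big(h(0+)^q\|u\|_{q,\I}^q\big)^{p/q}\le E^p\|h\|_{q,u,\I}^p$, while for the principal term, passing to $\Phi:=h^q$ and using $h^p=\Phi^{p/q}$ turns the required estimate into a weighted Hardy-type inequality
\[
\Big(\int_{\I}W\,d(\Phi^{p/q})\Big)^{1/p}\ls E\,\Big(\int_{\I}U\,d\Phi\Big)^{1/q},\qquad \tfrac1r=\tfrac1p-\tfrac1q,
\]
whose characterisation is exactly of the integral type supplied by Theorem~\ref{HardyIneq}(ii); carried out, it reproduces the functional $\big(\int_{\I}\|w\|_{p,(t,\i)}^r\,d(\|u\|_{q,(t,\i)}^{-r})\big)^{1/r}$.

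\emph{The main obstacle} is case (ii): the Stieltjes integration by parts must be justified for possibly discontinuous non-decreasing $h$, with the correct one-sided continuity and the interval measures of Convention~\ref{conv:3.3} (the degenerate values $\|u\|_{q,\I}=\i$ or $h(0+)=0$ being governed by the conventions $0\cdot\i=0$ and $1/\i=0$). To sidestep delicate duality in the quasi-normed range $p,q<1$, I would instead realise this step by \emph{discretisation}: split $\I$ at points $t_k$ where $\|u\|_{q,(t,\i)}$ runs through a geometric sequence, estimate $h$ on each block by a constant, and sum, the dichotomy between the supremum condition ($q\le p$) and the integral condition ($p<q$) arising precisely from the embedding $\ell^q\hookrightarrow\ell^p$ versus a genuine Hölder summation. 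Finally, sharpness of the integral condition in (ii) would be confirmed by constructing near-extremal step functions $h$ that saturate the discretised sum.
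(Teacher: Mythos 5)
First, a remark on the comparison itself: the paper does not prove Theorem~\ref{ReverseHardySupremal} at all --- it is imported from \cite{mu2015}, and Section~\ref{s3} states explicitly that the material from \cite{ego2008} and \cite{mu2015} is repeated \emph{without proofs}. So your argument can only be judged on its own merits. Its backbone is sound: since $Sg$ is non-decreasing, $g\le Sg$ a.e., and $Sh=h$ a.e. for non-decreasing $h$, the identity $F(p,q)=\sup_{h\uparrow}\|h\|_{p,w,\I}/\|h\|_{q,u,\I}$ is correct, and the level-set computation combined with the elementary embedding for non-increasing $\phi$ (valid precisely because $p-q\ge 0$) gives an essentially complete proof of part (i), modulo the routine adaptation at $p=\i$.

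The genuine gap is in part (ii), in the necessity direction. Testing on the blocks $\chi_{(t,\i)}$ and on $h\equiv 1$ yields only $F\gs \sup_{t}\|w\|_{p,(t,\i)}\|u\|_{q,(t,\i)}^{-1}$ and $F\ge\|w\|_{p,\I}/\|u\|_{q,\I}$; when $p<q$ the first term of $E(p,q)$ is the Stieltjes integral $\big(\int_{\I}\|w\|_{p,(t,\i)}^{r}\,d(\|u\|_{q,(t,\i)}^{-r})\big)^{1/r}$, which dominates that supremum (up to the endpoint term) but is in general strictly larger than it --- this is exactly why the theorem splits into two cases. Hence your sentence claiming that these two families of test functions ``furnish the lower bound $F\gs E$ in both (i) and (ii)'' is false for (ii); establishing $F\gs E$ there requires a genuine extremal non-decreasing $h$, namely a weighted superposition of blocks whose coefficients are dictated by the discretization at points where $\|u\|_{q,(t,\i)}$ runs through a geometric sequence. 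You acknowledge this in your closing sentence but never carry it out, and it is the substantive half of part (ii). A secondary issue: the sufficiency step in (ii) does not land on Theorem~\ref{HardyIneq}(ii) as literally stated, because after the integration by parts the target-side weight is the Borel measure $d(-\|w\|_{p,(t,\i)}^{p})$ rather than a weight times $dt$; you need either a measure version of the Hardy inequality, the reverse-Hardy machinery of Theorem~\ref{ReverseHardyIneq} and \cite{ego2008}, or the discretization you propose as an alternative --- the last of these is the cleanest way to make that step rigorous in the quasi-normed range, and it also supplies the missing extremal functions for the necessity.
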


\begin{thm}\cite[Theorem~3.4]{mu2015}\label{ReverseCopsonSupremal}
Let  $w \in \mp^+\I$ and $u \in \mp^+\I$ be such that $\|u\|_{q,(0,t)} <\infty$ for all $t\in (0,\i)$.

{\rm (i)} Assume that $0< q\le p\le \i$. Then
\begin{equation} \label{9.100}
\Vert g \Vert _{p,w,\I}\leq c \Vert S^*g\Vert_{q,u,(0,\i)}
\end{equation}
holds for all $g \in \mp^+\I$ if and only if
\begin{equation} \label{9.200}
E^*(p,q) : =\sup _{t\in (0,\i)}\| w \|_{{p},(0,t)} \|u\|_{q,(0,t)}^{-1} <\infty .
\end{equation}
The best possible constant in \eqref{9.100}, that is,
\begin{equation*}
F^*(p,q) : = \sup _{g \in \mp^+\I}\| g \|_{p,w,\I} / \|S^*g\|_{q,u,\I}
\end{equation*}
satisfies $F^*(p,q)\approx E^*(p,q)$.

{\rm (ii)} Let $0 < p < q \le +\i$ and $\frac{1}{r} = \frac{1}{p} - \frac{1}{q}$. Then \eqref{9.100}  holds if and only if
$$
E^*(p,q) : =\bigg( \int_{\I} \| w \|_{p,(0,t)}^{r}\,d\bigg(-\|u\|_{q,(0,t) }^{-r}\bigg) \bigg) ^{\frac{1}{r}} +\frac{\|w\|_{p,\I}} {\|u\|_{q,\I}}<\infty,
$$
and $F^*(p,q)\approx E^*(p,q)$.
\end{thm}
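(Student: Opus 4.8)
The plan is to build everything on the single structural feature of the operator $S^*$: for every $g\in\mp^+\I$ the function $S^*g$ is non-increasing, it dominates $g$ in the sense that $g\le S^*g$ a.e.\ on $\I$, and it is a fixed point, $S^*(S^*g)=S^*g$ a.e.\ (the last identity because the essential supremum over $(t,\i)$ of a non-increasing function equals its value at $t$ for a.e.\ $t$). Granting these three facts, I would reduce \eqref{9.100} to an embedding on the cone of non-increasing functions. Indeed, for any $g$ one has $\|g\|_{p,w,\I}\le\|S^*g\|_{p,w,\I}$ while $\|S^*g\|_{q,u,\I}=\|S^*(S^*g)\|_{q,u,\I}$, so that writing $h:=S^*g$ and using $S^*h=h$ for non-increasing $h$,
$$
F^*(p,q)=\sup_{g\in\mp^+\I}\frac{\|g\|_{p,w,\I}}{\|S^*g\|_{q,u,\I}}=\sup_{h\,\downarrow}\frac{\|h\|_{p,w,\I}}{\|h\|_{q,u,\I}},
$$
the reverse inequality for the last identity being trivial by the choice $g=h$. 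Thus the whole problem collapses to characterizing the pointwise comparison $\|h\|_{p,w,\I}\le c\,\|h\|_{q,u,\I}$ over non-increasing $h$, a classical weighted problem for monotone functions.

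For necessity I would simply insert admissible non-increasing test functions. Taking $h=\chi_{(0,t)}$ gives $\|w\|_{p,(0,t)}\le c\,\|u\|_{q,(0,t)}$ for every $t$, which is exactly $E^*(p,q)\le F^*(p,q)$ in case (i); in case (ii) the constant function $h\equiv{\bf 1}$ yields the endpoint term $\|w\|_{p,\I}/\|u\|_{q,\I}\le F^*(p,q)$, and the integral term is recovered by testing against an appropriate family of non-increasing step functions adapted to $\|u\|_{q,(0,t)}$.

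For sufficiency in case (i) ($q\le p$) the natural tool is the layer-cake formula. With $W(t):=\|w\|_{p,(0,t)}^{p}$ and $U(t):=\|u\|_{q,(0,t)}^{q}$, and using that the super-level sets $\{h>\lambda\}$ of a non-increasing $h$ are intervals $(0,t_\lambda)$, one obtains
$$
\|h\|_{p,w,\I}^{p}=p\int_0^{\i}\lambda^{p-1}W(t_\lambda)\,d\lambda,\qquad \|h\|_{q,u,\I}^{q}=q\int_0^{\i}\lambda^{q-1}U(t_\lambda)\,d\lambda.
$$
Inserting the pointwise bound $W(t_\lambda)\le E^*(p,q)^{p}\,U(t_\lambda)^{p/q}$ (which is just $E^*(p,q)=\sup_t W(t)^{1/p}U(t)^{-1/q}$) and then applying the elementary inequality $\int_0^{\i}\eta(y)^{a}y^{a-1}\,dy\le\big(\int_0^{\i}\eta(y)\,dy\big)^{a}$ — valid for non-increasing $\eta\ge0$ and $a=p/q\ge1$, proved from $y\,\eta(y)\le\int_0^{\i}\eta$ — with $\eta=U(t_\lambda)$ after the substitution $y=\lambda^{q}$ collapses the right-hand side to $E^*(p,q)^{p}\|h\|_{q,u,\I}^{p}$. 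This gives $F^*(p,q)\ls E^*(p,q)$, and together with necessity yields $F^*(p,q)\ap E^*(p,q)$.

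The main obstacle is sufficiency in case (ii) ($p<q$), where the power trick above fails because $a=p/q<1$. Here I would represent the non-increasing $h$ through the non-negative measure $\nu:=-dh$, so that $h(x)=\int_{(x,\i)}d\nu+h(\i)$; the constant $h(\i)$ produces precisely the endpoint summand $\|w\|_{p,\I}/\|u\|_{q,\I}$, while substitution of the integral representation turns the comparison $\|h\|_{p,w,\I}\le c\,\|h\|_{q,u,\I}$ into a genuine weighted Hardy-type inequality for $\nu$ of exactly the form treated in Theorems~\ref{HardyIneq} and \ref{ReverseCopsonIneq}. Its sharp constant is governed by an integral (Sawyer-type) condition with the exponent $r$, $\tfrac1r=\tfrac1p-\tfrac1q$, and a careful Fubini computation identifies it with the stated
$$
E^*(p,q)=\bigg(\int_{\I}\|w\|_{p,(0,t)}^{r}\,d\big(-\|u\|_{q,(0,t)}^{-r}\big)\bigg)^{\frac1r}+\frac{\|w\|_{p,\I}}{\|u\|_{q,\I}}.
$$
The delicate points are the correct treatment of the Lebesgue--Stieltjes measure $d\big(-\|u\|_{q,(0,t)}^{-r}\big)$ at the endpoints, for which the conventions recalled above are needed, and the passage between the measure $\nu$ and the distribution function $t_\lambda$; these are exactly where the technical work concentrates. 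The endpoint cases $p=\i$ and $q=\i$ are then handled by the obvious direct modifications of the layer-cake argument.
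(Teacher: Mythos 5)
The paper itself contains no proof of this statement: it is quoted from \cite{mu2015} in the block of results that the authors explicitly ``repeat \dots without proofs'', so there is no in-house argument to compare yours against; I can only judge the proposal on its own terms. Your opening reduction is correct and is the right first move: $S^*g$ is non-increasing, $g\le S^*g$ a.e., and $S^*h=h$ a.e.\ for non-increasing $h$, so $F^*(p,q)$ coincides with the best constant in $\|h\|_{p,w,\I}\le c\,\|h\|_{q,u,\I}$ over the cone of non-increasing $h$. Your case (i) is then essentially complete: the layer-cake identities, the pointwise insertion of $E^*(p,q)$, and the elementary inequality $a\int_0^\i y^{a-1}\eta(y)^a\,dy\le\big(\int_0^\i\eta\big)^a$ for non-increasing $\eta$ and $a=p/q\ge1$ give sufficiency, and $h=\chi_{(0,t)}$ gives necessity; the $p=\i$ endpoint is indeed a one-line modification.

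Case (ii) is where the substance of the theorem lies, and there your argument has a genuine gap. Writing $h=H^*\nu+h(\i)$ converts $\|h\|_{p,w,\I}\le c\,\|h\|_{q,u,\I}$ into the constant test plus the inequality $\|H^*\nu\|_{p,w,\I}\le c\,\|H^*\nu\|_{q,u,\I}$ for all non-negative measures $\nu$ --- an inequality with the Copson operator on \emph{both} sides. This is not ``of exactly the form treated in'' Theorems~\ref{HardyIneq} and~\ref{ReverseCopsonIneq}: the former bounds $\|Hf\|$ by $\|f\|$ and the latter bounds $\|g\|$ by $\|H^*g\|$, and neither covers a two-sided-operator comparison. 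Chaining them through an intermediate space $L_s(v)$ would only establish sufficiency of a factorization condition, which is in general strictly stronger than the embedding and which you do not identify with the stated $E^*(p,q)$. Likewise, the necessity of the integral term $\big(\int_{\I}\|w\|_{p,(0,t)}^r\,d(-\|u\|_{q,(0,t)}^{-r})\big)^{1/r}$ is only asserted (``an appropriate family of non-increasing step functions''); constructing that family adapted to the level sets of $\|u\|_{q,(0,\cdot)}$ and summing the resulting local estimates is precisely the discretization work that a proof of the case $p<q$ must contain. As written, case (ii) is a plausible plan rather than a proof, and since that is the nontrivial half of the theorem, the proposal is incomplete.
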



\section{Characterizations of $L_{p_1}(v_1) \hra \ces_{p_2,q}(u,v_2)$ and $L_{p_1}(v_1) \hra \cop_{p_2,q}(u,v_2)$}\label{s4}

In this section we characterize \eqref{emb1} and \eqref{emb2}.

The following theorem is true.
\begin{thm}\label{Emb-Lp-Ces}
Let $0 < p_2  \le p_1 \le \i$, $0 < q \le \i$, $v_1,\,v_2 \in \W\I$ and $u \in \O_q$.

{\rm (i)} If $p_1 \le q$, then
\begin{equation*}
\|\Id\|_{L_{p_1}(v_1) \rw \ces_{p_2,q}(u,v_2)} \ap \sup_{t \in \I} \big\| v_1^{-1} v_2 \big\|_{p_1 \rw p_2, (0,t)}\|u\|_{q,(t,\i)}
\end{equation*}
uniformly in $u \in \O_q$.

{\rm (ii)} If $q < p_1$, then
\begin{equation*}
\|\Id\|_{L_{p_1}(v_1) \rw \ces_{p_2,q}(u,v_2)} \ap
\bigg( \int_{(0,\infty)} \big\| v_1^{-1} v_2 \big\|_{p_1 \rw p_2,(0,t)}^{p_1 \rw q} \,d \bigg( - \|u\|_{q,(t,\i)}^{p_1 \rw q}\bigg) \bigg)^{\frac{1}{p_1 \rw q}}
\end{equation*}
uniformly in $u \in \O_q$.
\end{thm}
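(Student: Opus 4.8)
The plan is to linearize the outer structure of the Ces\`aro norm by a power substitution that turns the embedding into a classical weighted Hardy inequality of the form \eqref{eq.2.2}, to which Theorem~\ref{HardyIneq} applies. Since the $\ces$-norm depends only on $|f|$ it suffices to work with $f \in \mp^+\I$, and I first treat the principal case $p_2 < \i$. Put $g := f^{p_2} v_2^{p_2}$; as $v_2 \in \W\I$, the map $f \mapsto g$ is a bijection of $\mp^+\I$ onto itself, and $\int_0^t f^{p_2} v_2^{p_2} = (Hg)(t)$. A direct rewriting gives $\|f\|_{\ces_{p_2,q}(u,v_2)} = \|Hg\|_{q/p_2,\, u^{p_2},\I}^{1/p_2}$ and $\|f\|_{p_1,v_1,\I} = \|g\|_{p_1/p_2,\,(v_1 v_2^{-1})^{p_2},\I}^{1/p_2}$, so that the embedding inequality $\|f\|_{\ces_{p_2,q}(u,v_2)} \le c\,\|f\|_{p_1,v_1,\I}$ is equivalent, after raising to the power $p_2$, to
\begin{equation*}
\|Hg\|_{q/p_2,\, u^{p_2},\I} \le c^{p_2}\, \|g\|_{p_1/p_2,\,(v_1 v_2^{-1})^{p_2},\I}, \qq g \in \mp^+\I.
\end{equation*}
Here the inner exponent is $\tilde p := p_1/p_2$, and the hypothesis $p_2 \le p_1$ is exactly what guarantees $\tilde p \ge 1$, so that Theorem~\ref{HardyIneq} applies; this is the structural reason the reduction requires this range of parameters.

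Next I would read off the two cases from Theorem~\ref{HardyIneq} according to the position of $\tilde q := q/p_2$ relative to $\tilde p$. Since $p_1 \le q$ is equivalent to $\tilde p \le \tilde q$ and $q < p_1$ to $\tilde q < \tilde p$, case~(i) of the statement corresponds to the $p \le q$ alternative of Theorem~\ref{HardyIneq} and case~(ii) to the $q < p$ alternative. It then remains to evaluate $A(\tilde p,\tilde q)$ for the transformed weights $w = u^{p_2}$ and $v = (v_1 v_2^{-1})^{p_2}$. The two elementary identities driving the computation are
\begin{equation*}
\big\|v^{-1}\big\|_{\tilde p',(0,t)} = \big\|v_1^{-1} v_2\big\|_{p_1 \rw p_2,(0,t)}^{p_2} \qq \mbox{and} \qq \|w\|_{\tilde q,(t,\i)} = \|u\|_{q,(t,\i)}^{p_2},
\end{equation*}
which follow from $p_2\,\tilde p' = p_1 \rw p_2$ and $(p_1 \rw p_2)/\tilde p' = p_2$ together with $\tilde q\, p_2 = q$. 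In case~(i) this gives $A(\tilde p,\tilde q) = \big(\sup_{t\in\I} \|v_1^{-1}v_2\|_{p_1\rw p_2,(0,t)}\|u\|_{q,(t,\i)}\big)^{p_2}$, while in case~(ii), using $\tfrac{1}{\tilde r} = \tfrac{1}{\tilde q} - \tfrac{1}{\tilde p} = p_2/(p_1\rw q)$ and hence $p_2\,\tilde r = p_1 \rw q$, it gives $A(\tilde p,\tilde q) = \big(\int_{\I} \|v_1^{-1}v_2\|_{p_1\rw p_2,(0,t)}^{p_1\rw q}\, d(-\|u\|_{q,(t,\i)}^{p_1\rw q})\big)^{1/\tilde r}$.

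Finally, since $\|\Id\|_{L_{p_1}(v_1)\rw\ces_{p_2,q}(u,v_2)}^{p_2}$ is the best constant $B(\tilde p,\tilde q)$ in the transformed inequality and $B(\tilde p,\tilde q)\ap A(\tilde p,\tilde q)$ with constants depending only on $p_1,p_2,q$ (hence uniformly in $u$), taking $p_2$-th roots, which is monotone and so preserves the equivalence, turns the two displayed expressions into exactly the claimed formulas, the exponent $1/\tilde r$ in case~(ii) becoming $1/(p_2\tilde r) = 1/(p_1\rw q)$. The main obstacle, beyond the careful exponent bookkeeping just indicated, is the endpoint behaviour: when $p_2 = \i$ (which forces $p_1 = \i$) the substitution degenerates, since the inner integral becomes $\esup_{(0,\cdot)} f v_2 = S(f v_2)$, so one must instead invoke the supremal Hardy inequality of Theorem~\ref{HardyforSupremal} (treating $q = \i$ separately by a direct monotonicity argument). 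The case $p_1 = \i$ with $p_2 < \i$, by contrast, needs no separate treatment: it is absorbed into the computation above through the conventions $\tilde p' = 1$ and $p \rw q = \i$ when $q \ge p$, and verifying that the resulting expressions again collapse to the stated $p_1 \rw p_2$- and $p_1\rw q$-quantities is where the remaining care is needed.
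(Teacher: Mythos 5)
Your proposal is correct and follows essentially the same route as the paper: rewrite the embedding constant via the substitution $g = f^{p_2}v_2^{p_2}$ as the best constant in the Hardy inequality $\|Hg\|_{q/p_2,\,u^{p_2}} \le C\|g\|_{p_1/p_2,\,(v_1v_2^{-1})^{p_2}}$ and apply Theorem~\ref{HardyIneq}, with Theorem~\ref{HardyforSupremal} handling $p_2=\infty$. Your explicit exponent bookkeeping ($p_2\tilde p' = p_1\rw p_2$, $p_2\tilde r = p_1\rw q$) and the remark on the $q=\infty$ endpoint are details the paper leaves implicit, but the argument is the same.
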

\begin{proof}
Let $p_2 < \infty$. Since
\begin{align*}
\|\Id\|_{L_{p_1}(v_1) \rw \ces_{p_2,q}(u,v_2)}  = \sup_{f \in \M^+ \I} \frac{\big\| \|f\|_{p_2,v_2,(0,\cdot)}\big\|_{q,u,(0,\infty)}}{\|f\|_{p_1,v_1,\I}} = \left(\sup_{g \in \M^+ \I} \frac{\| H (|g|)\|_{\frac{q}{p_2},u^{p_2},\I}}{\|g\|_{\frac{p_1}{p_2},[v_1v_2^{-1}]^{p_2},\I}}\right)^{\frac{1}{p_2}},
\end{align*}
it remains to apply Theorem \ref{HardyIneq}.

If $p_2 = \infty$, then
\begin{align*}
\|\Id\|_{L_{\infty}(v_1) \rw \ces_{\i,q}(u,v_2)}  = \sup_{f \in \M^+ \I} \frac{\big\| \|f\|_{\i,v_2,(0,\cdot)}\big\|_{q,u,(0,\infty)}}{\|f\|_{\i,v_1,\I}} = \sup_{g \in \M^+ \I}  \frac{\| (S(|g|))u \|_{q,\I} }{\big\|gv_1v_2^{-1}\big\|_{\i,\I}},
\end{align*}
and the statement follows by Theorem \ref{HardyforSupremal}.
\end{proof}

The following statement can be proved analogously.
\begin{thm}\label{Emb-Lp-Cop}
Let $0 < p_2  \le p_1 \le \i$, $0 < q \le \i$, $v_1,\,v_2 \in \W\I$ and $u \in \dual{\O}_q$.

{\rm (i)} If $p_1 \le q$, then
\begin{equation*}
\|\Id\|_{L_{p_1}(v_1) \rw \cop_{p_2,q}(u,v_2)} \ap \sup_{t \in \I} \big\| v_1^{-1} v_2 \big\|_{p_1 \rw p_2, (t,\infty)}\|u\|_{q,(0,t)}
\end{equation*}
uniformly in $u \in \dual{\O}_q$.

{\rm (ii)} If $q < p_1$, then
\begin{equation*}
\|\Id\|_{L_{p_1}(v_1) \rw \cop_{p_2,q}(u,v_2)} \ap
\bigg( \int_{(0,\infty)} \big\| v_1^{-1} v_2 \big\|_{p_1 \rw p_2,(t,\infty)}^{p_1 \rw q} \,d \bigg( \|u\|_{q,(0,t)}^{p_1 \rw q}\bigg) \bigg)^{\frac{1}{p_1 \rw q}}
\end{equation*}
uniformly in $u \in \dual{\O}_q$.
\end{thm}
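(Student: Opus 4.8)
The plan is to follow the proof of Theorem~\ref{Emb-Lp-Ces} almost verbatim, replacing the Hardy operator $H$ by the Copson operator $H^*$ and Theorem~\ref{HardyIneq} by its Copson counterpart, Theorem~\ref{CopsonIneq}. The only genuine content is a change of variables that turns the inner norm $\|f\|_{p_2,v_2,(\cdot,\i)}$ into an $H^*$-average, after which the two displayed formulas drop out of the known weighted Copson--Hardy inequality.

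First I would treat the case $p_2 < \i$. Writing
\begin{equation*}
\|\Id\|_{L_{p_1}(v_1) \rw \cop_{p_2,q}(u,v_2)} = \sup_{f \in \M^+\I} \frac{\big\| \|f\|_{p_2,v_2,(\cdot,\i)} \big\|_{q,u,\I}}{\|f\|_{p_1,v_1,\I}},
\end{equation*}
I would substitute $g = f^{p_2} v_2^{p_2}$; since $v_2 \in \W\I$ is positive and finite a.e., $f \mapsto g$ is a bijection of $\M^+\I$ modulo null functions, so the supremum is preserved. Because $\|f\|_{p_2,v_2,(t,\i)}^{p_2} = (H^* g)(t)$, the quotient becomes
\begin{equation*}
\|\Id\|_{L_{p_1}(v_1) \rw \cop_{p_2,q}(u,v_2)} = \left( \sup_{g \in \M^+\I} \frac{\|H^* g\|_{\frac{q}{p_2},u^{p_2},\I}}{\|g\|_{\frac{p_1}{p_2},[v_1 v_2^{-1}]^{p_2},\I}} \right)^{\frac{1}{p_2}},
\end{equation*}
and it remains to apply Theorem~\ref{CopsonIneq} with $p$, $q$, $v$, $w$ replaced by $p_1/p_2$, $q/p_2$, $[v_1 v_2^{-1}]^{p_2}$, $u^{p_2}$, respectively. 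The assumption $p_2 \le p_1$ yields $p_1/p_2 \ge 1$, so the theorem applies, and its dichotomy $p \le q$ versus $q < p$ is precisely $p_1 \le q$ versus $q < p_1$, matching the two cases of the statement.

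The main (and essentially only) work is the index bookkeeping needed to put the resulting $A^*(p_1/p_2,q/p_2)$ into the stated form. The key identities are $p_1 \rw p_2 = p_2 \, (p_1/p_2)'$, which gives
\begin{equation*}
\big\| [v_1 v_2^{-1}]^{-p_2} \big\|_{(p_1/p_2)',(t,\i)} = \big\| v_1^{-1} v_2 \big\|_{p_1 \rw p_2,(t,\i)}^{p_2} \qquad \text{and} \qquad \| u^{p_2} \|_{\frac{q}{p_2},(0,t)} = \| u \|_{q,(0,t)}^{p_2};
\end{equation*}
taking $p_2$-th roots turns case~(i) of Theorem~\ref{CopsonIneq} into the claimed supremum. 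For case~(ii) the Stieltjes exponent $r$ given by $\frac{1}{r} = \frac{p_2}{q} - \frac{p_2}{p_1}$ satisfies $p_2 \, r = p_1 \rw q$, so $\|[v_1 v_2^{-1}]^{-p_2}\|_{(p_1/p_2)',(t,\i)}^{r}$ and $\|u^{p_2}\|_{q/p_2,(0,t)}^{r}$ both carry the exponent $p_1 \rw q$, and raising the integral to the power $\frac{1}{p_2 r} = \frac{1}{p_1 \rw q}$ reproduces the stated expression; here $u \in \dual{\O}_q$ is exactly what makes $t \mapsto \|u\|_{q,(0,t)}$ finite and non-decreasing, so that $d\big(\|u\|_{q,(0,t)}^{p_1 \rw q}\big)$ is a legitimate non-negative Borel measure.

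Finally, the degenerate case $p_2 = \i$ (which forces $p_1 = \i$) is handled as in Theorem~\ref{Emb-Lp-Ces}: now $\|f\|_{\i,v_2,(t,\i)} = (S^*(f v_2))(t)$, the substitution $g = f v_2$ reduces the quotient to $\sup_g \|S^* g\|_{q,u,\I} / \|g\|_{\i,v_1 v_2^{-1},\I}$, and Theorem~\ref{CopsonforSupremal} applies when $q < \i$ (case~(ii) with $p_1 = \i$); the remaining fully degenerate case $p_1 = p_2 = q = \i$ follows from the identification $\cop_{\i,\i}(u,v_2) = L_\i\big(v_2 \|u\|_{\i,(0,\cdot)}\big)$ of Lemma~\ref{Coppp}. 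The only thing to watch throughout is that the index arithmetic of the previous paragraph is carried out correctly; no new analytic difficulty arises.
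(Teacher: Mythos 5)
Your proposal is correct and is essentially the paper's own argument: the paper proves Theorem~\ref{Emb-Lp-Ces} by the substitution $g=f^{p_2}v_2^{p_2}$ followed by Theorem~\ref{HardyIneq} (resp.\ Theorem~\ref{HardyforSupremal} for $p_2=\i$), and disposes of Theorem~\ref{Emb-Lp-Cop} with the single remark that it ``can be proved analogously'' --- which is exactly the dual reduction to Theorems~\ref{CopsonIneq} and \ref{CopsonforSupremal} that you spell out. Your index bookkeeping ($p_1\rw p_2=p_2(p_1/p_2)'$ and $p_2r=p_1\rw q$) is the correct verification that the paper leaves implicit.
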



\section{Characterizations of $ \ces_{p_2,q}(u,v_2) \hra L_{p_1}(v_1) $ and $  \cop_{p_2,q}(u,v_2)\hra L_{p_1}(v_1)$}\label{s5}

In this section we characterize the embeddings \eqref{emb3} and \eqref{emb4}.

\begin{thm}\label{Emb-Ces-Lp}
Let $0 < p_1 \le p_2 \le \infty$, $0 < q \le \infty$, $v_1,\,v_2 \in \W\I$ and $u \in \O_{q}$ .

{\rm (i)} If $q \le p_1$, then
\begin{equation*}
\|\Id\|_{\ces_{p_2,q}(u,v_2) \rw L_{p_1}(v_1)} \ap \sup_{t \in \I} \big\|v_1  v_2^{-1} \big\|_{p_1 \rw p_2,(t,\i)}\|u\|_{q,(t,\infty)}^{-1}
\end{equation*}
uniformly in $u \in \O_q$.

{\rm (ii)} If $p_1 < q$, then
\begin{align*}
\|\Id\|_{\ces_{p_2,q}(u,v_2) \rw L_{p_1}(v_1)} \ap & \bigg(
\int_{\I} \big\| v_1 v_2^{-1} \big\|_{p_1 \rw p_2,(t,\i)}^{q \rw
p_1} d \bigg( \|u\|_{q,(t-,\infty)}^{- q \rw p_1} \bigg)
\bigg)^{\frac{1}{q \rw p_1}}  +  \frac{\big\|v_1 v_2^{-1}\big\|_{p_1 \rw p_2,\I}}{\|u\|_{q,(0,\infty)}}
\end{align*}
uniformly in $u \in \O_q$.
\end{thm}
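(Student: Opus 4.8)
The plan is to mirror the proof of Theorem~\ref{Emb-Lp-Ces}, but with the \emph{direct} Hardy inequalities replaced by their \emph{reverse} counterparts from Section~\ref{s3}. Writing the embedding constant as
\[
\|\Id\|_{\ces_{p_2,q}(u,v_2) \rw L_{p_1}(v_1)} = \sup_{f \in \M^+\I} \frac{\|f\|_{p_1,v_1,\I}}{\big\|\,\|f\|_{p_2,v_2,(0,\cdot)}\big\|_{q,u,\I}},
\]
I first take $p_2 < \i$. The Ces\`aro functional already contains the Hardy operator $H$, but it now sits in the denominator, which is exactly the configuration governed by a reverse Hardy inequality. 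The substitution $h = f^{p_2} v_2^{p_2}$ (a bijection of $\M^+\I$ onto itself) turns the quotient into
\[
\|\Id\|_{\ces_{p_2,q}(u,v_2) \rw L_{p_1}(v_1)} = \bigg( \sup_{h \in \M^+\I} \frac{\|h\|_{\frac{p_1}{p_2},\,(v_1 v_2^{-1})^{p_2},\,\I}}{\|Hh\|_{\frac{q}{p_2},\,u^{p_2},\,\I}} \bigg)^{\frac{1}{p_2}},
\]
so the embedding constant equals $D(P,Q)^{1/p_2}$ in the notation of Theorem~\ref{ReverseHardyIneq} with $P=p_1/p_2$, $Q=q/p_2$, $W=(v_1 v_2^{-1})^{p_2}$, $U=u^{p_2}$. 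The standing hypothesis $p_1 \le p_2$ forces $P \le 1$, which is precisely the range in which the reverse Hardy inequality holds.

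I would then split on the size of $q$ relative to $p_1$. If $q \le p_1$, then $0 < Q \le P \le 1$ and Theorem~\ref{ReverseHardyIneq}(i) yields $D(P,Q) \ap \sup_{t \in \I}\|W\|_{P',(t,\i)}\|U\|_{Q,(t,\i)}^{-1}$; if $p_1 < q$, then $0 < P \le 1$ and $P < Q$, so Theorem~\ref{ReverseHardyIneq}(ii) applies and produces the Lebesgue--Stieltjes integral together with the additive term $\|W\|_{P',\I}/\|U\|_{Q,\I}$. It remains to undo the substitution. The unambiguous ingredients are $\|u^{p_2}\|_{q/p_2,(t,\i)} = \|u\|_{q,(t,\i)}^{p_2}$, the conjugate index $P'$ of $P = p_1/p_2 \le 1$ read off from Convention~\ref{Notat.and.prelim.conv.1.1}(ii), and the identity $r\,p_2 = q\rw p_1$ for $1/r = 1/P - 1/Q$; raising $D(P,Q)^{1/p_2}$ and using these reproduces the index $p_1\rw p_2$ and the weight $v_1 v_2^{-1}$ exactly as displayed in (i) and (ii). In case (ii) one additionally invokes Remark~\ref{R:5.5} to replace $\|u\|_{q,(t-,\i)}$ by $\|u\|_{q,(t,\i)}$ (legitimate since $q<\i$) and the elementary equivalence $(a+b)^{1/p_2}\ap a^{1/p_2}+b^{1/p_2}$ to distribute the outer power over the two summands. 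Since $u\in\O_q$ gives $\|U\|_{Q,(t,\i)}=\|u\|_{q,(t,\i)}^{p_2}<\i$ for all $t$, Theorem~\ref{ReverseHardyIneq} applies, and because its equivalence constants depend only on $p_1,p_2,q$ the resulting $\ap$ is uniform in $u\in\O_q$.

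Finally I would dispatch the endpoint $p_2=\i$ separately, exactly as in the proof of Theorem~\ref{Emb-Lp-Ces}. There $\|f\|_{\i,v_2,(0,t)} = (S(fv_2))(t)$, so the substitution $g = fv_2$ reduces the quotient to $\sup_g \|g\|_{p_1,v_1 v_2^{-1},\I}/\|Sg\|_{q,u,\I}$, which is controlled by the reverse supremal Hardy inequality: Theorem~\ref{ReverseHardySupremal}(i) handles $q \le p_1$ and Theorem~\ref{ReverseHardySupremal}(ii) handles $p_1 < q$, and the outputs match the $p_2=\i$ instances of the two formulas. The genuinely delicate part of the argument is not analytic but organisational: one must carry the exponents and weights faithfully through the two substitutions and check that $P'$, the balance index $p_1\rw p_2$, the outer exponent $q\rw p_1$, and the differential $d\big(\|u\|_{q,(t,\i)}^{-(q\rw p_1)}\big)$ align with the precise shape demanded by Theorem~\ref{ReverseHardyIneq}(ii). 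No new analytic difficulty arises, since all the hard content is already packaged in the reverse Hardy-type theorems recalled in Section~\ref{s3}.
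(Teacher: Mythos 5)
Your proposal is correct and follows exactly the paper's own route: the substitution $g=f^{p_2}v_2^{p_2}$ (resp.\ $g=fv_2$ when $p_2=\infty$) turns the embedding constant into the best constant of a reverse Hardy (resp.\ reverse supremal) inequality, and Theorem~\ref{ReverseHardyIneq} (resp.\ Theorem~\ref{ReverseHardySupremal}) is then applied with $P=p_1/p_2\le 1$ and $Q=q/p_2$. Your index bookkeeping ($P'$, $rp_2=q\rw p_1$, the additive term) is the only content the paper leaves implicit, and you carry it out correctly.
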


\begin{proof}
Let $p_2 < \infty$. Since
\begin{align*}
\|\Id\|_{\ces_{p_2,q}(u,v_2) \rw L_{p_1}(v_1)} = \sup_{f \in \M^+ \I} \frac{\|f\|_{p_1,v_1,\I}} {\big\| \|f\|_{p_2,v_2,(0,t)} \big\|_{q,u,(0,\infty)}} = \left(\sup_{g \in \M^+ \I} \frac{\big\|g ( v_1 v_2^{-1})^{p_2} \big\|_{\frac{p_1}{p_2},\I}}{\big\| H(|g|) u^{p_2} \big\|_{\frac{q}{p_2},\I}}\right)^{\frac{1}{p_2}},
\end{align*}
it remains to apply Theorem \ref{ReverseHardyIneq}.

If $p_2 = \infty$, then
\begin{align*}
\|\Id\|_{\ces_{p_2,q}(u,v_2) \rw L_{p_1}(v_1)} = \sup_{f \in \M^+ \I} \frac{\|f\|_{p_1,v_1,\I}} {\big\| \|f\|_{p_2,v_2,(0,t)} \big\|_{q,u,(0,\infty)}} = \sup_{g \in \M^+ \I} \frac{\big\|g v_1 v_2^{-1} \big\|_{p_1,\I}}{\big\| S(|g|) u \big\|_{q,\I}},
\end{align*}
and the statement follows by Theorem \ref{ReverseHardySupremal}.
\end{proof}

The following statement can be proved analogously.
\begin{thm}\label{Emb-Cop-Lp}
Let $0 < p_1 \le p_2 \le \infty$, $0 < q \le \infty$, $v_1,\,v_2 \in \W\I$ and $u \in \dual\O_q$.

{\rm (i)} If $q \le p_1$, then
\begin{equation*}
\|\Id\|_{\cop_{p_2,q}(u,v_2) \rw L_{p_1}(v_1)} \ap \sup_{t \in \I} \big\|v_1  v_2^{-1} \big\|_{p_1 \rw p_2,(0,t)}\|u\|_{q,(0,t)}^{-1}
\end{equation*}
uniformly in $u \in \dual\O_q$.

{\rm (ii)} If $p_1 < q$ then
\begin{align*}
\|\Id\|_{\cop_{p_2,q}(u,v_2) \rw L_{p_1}(v_1)} \ap & \bigg(
\int_{\I} \big\| v_1 v_2^{-1} \big\|_{p_1 \rw p_2,(0,t)}^{q \rw
p_1} d \bigg( - \|u\|_{q,(0,t+)}^{- q \rw p_1} \bigg)
\bigg)^{\frac{1}{q
\rw p_1}} +  \frac{\big\|v_1 v_2^{-1}\big\|_{p_1 \rw p_2,\I}}{\|u\|_{q,(0,\infty)}}
\end{align*}
uniformly in $u \in \dual\O_q$.
\end{thm}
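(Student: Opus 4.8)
The plan is to follow the proof of Theorem~\ref{Emb-Ces-Lp} line by line, replacing throughout the Hardy operator $H$ by the Copson operator $H^*$, the supremal operator $S$ by $S^*$, and the reverse Hardy inequalities (Theorems~\ref{ReverseHardyIneq} and \ref{ReverseHardySupremal}) by their reverse Copson counterparts (Theorems~\ref{ReverseCopsonIneq} and \ref{ReverseCopsonSupremal}). Since $\|f\|_{\cop_{p_2,q}(u,v_2)} = \big\| \|f\|_{p_2,v_2,(\cdot,\i)} \big\|_{q,u,\I}$, the inner norm is now taken over the tails $(t,\i)$ rather than over the initial segments $(0,t)$, which is precisely what turns $H$ into $H^*$; everything else is formally identical to the Ces\`aro case.

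First I would treat the case $p_2 < \i$. Writing the operator norm as a supremum of a quotient and substituting $g = f^{p_2} v_2^{p_2}$, so that $H^*(|g|)(t) = \|f\|_{p_2,v_2,(t,\i)}^{p_2}$, I would obtain
\begin{align*}
\|\Id\|_{\cop_{p_2,q}(u,v_2) \rw L_{p_1}(v_1)} = \sup_{f \in \M^+ \I} \frac{\|f\|_{p_1,v_1,\I}}{\big\| \|f\|_{p_2,v_2,(\cdot,\i)} \big\|_{q,u,\I}} = \left( \sup_{g \in \M^+ \I} \frac{\big\| g (v_1 v_2^{-1})^{p_2} \big\|_{\frac{p_1}{p_2},\I}}{\big\| H^*(|g|) u^{p_2} \big\|_{\frac{q}{p_2},\I}} \right)^{\frac{1}{p_2}}.
\end{align*}
This last quotient is exactly the reciprocal of the best constant in the reverse Copson inequality of Theorem~\ref{ReverseCopsonIneq}, applied with the parameters $\tilde{p} = p_1/p_2$, $\tilde{q} = q/p_2$ and the weights $\tilde{w} = (v_1 v_2^{-1})^{p_2}$, $\tilde{u} = u^{p_2}$. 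Here $p_1 \le p_2$ forces $\tilde{p} \le 1$; the assumption $q \le p_1$ then yields $\tilde{q} \le \tilde{p} \le 1$, placing us in part~(i), while $p_1 < q$ yields $\tilde{p} \le 1 < \tilde{q}$, placing us in part~(ii).

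It then remains to translate the constant $C^*(\tilde{p},\tilde{q})$ back to the original data and raise it to the power $1/p_2$. Using $\|\tilde{u}\|_{\tilde{q},(0,t)} = \|u\|_{q,(0,t)}^{p_2}$ and $\|\tilde{w}\|_{\tilde{p}',(0,t)} = \|v_1 v_2^{-1}\|_{p_1 \rw p_2,(0,t)}^{p_2}$, the supremal expression of part~(i) reproduces the stated formula when $q \le p_1$, and the integral expression of part~(ii), after invoking Remark~\ref{R:5.5} to identify the one-sided limit $\|u\|_{q,(0,t+)}$ with $\|u\|_{q,(0,t)}$, reproduces the formula (together with its boundary term) when $p_1 < q$. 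Finally I would settle the case $p_2 = \i$ by the substitution $g = f v_2$, under which $\|f\|_{\i,v_2,(\cdot,\i)} = S^*(|g|)$ and the operator norm becomes $\sup_{g \in \M^+\I} \|g v_1 v_2^{-1}\|_{p_1,\I} / \|S^*(|g|) u\|_{q,\I}$, so that Theorem~\ref{ReverseCopsonSupremal} delivers both regimes at once. I expect the only real work to be the index bookkeeping --- checking that $p_2 \tilde{p}'$ equals the index $p_1 \rw p_2$ of the statement and that the weight powers cancel exactly under the substitution --- together with the routine appeal to Remark~\ref{R:5.5} for the one-sided limits; there is no conceptual obstacle beyond the reverse Copson inequality itself.
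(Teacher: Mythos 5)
Your proposal is exactly the argument the paper intends: its own ``proof'' of Theorem~\ref{Emb-Cop-Lp} is the single remark that it can be proved analogously to Theorem~\ref{Emb-Ces-Lp}, and your reduction via $g=f^{p_2}v_2^{p_2}$ (resp.\ $g=fv_2$ when $p_2=\infty$) to Theorems~\ref{ReverseCopsonIneq} and \ref{ReverseCopsonSupremal}, with the stated index bookkeeping, is precisely that analogous proof. It is correct and takes the same route as the paper.
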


\begin{defi}
Let $X$ be a set of functions from $\M\I$, endowed with a positively
homogeneous functional $\|\cdot \|_X$, defined for every $f\in \M\I$
and such that $f\in X$ if and only if $\|f\|_X<\i$. We define the
associate space $X'$ of $X$ as the set of all functions $f\in \M\I$
such that $\|f\|_{X'}<\i$, where
$$
\|f\|_{X'}=\sup \bigg\{\int_{\I}|f(x)g(x)|\,dx : \,\,\|g\|_X \leq
1\bigg\}.
$$
\end{defi}

In particular, Theorems \ref{IterHar3.2.0} and \ref{Emb-Cop-Lp}
allows us to give a characterization of the associate spaces of
weighted Ces\`{a}ro and Copson function spaces.
\begin{thm}\label{thm.6.6}
Assume $1\leq p<\i$, $0 < q \leq \i$. Let  $u \in \O_q$ and $v \in
\W\I$. Set
$$
X=\ces_{p,q}(u,v).
$$

{\rm (i)} Let $0 < q \leq 1$. Then
$$
\|f\|_{X'}\ap \sup_{t \in \I} \|f \|_{p',v^{- 1},(t,\i)} \|u\|_{q,(t,\i)}^{-1},
$$
with the positive constants in equivalence independent of $f$.

{\rm (ii)} Let $1 < q \leq \i$. Then
$$
\|f\|_{X'}\ap \bigg(\int_{\I} \|f \|_{{p'},v^{-1},(t,\i)}^{q'}d
\bigg(\|u\|_{q,(t-,\i)}^{-q'}\bigg)\bigg)^{\frac{1}{q'}}+\|f\|_{p',v^{-1},\I}\|u\|_{q,\I}^{-1},
$$
with the positive constants in equivalence independent of $f$.
\end{thm}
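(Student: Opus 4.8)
The plan is to compute the associate space norm of $X = \ces_{p,q}(u,v)$ by unravelling the definition and reducing to a reverse Hardy-type inequality already characterized in Section~\ref{s3}. By definition,
\[
\|f\|_{X'} = \sup_{\|g\|_{\ces_{p,q}(u,v)} \le 1} \int_{\I} |f(x) g(x)|\,dx.
\]
The key observation is that this is precisely the best constant in a boundedness statement. Indeed, the quantity $\|f\|_{X'}$ equals $\|\Id\|_{\ces_{p,q}(u,v) \rw L_1(|f|^{-1})}$ in the sense that
\[
\int_{\I} |fg| \le c \, \|g\|_{\ces_{p,q}(u,v)} \quad \text{for all } g
\]
holds with optimal $c = \|f\|_{X'}$. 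The plan is therefore to recognize that $\int_{\I} |fg| = \|g\|_{1, |f|, \I}$ and that controlling this by $\|g\|_{\ces_{p,q}(u,v)}$ is exactly the embedding $\ces_{p,q}(u,v) \hra L_1(|f|)$ studied in Theorem~\ref{Emb-Ces-Lp}, specialized to $p_1 = 1$ and outer weight $v_1 = |f|$.

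First I would apply Theorem~\ref{Emb-Ces-Lp} with $p_1 = 1$, $p_2 = p$, $v_1 = |f|$, and $v_2 = v$. The hypothesis $0 < p_1 \le p_2$ becomes $1 \le p$, which is exactly the assumption $1 \le p < \i$ in the present statement. The dichotomy $q \le p_1$ versus $p_1 < q$ becomes $q \le 1$ versus $1 < q$, matching parts (i) and (ii) here. In the first regime, Theorem~\ref{Emb-Ces-Lp}(i) gives
\[
\|f\|_{X'} \ap \sup_{t \in \I} \big\| |f| v^{-1} \big\|_{1 \rw p, (t,\i)} \|u\|_{q,(t,\i)}^{-1}.
\]
It then remains only to identify the index $1 \rw p$. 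Since $p \ge 1 = p_1$, by the definition of $p_1 \rw p_2$ we have $\frac{1}{1 \rw p} = 1 - \frac{1}{p} = \frac{1}{p'}$, so $1 \rw p = p'$, and the norm $\big\| |f| v^{-1}\big\|_{p',(t,\i)} = \|f\|_{p', v^{-1},(t,\i)}$, which yields exactly the claimed expression in part (i).

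For part (ii), I would invoke Theorem~\ref{Emb-Ces-Lp}(ii) in the regime $p_1 < q$, i.e. $1 < q$. The output is
\[
\|f\|_{X'} \ap \bigg( \int_{\I} \big\| |f| v^{-1}\big\|_{1 \rw p,(t,\i)}^{q \rw 1} \,d\big( \|u\|_{q,(t-,\i)}^{-q \rw 1}\big)\bigg)^{\frac{1}{q \rw 1}} + \frac{\big\||f|v^{-1}\big\|_{1\rw p,\I}}{\|u\|_{q,\I}}.
\]
Again $1 \rw p = p'$, and since $q > 1 = p_1$ we compute $\frac{1}{q \rw 1} = 1 - \frac{1}{q} = \frac{1}{q'}$, so $q \rw 1 = q'$. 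Substituting these two index identifications reproduces the displayed formula in part (ii) verbatim. The main obstacle, if any, is purely bookkeeping: one must verify that the two operations $p_1 \rw p_2$ collapse correctly to $p'$ and $q'$ under the substitution $p_1 = 1$, and that the weight relabelling $v_1 \mapsto |f|$ is legitimate, that is, that $|f| \in \W\I$ or more precisely that the characterization of Theorem~\ref{Emb-Ces-Lp} continues to hold when the outer weight is merely a nonnegative measurable function rather than a strictly positive weight. Since the proof of Theorem~\ref{Emb-Ces-Lp} passes through the reverse Hardy inequality of Theorem~\ref{ReverseHardyIneq}, which is stated for $w \in \mp^+\I$, this causes no difficulty, and the argument is essentially a direct specialization with no further analytic work required.
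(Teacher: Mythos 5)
Your proof is correct and is essentially the paper's own argument: the paper obtains Theorem \ref{thm.6.6} precisely by identifying $\|f\|_{X'}$ with $\|\Id\|_{\ces_{p,q}(u,v)\rw L_1(|f|)}$ and specializing Theorem \ref{Emb-Ces-Lp} to $p_1=1$, $v_1=|f|$, $v_2=v$, under which $p_1\rw p_2$ becomes $p'$ and $q\rw p_1$ becomes $q'$. (The lone slip, writing $L_1(|f|^{-1})$ where $L_1(|f|)$ is meant, is corrected in your very next sentence and does not affect the argument.)
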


\begin{thm}\label{thm.6.5}
Assume $1 \leq p<\i$, $0 < q \leq \i$. Let  $u \in \dual{\O}_q$ and
$v \in \W\I$. Set
$$
X=\cop_{p,q}(u,v).
$$

{\rm (i)} Let $0 < q \leq 1$. Then
$$
\|f\|_{X'}\ap \sup_{t \in \I} \|f \|_{p',v^{- 1},(0,t)} \|u\|_{q,(0,t)}^{-1},
$$
with the positive constants in equivalence independent of $f$.

{\rm (ii)} Let $1 < q \leq \i$. Then
$$
\|f\|_{X'}\ap \bigg(\int_{\I} \|f \|_{p',v^{-1},(0,t)}^{q'} d
\bigg(- \|u\|_{q,(0,t+)}^{-q'} \bigg) \bigg)^{\frac{1}{q'}} + \|f\|_{p',v^{-1},\I} \|u\|_{q,\I}^{-1},
$$
with the positive constants in equivalence independent of $f$.
\end{thm}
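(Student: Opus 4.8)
The plan is to realise the associate norm as the norm of an embedding into a weighted $L_1$-space and then to read off the answer from Theorem~\ref{Emb-Cop-Lp}. Since every member of $X=\cop_{p,q}(u,v)$ is non-negative, for $f\in\M\I$ one has
\[
\|f\|_{X'}=\sup\Big\{\int_\I |f|\,g\,dx: g\in\M^+\I,\ \|g\|_{X}\le1\Big\}=\sup_{g\in\M^+\I}\frac{\|g\|_{1,|f|,\I}}{\|g\|_{X}}=\|\Id\|_{\cop_{p,q}(u,v)\rw L_1(|f|)},
\]
where the middle equality uses $\int_\I|f|\,g=\|g\|_{1,|f|,\I}$ in the weighted-$L_1$ convention. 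Thus the whole problem reduces to evaluating this embedding constant with outer weight $|f|$, and I would first reduce to the case $f\in\M^+\I$.

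First I would apply Theorem~\ref{Emb-Cop-Lp} with the specialisation $p_1=1$, $p_2=p$, $v_1=|f|$, $v_2=v$, and the given $u$ and $q$. The hypotheses are met: $0<p_1=1\le p=p_2<\i$ holds because $1\le p<\i$, and $u\in\dual{\O}_q$ by assumption. The only delicate point is that Theorem~\ref{Emb-Cop-Lp} is phrased for $v_1\in\W\I$, while $|f|$ need be neither positive nor finite a.e.; this is harmless because the proof of that theorem rests on the reverse Copson inequality of Theorem~\ref{ReverseCopsonIneq}, whose weight $w$ is only required to lie in $\M^+\I$, so the two formulas remain valid for $v_1=|f|$. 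If one prefers to stay literally within the hypotheses, one truncates $f$ so that $\varepsilon\le|f|\le\varepsilon^{-1}$ on $(\varepsilon,\varepsilon^{-1})$, applies the theorem, and lets $\varepsilon\dn0$ by monotone convergence.

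It remains to translate the parameters. The dichotomy $q\le p_1$ versus $p_1<q$ of Theorem~\ref{Emb-Cop-Lp} becomes exactly $q\le1$ versus $1<q$, matching parts (i) and (ii). The inner index $p_1\rw p_2$ occurring there equals $p'$, since $\tfrac1{p'}=1-\tfrac1p=\tfrac1{p_1}-\tfrac1{p_2}$, and hence $\|v_1v_2^{-1}\|_{p_1\rw p_2,(0,t)}=\||f|\,v^{-1}\|_{p',(0,t)}=\|f\|_{p',v^{-1},(0,t)}$; similarly the outer index $q\rw p_1$ equals $q'$, as $\tfrac1{q'}=1-\tfrac1q$. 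Substituting these identifications into the two displays of Theorem~\ref{Emb-Cop-Lp} reproduces precisely the asserted formulas: the additive term $\|v_1v_2^{-1}\|_{p_1\rw p_2,\I}/\|u\|_{q,\I}$ turns into $\|f\|_{p',v^{-1},\I}\|u\|_{q,\I}^{-1}$, and the one-sided quantity $\|u\|_{q,(0,t+)}$ is inherited verbatim. Because the equivalence constants in Theorems~\ref{Emb-Cop-Lp} and \ref{ReverseCopsonIneq} do not depend on the weight $v_1$, they are independent of $f$, as claimed. The genuine obstacle is thus not any computation but the admissibility of $|f|$ as the outer weight, which the $\M^+\I$-generality of the reverse Copson inequality resolves.
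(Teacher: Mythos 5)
Your proposal is correct and follows exactly the route the paper intends: the paper derives Theorem~\ref{thm.6.5} by identifying $\|f\|_{X'}$ with the embedding constant $\|\Id\|_{\cop_{p,q}(u,v)\rw L_1(|f|)}$ and invoking Theorem~\ref{Emb-Cop-Lp} with $p_1=1$, $p_2=p$, so that $p_1\rw p_2=p'$ and $q\rw p_1=q'$, which is precisely your computation. Your extra care about $|f|$ not lying in $\W\I$ (resolved via the $\mp^+\I$-generality of Theorem~\ref{ReverseCopsonIneq}, or by truncation) is a legitimate refinement of a point the paper passes over silently.
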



\section{The iterated Hardy-type inequalities}\label{Iterated Hardy Type Inequalities}\label{s6}

In this section we recall characterizations of weighted iterated Hardy-type inequalities
\begin{equation}\label{eq.4.11}
\big\|\|H^* f\|_{p,u,(0,\cdot)}\big\|_{q,w,\I}\leq c \,\|f\|_{\theta,v,\I},~f \in \M^+\I,
\end{equation}
where $0 < p,\,q \le \infty$, $1 < \theta < \infty$.

Note that weighted iterated Hardy-type inequalities
have been intensely investigated recently (see, for instance, \cite{gmp} and \cite{GogMusPers2}, when $0 < p < \infty$, $0 < q \le \infty$, $1 \le \theta \le \infty$, and  \cite{gop}, when $p = \infty$, $0 < q < \infty$, $1 \le \theta < \infty$. For more detailed  information see recent papers \cite{GogMusIHI} and \cite{GogMusISI}). There exists different solutions of these inequalities. We will use the characterizations from  \cite{gmp} and \cite{gop}.

Everywhere in this section, $u$, $v$ and $w$ are weights on $\I$, and we denote
$$
U(t)=\int_0^t u(\tau)\,d\tau \qquad \mbox{and}\qquad V_{\t}(t)=
\int_t^{\i}{v(\tau)}^{1-\t'}d\tau  \quad \mbox{for}\,\, 1<\t<\i.
$$
We assume that $u$ is such that $U(t)>0$ for every $t\in\I$.

\begin{defi}
    Let $U$ be a continuous, strictly increasing function on $[0,\i)$ such that $U(0)=0$ and $\lim_{t\rw\i}U(t)=\i$. Then we say that $U$ is admissible.
\end{defi}

Let $U$ be an admissible function. We say that a function $\vp$ is $U$-quasiconcave if $\vp$ is equivalent to an increasing function on $(0,\i)$ and ${\vp} / {U}$ is equivalent to a decreasing function on $(0,\infty)$. We say that a $U$-quasiconcave function $\vp$ is non-degenerate if
$$
\lim_{t\rw 0+} \vp(t) = \lim_{t\rw \i} \frac{1}{\vp(t)} = \lim_{t\rw \i} \frac{\vp(t)}{U(t)} = \lim_{t\rw 0+} \frac{U(t)}{\vp(t)} =0.
$$
The family of non-degenerate $U$-quasiconcave functions is denoted by $Q_U$.
\begin{defi}
    Let $U$ be an admissible function, and let $w$ be a nonnegative measurable function on $(0,\i)$. We say that the function $\vp$ defined by
    \begin{equation*}
    \vp(t)=U(t)\int_0^{\infty} \frac{w(\tau)\,d\tau}{U(\tau)+U(t)}, \qq t\in (0,\i),
    \end{equation*}
    is a fundamental function of $w$ with respect to $U$. One will also say that $w(s)\,ds$ is a representation measure of $\vp$ with respect to $U$.
\end{defi}

Denote by
$$
{\mathcal U}(x,t): = \frac{U(x)}{U(t)+U(x)}.
$$

\begin{rem}\label{nondegrem}
    Let $\vp$ be the fundamental function of $w$ with
    respect to $U$.
    Assume that
    \begin{equation*}
    \int_0^{\infty}\frac{w(\tau)\,d\tau}{U(\tau)+U(t)}<\i, ~ t> 0, \qq \int_0^1 \frac{w(\tau)\,d\tau}{U(\tau)}=\int_1^{\infty}w(\tau)\,d\tau=\infty.
    \end{equation*}
    Then $\vp\in Q_{U}$.
\end{rem}

First we recall the characterization of \eqref{eq.4.1}, when $p < \infty$ and $q < \infty$.
\begin{thm}\cite[Theorem 3.1]{gmp}\label{IterHar3.1.0}
    Let $0<q<\i$, $0<p<\i$, $1<\t  < \i$ and let $u,\,v,\,w$ be
    weights. Assume that $u$ be such that $U$ is
    admissible and $\vp \in Q_{U^{\frac{q}{p}}}$, where $\vp$ is defined by
    \begin{equation}\label{eq.4.3}
    \vp(x) =\int_0^{\i}{\mathcal U}(x,\tau)^{\frac{q}{p}}w(\tau)\,d\tau
    \qquad\mbox{for all} \qquad x\in \I.
    \end{equation}
    Then inequality
    \begin{equation}\label{eq.4.1}
    \bigg(\int_0^{\i}\bigg(\frac{1}{U(t)}\int_0^t\bigg(\int_{\tau}^{\i}h(z)dz\bigg)^p u(\tau)\,d\tau\bigg)
    ^{\frac{q}{p}}w(t)dt\bigg)^{\frac{1}{q}}\leq c \,\bigg(\int_0^{\infty} h(t)^{\theta} v(t)\,dt\bigg)^{\frac{1}{\theta}},
    \end{equation}
    holds for every measurable function $f$ on $\I$ if and only if

    {\rm (i)} $\t \leq \min\{p,q\}$
    $$
    A_1:=\sup_{x\in (0,\i)}\bigg(\int_0^{\i}{\mathcal
        U}(x,\tau)^{\frac{q}{p}}w(\tau)\,d\tau\bigg)^{\frac{1}{q}}
    \sup_{t\in (0,\infty)}{\mathcal
        U}(t,x)^{\frac{1}{p}}{V_{\t}(t)}^{\frac{1}{\t'}}<\i.
    $$
    Moreover, the best constant $c$ in \eqref{eq.4.1} satisfies $c\ap
    A_1$.

    {\rm (ii)} $q<\t \le p$, $l=\frac{\t q}{\t-q}$
    \begin{gather*}
    A_2:=\bigg(\int_0^{\i} \bigg(\int_0^{\i}{\mathcal
        U}(x,\tau)^{\frac{q}{p}}w(\tau)\,d\tau\bigg)^{\frac{l-q}{q}}{w(x)} \sup_{t \in \I}
    {\mathcal U}(t,x)^{\frac{l}{p}}{V_{\t}(t)}^{\frac{l}{\t'}}dx\bigg)^{\frac{1}{l}}<\i.
    \end{gather*}
    Moreover, the best constant $c$ in \eqref{eq.4.1} satisfies $c\ap
    A_2$.

    {\rm (iii)} $p<\t \leq q$, $r=\frac{\t p}{\t-p}$
    \begin{gather*}
    A_3:= \sup_{x\in\I}\bigg(\int_0^{\i} {\mathcal
        U}(x,\tau)^{\frac{q}{p}}w(\tau)\,d\tau\bigg)^{\frac{1}{q}} \bigg(
    \int_0^{\i}{\mathcal U}(t,x)^{\frac{r}{p}}
    {V_{\t}(t)}^{\frac{r}{p'}}{v(t)}^{1-\t'}dt
    \bigg)^{\frac{1}{r}}<\i.
    \end{gather*}
    Moreover, the best constant $c$ in \eqref{eq.4.1} satisfies $c\ap
    A_3$.

    {\rm (iv)} $\max\{p,q\}<\t$, $r=\frac{\t p}{\t-p}$,
    $l=\frac{\t q}{\t-q}$
    \begin{gather*}
    A_4:=\bigg(\int_0^{\i}\bigg(\int_0^{\i}{\mathcal
        U}(x,\tau)^{\frac{q}{p}}w(\tau)\,d\tau\bigg)^{\frac{l-q}{q}}w(x) \bigg(
    \int_0^{\i}{\mathcal U}(t,x)^{\frac{r}{p}}
    {V_{\t}(t)}^{\frac{r}{p'}}{v(t)}^{1-\t'}dt
    \bigg)^{\frac{l}{r}}dx\bigg)^{\frac{1}{l}}<\i.
    \end{gather*}
    Moreover, the best constant $c$ in \eqref{eq.4.1} satisfies $c\ap
    A_4$.
\end{thm}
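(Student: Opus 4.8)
The plan is to treat the left-hand side of \eqref{eq.4.1} as the composition of the inner Copson operator $H^*$ with the weighted averaging operator $R_u g := \big(\tfrac{1}{U(\cdot)}\int_0^\cdot g^p u\big)^{1/p}$, and to exploit that for every $h\in\M^+\I$ the inner function $g:=H^*h$ is non-increasing and locally absolutely continuous. Thus the whole inequality is governed by the behaviour of $R_u$ on non-increasing functions, and the strategy splits into two stages: first reduce the outer averaged norm $\|R_u g\|_{q,w}$ to a functional built from the fundamental function $\varphi$ of \eqref{eq.4.3}, and then recognise what remains as a genuine (non-iterated) Copson inequality for $H^*$, which is already characterised by Theorems~\ref{CopsonIneq} and \ref{CopsonforSupremal}. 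The four regimes are forced by this two-stage picture: each stage contributes either a supremum condition or an integral condition according to whether $\t$ lies below or above the relevant index, namely $p$ for the inner stage and $q$ for the outer one, so that the combinations $\{\t\le p,\ p<\t\}\times\{\t\le q,\ q<\t\}$ produce exactly $A_1,\dots,A_4$.

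Stage one is the heart of the argument. Using that $\varphi\in Q_{U^{q/p}}$ is non-degenerate, I would construct a discretising sequence $\{x_k\}$ adapted to $\varphi$ (with $\varphi(x_{k+1})\ap 2\varphi(x_k)$ and the $x_k$ exhausting $\I$), and on each block estimate the average $\tfrac1{U(t)}\int_0^t g^pu$ from above and below by supremal quantities of the form $\sup_{\tau\le t} g(\tau)^p\,\mathcal U(\tau,t)$, which is legitimate precisely because $g$ is non-increasing. Anti-discretising then yields the equivalence
\[
\|R_u g\|_{q,w}\ap \Big\|\, x\mapsto \varphi(x)^{1/q}\,\|g\|_{p,W_x}\,\Big\|_{Y},
\]
where $W_x$ is the weight determined by $\int_0^t W_x^p=\mathcal U(t,x)$, and $Y$ is a supremum in $x$ when $\t\le q$ (cases (i),(iii)) and an $l$-power integral against the representation measure $\varphi(x)^{(l-q)/q}w(x)\,dx$ when $q<\t$ (cases (ii),(iv)), with $l=\t q/(\t-q)$, $1/l=1/q-1/\t$. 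The non-degeneracy of $\varphi$ and the admissibility of $U$ are what guarantee both that $\{x_k\}$ covers $\I$ and that the passage between suprema and integrals costs only multiplicative constants.

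Stage two freezes the scale $x$ and identifies the inner factor $\|g\|_{p,W_x}=\|H^*h\|_{p,W_x}$ through the Copson characterisations, with target exponent $p$, source exponent $\t$, and outer weight $W_x$. Since $V_\t(t)=\int_t^\infty v^{1-\t'}=\|v^{-1/\t}\|_{\t',(t,\infty)}^{\t'}$, Theorem~\ref{CopsonIneq} shows that $\|H^*h\|_{p,W_x}\le c\big(\int_0^\i h^\t v\big)^{1/\t}$ holds with inner factor $\sup_t \mathcal U(t,x)^{1/p}V_\t(t)^{1/\t'}$ in the regime $\t\le p$ (cases (i),(ii)) and with the corresponding $r$-power integral, $1/r=1/p-1/\t$, in the regime $p<\t$ (cases (iii),(iv)); the supremal endpoint $p=\infty$ is covered by Theorem~\ref{CopsonforSupremal}. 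In the supremum regime $\t\le q$ the outer $\sup_x$ commutes with the optimisation over $h$, so freezing $x$, applying Copson, and taking $\sup_x$ gives $c\ap A_1$ or $A_3$ directly; in the integral regime $q<\t$ one must additionally characterise the $L^l(\varphi^{(l-q)/q}w)$-valued Copson inequality, producing the nested integrals of $A_2$ and $A_4$. Tracking the two-sided constants through both stages yields $c\ap A_i$.

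The main obstacle is Stage one together with the integral-regime assembly: converting the averaged outer norm into the $\varphi$-governed functional uniformly across all four regimes, and, when $q<\t$, handling the vector-valued (integral-in-$x$) Copson inequality rather than a single frozen-scale estimate. The genuinely delicate case is $q<p$ (hence (ii) and (iv)), where the average cannot be dominated by a single supremum and one is forced into the $l$-power integral; there the discretisation must be performed carefully and the anti-discretisation step—which turns the discrete Hardy inequality for the block sums back into the continuous $l$- and $r$-power integrals—is exactly where the non-degeneracy hypothesis $\varphi\in Q_{U^{q/p}}$ is used in full strength.
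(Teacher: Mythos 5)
First, a remark on the ground truth: the paper does not prove this statement at all --- Theorem~\ref{IterHar3.1.0} is imported verbatim from \cite[Theorem 3.1]{gmp} and Section~\ref{s6} explicitly only \emph{recalls} these characterizations. So there is no in-paper proof to match; your sketch has to be judged on its own and against the method of the cited source, which is indeed the discretization/anti-discretization scheme you describe (a $\vp$-adapted covering sequence, blockwise use of the one-dimensional Hardy--Copson weight characterizations, then anti-discretization). At that level of architecture your plan is the right one, and your Stage two (freezing the scale and invoking Theorem~\ref{CopsonIneq} with the identification $V_\t(t)=\|v^{-1/\t}\|_{\t',(t,\i)}^{\t'}$) is sound.

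The genuine gap is the central claim of Stage one: that for non-increasing $g$ the average $\frac{1}{U(t)}\int_0^t g^p u$ can be estimated ``from above and below'' by $\sup_{\tau\le t} g(\tau)^p\,\mathcal U(\tau,t)$. Only the lower bound is legitimate ($\int_0^t g^pu\ge g(\tau)^pU(\tau)$ for each $\tau\le t$); the upper bound is false. Take $u\equiv 1$, so $U(t)=t$, and $g(\tau)^p=\min(1,1/\tau)$: then $\frac1t\int_0^t g^p\approx (1+\log^+ t)/t$ while $\sup_{\tau\le t}g(\tau)^p\tau/t=\min(1,1/t)$, a logarithmic discrepancy that no constant absorbs. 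Consequently the reduction of the left-hand side of \eqref{eq.4.1} to a purely supremal functional of $g=H^*h$ is not ``legitimate because $g$ is non-increasing,'' and this is precisely where the argument cannot be waved through: in cases (ii) and (iv) (and especially when $q<p$) the summation structure of the inner average must be retained, since replacing it by a supremum produces a strictly smaller functional and would in general yield conditions weaker than $A_2$, $A_4$. The proof in \cite{gmp} does not majorize the average pointwise; it discretizes the full iterated quantity over the $\vp$-adapted partition, keeps the block sums $\sum_{j\le k}\int_{x_{j-1}}^{x_j}g^pu$, and obtains necessity by saturating each block with suitable test functions $h$ --- the passage between sums and suprema is performed only for quantities that are already $U$-quasiconcave (such as $\vp$ itself), where it is justified by the non-degeneracy hypothesis. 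As written, your Stage one therefore does not establish the upper estimate of the left-hand side needed for the sufficiency of $A_1,\dots,A_4$, and the ``delicate case'' you correctly identify ($q<\t$, $q<p$) is exactly the one your reduction would get wrong.
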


\begin{rem}\label{limsupcondition}
    Suppose that $\vp (x) < \i$ for all $x \in (0,\i)$, where $\vp$
    defined by
    \begin{equation*}
    \vp(x)=\esup_{t\in(0,x)}{U(t)}
    \esup_{\tau\in(t,\i)}\frac{w(\tau)}{U(\tau)},~~t\in\I.
    \end{equation*}
    If
    $$
    \limsup_{t \rightarrow 0 +} w(t) = \limsup_{t \rightarrow +\infty} \frac{1}{w(t)} = \limsup_{t \rightarrow 0 +} \frac{U(t)}{w(t)} = \limsup_{t \rightarrow +\infty} \frac{w(t)}{U(t)} = 0,
    $$
    then
    $\vp\in Q_{U}$.
\end{rem}
We now state the announced characterization of \eqref{eq.4.2}, when $p < \infty$ and $q = \infty$.
\begin{thm}\cite[Theorem 3.2]{gmp}\label{IterHar3.2.0}
    Let $0<p<\i$, $1<\t<\i$ and let $u,\,v,\,w$ be weights. Assume that $u$ is
    such that $U$ is admissible and $\vp \in Q_{U^{\frac{1}{p}}}$, where $\vp$ is defined by  \begin{equation}\label{eq.2.0}
    \vp(x)=\esup_{t\in\I}w(t){\mathcal U}(x,t)^{\frac{1}{p}},~~x\in\I.
    \end{equation}
    Then inequality
    \begin{equation}\label{eq.4.2}
    \esup_{t\in\I}w(t)\bigg(\frac{1}{U(t)}\int_0^t\bigg(\int_{\tau}^{\i}h(z)dz\bigg)^pu(\tau)\,d\tau\bigg)
    ^{\frac{1}{p}} \leq c \,\bigg(\int_0^{\infty} h(t)^{\theta} v(t)\,dt\bigg)^{\frac{1}{\theta}},
    \end{equation}
    holds for every measurable
    function $f$ on $\I$ if and only if

    {\rm (i)} $\t\leq p$ and
    \begin{gather*}
    B_1:=\sup_{x\in\I}\esup_{\tau\in\I}w(\tau){\mathcal
        U}(x,\tau)^{\frac{1}{p}} \sup_{t\in
        (0,\i)}{\mathcal
        U}(t,x)^{\frac{1}{p}} {V_{\t}(t)}^{\frac{1}{\t'}}<\i.
    \end{gather*}
    Moreover, the best constant $c$ in \eqref{eq.4.2} satisfies $c\ap
    B_1$.

    {\rm (ii)} $p<\t$, $r=\frac{\t p}{\t-p}$ and
    \begin{gather*}
    B_2:= \sup_{x\in\I}\esup_{\tau\in\I}w(\tau){\mathcal
        U}(x,\tau)^{\frac{1}{p}} \bigg(\int_0^{\i}{\mathcal
        U}(t,x)^{\frac{r}{p}}
    {V_{\t}(t)}^{\frac{r}{p'}}{v(t)}^{1-\t'}dt\bigg)^{\frac{1}{r}}<\i.
    \end{gather*}
    Moreover, the best constant $c$ in \eqref{eq.4.2} satisfies $c\ap
    B_2$.
\end{thm}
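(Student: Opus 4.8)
The plan is to exploit that the outer norm in \eqref{eq.4.2} is an essential supremum, which decouples the inequality over the level $x$ and reduces it to a one-parameter family of ordinary (non-iterated) Copson inequalities on the truncated intervals $(0,x)$. Rewriting the left-hand side of \eqref{eq.4.2} as $\esup_{x\in\I}w(x)U(x)^{-1/p}\|H^*h\|_{p,u^{1/p},(0,x)}$ and interchanging the supremum over $h$ with the essential supremum over $x$ — the bound ``$\le$'' being immediate from positive homogeneity, and ``$\ge$'' following by testing a single level $x$ with a near-extremiser of the corresponding inequality — one obtains
\begin{equation*}
c\ap\esup_{x\in\I}w(x)\,U(x)^{-1/p}C(x),
\end{equation*}
where, for each fixed $x$, $C(x)$ denotes the optimal constant in
\begin{equation*}
\|H^*h\|_{p,u^{1/p},(0,x)}\le C(x)\Big(\int_0^{\i}h^{\theta}v\Big)^{1/\theta},\qquad h\in\mp^+\I.
\end{equation*}
Since the left-hand norm is nondecreasing in $x$, the function $C$ is nondecreasing.

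Next I would evaluate $C(x)$ by the quoted characterisation of the Copson inequality, Theorem \ref{CopsonIneq}, applied with its exponents $p,q$ equal to $\theta,p$, inner weight $v^{1/\theta}$ and outer weight $u^{1/p}\chi_{(0,x)}$; here $\|v^{-1/\theta}\|_{\theta',(t,\i)}=V_{\theta}(t)^{1/\theta'}$ and $\|u^{1/p}\chi_{(0,x)}\|_{p,(0,t)}=U(\min\{t,x\})^{1/p}$. For $\theta\le p$ this gives $C(x)\ap\sup_{t}V_{\theta}(t)^{1/\theta'}U(\min\{t,x\})^{1/p}$, and for $p<\theta$, with $r=\theta p/(\theta-p)$, it gives $C(x)\ap(\int_0^{x}V_{\theta}(t)^{r/\theta'}\,d(U(t)^{r/p}))^{1/r}$. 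Writing $\Phi(x):=U(x)^{-1/p}C(x)$ and using $U(\min\{t,x\})/U(x)\ap{\mathcal U}(t,x)$, the first case becomes $\Phi(x)\ap\sup_{t}{\mathcal U}(t,x)^{1/p}V_{\theta}(t)^{1/\theta'}$, which is the inner factor of $B_1$; in the second case, rewriting $v^{1-\theta'}dt=-dV_{\theta}$ and integrating by parts (using $r/p'+1=r/\theta'$) converts the inner factor of $B_2$ into $(\int_0^{\i}V_{\theta}^{r/\theta'}\,d({\mathcal U}(t,x)^{r/p}))^{1/r}$, whose portion over $(0,x)$ is $\ap\Phi(x)$ and whose tail over $(x,\i)$ is $\lesssim V_{\theta}(x)^{r/\theta'}\le\Phi(x)^{r}$, so the inner factor of $B_2$ is also $\ap\Phi(x)$.

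The final and technically most delicate step is to replace the raw weight $w$ in $\esup_{x}w(x)\Phi(x)$ by its fundamental function $\varphi(x)=\esup_{\tau}w(\tau){\mathcal U}(x,\tau)^{1/p}$, so as to reach the stated quantities $B_1$ and $B_2$. Since ${\mathcal U}(x,x)^{1/p}\ap1$ we have $\varphi\gtrsim w$, giving $\esup_{x}w\Phi\lesssim\sup_{x}\varphi\Phi$. For the reverse inequality the key is that $\Phi$ is $U^{1/p}$-quasiconcave: $\Phi(x)U(x)^{1/p}=C(x)$ is nondecreasing, while $\Phi$ itself is nonincreasing (immediate in the first case from the monotonicity of ${\mathcal U}(t,\cdot)$, and in the second from the fundamental theorem of calculus together with the monotonicity of $V_{\theta}$). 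These two facts yield the pointwise bound ${\mathcal U}(x,\tau)^{1/p}\Phi(x)\lesssim\Phi(\tau)$ for all $x,\tau$, whence $\varphi(x)\Phi(x)=\esup_{\tau}w(\tau){\mathcal U}(x,\tau)^{1/p}\Phi(x)\lesssim\esup_{\tau}w(\tau)\Phi(\tau)$, and taking the supremum over $x$ gives $\sup_{x}\varphi\Phi\lesssim\esup_{x}w\Phi$. Combining, $c\ap\sup_{x}\varphi(x)\Phi(x)$, which is exactly $B_1$ when $\theta\le p$ and $B_2$ when $p<\theta$. I expect this anti-discretisation — reconciling the essential-supremum condition in $w$ with the clean expression in the fundamental function $\varphi$ — to be the main obstacle, the role of the hypothesis $\varphi\in Q_{U^{1/p}}$ being precisely to guarantee that $\varphi$ is finite and nondegenerate so that these manipulations, and the quantities $B_1,B_2$ themselves, are meaningful; the decoupling and the appeal to Theorem \ref{CopsonIneq} are by comparison routine.
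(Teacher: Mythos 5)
First, a point of reference: the paper does not prove Theorem \ref{IterHar3.2.0} at all --- it is imported verbatim from \cite[Theorem 3.2]{gmp}, which establishes it by discretisation/anti-discretisation techniques --- so there is no internal proof to compare yours against. Taken on its own terms, your decoupling strategy is a legitimate alternative for this $L_\infty$-outer-norm case, and most of it checks out: the evaluation of $C(x)$ via Theorem \ref{CopsonIneq} on the truncated weight $u\chi_{(0,x)}$, the identification $U(\min\{t,x\})/U(x)\ap{\mathcal U}(t,x)$, the integration by parts turning the inner factor of $B_2$ into $\big(\int_0^{\i}V_{\t}^{r/\t'}\,d({\mathcal U}(t,x)^{r/p})\big)^{1/r}$ (modulo boundary terms, which the nondegeneracy hypothesis $\vp\in Q_{U^{1/p}}$ disposes of), and the two-sided passage between $\esup_x w\Phi$ and $\sup_x\vp\Phi$ using the exact $U^{1/p}$-quasiconcavity of $\Phi=U^{-1/p}C$ are all sound. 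One small caveat: the identity $r/p'+1=r/\t'$ relies on $1/p'=1-1/p$, which under Convention \ref{Notat.and.prelim.conv.1.1}(ii) fails for $0<p<1$; this is a transcription issue in how $B_2$ is read rather than an error in your computation, but it should be flagged when $p<1$.

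The one step that does not survive as written is the lower bound in your decoupling, $c\gtrsim\esup_x w(x)U(x)^{-1/p}C(x)$: ``testing a single level $x$ with a near-extremiser'' proves nothing against an essential supremum, since $w$ is only defined almost everywhere and $\esup_y w(y)U(y)^{-1/p}\|H^*h\|_{p,u^{1/p},(0,y)}$ need not dominate the integrand at any prescribed point $x$. The repair is exactly the quasiconcavity mechanism of your final paragraph, applied at the level of test functions rather than after the fact: for each fixed $h$ the function $\Psi(\tau):=U(\tau)^{-1/p}\|H^*h\|_{p,u^{1/p},(0,\tau)}$ is nonincreasing (it is the $p$-th root of an average of the nonincreasing function $(H^*h)^p$ over a growing interval), while $U(\tau)^{1/p}\Psi(\tau)$ is nondecreasing; hence $\Psi(\tau)\ge{\mathcal U}(x,\tau)^{1/p}\Psi(x)$ for all $\tau,x$, so that $c\,\|h\|_{\t,v^{1/\t},\I}\ge\esup_{\tau}w(\tau)\Psi(\tau)\ge\vp(x)\,U(x)^{-1/p}\|H^*h\|_{p,u^{1/p},(0,x)}$ for every $x$. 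Taking the supremum over $h$ gives $c\ge\vp(x)\Phi(x)$ directly, which both closes the gap and makes the intermediate quantity $\esup_x w(x)\Phi(x)$ unnecessary. With this substitution your argument goes through.
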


For a given weight $v$, $0 \le a < b \le \infty$ and $1 \le \theta < \infty$, we denote
$$
v_{\theta} (a,b) = \begin{cases}
\bigg ( \int\limits_a^b [v(t)]^{1-\theta'}dt\bigg)^{\frac{1}{p'}} & \qquad \mbox{when} ~ 1 < \theta < \infty, \\
\esup\limits_{t \in (a,b)} \, [v(t)]^{-1} & \qquad \mbox{when} ~ \theta = 1.
\end{cases}
$$

Finally, recall the characterization of \eqref{eq.4.2}, when $p = \infty$ and $q < \infty$.
\begin{thm}\cite[Theorems 4.1 and 4.4]{gop} \label{IterHarSupremal.1}
Let $1 \le \theta < \infty$, $0 < q < \infty$ and let $u \in \W\I \cap C\I$. Assume that $v,\,w \in \W\I$ be such that
$$
0 < \int_x^{\infty} v(\tau)\,d\tau < \i \qquad \mbox{and} \qquad  0 < \int_x^{\infty} w(\tau)\,d\tau < \infty \qq \mbox{for all} \qq x > 0.
$$
Then inequality
\begin{equation}\label{ISI.2}
\bigg(\int_0^{\infty} \bigg( \sup_{\tau \in (0,t)} \,u(\tau) \int_{\tau}^{\infty} h(z)\,dz\bigg)^q w(t)\,dt \bigg)^{\frac{1}{q}} \leq c \,\bigg(\int_0^{\infty} h(t)^{\theta} v(t)\,dt\bigg)^{\frac{1}{\theta}}
\end{equation}
is satisfied with the best constant $c$ if and only if:

{\rm (i)} $\theta \le q$, and in this case $c \ap A_1$, where
$$
A_1: = \sup_{x \in \I}\bigg( \bigg[\sup_{\tau \in (0,x)} u(\tau)\bigg]^q \int_x^{\infty} w(\tau)\,d\tau + \int_0^x  \bigg[\sup_{\tau \in (0,t)} u(\tau)\bigg]^q  w(t)\,dt\bigg)^{\frac{1}{q}}v_{\theta}(x,\infty);
$$

{\rm (ii)} $q < \theta$ and $\frac{1}{r} = \frac{1}{q} - \frac{1}{\theta}$,  and in this case $c \ap B_1 + B_2$, where
\begin{align*}
B_1: & = \bigg(\int_0^{\infty} \bigg(\int_0^x  \bigg[\sup_{\tau \in (0,t)} u(\tau)\bigg]^q w(t)\,dt\bigg)^{\frac{r}{\theta}} \bigg[\sup_{\tau \in (0,x)} u(\tau)\bigg]^q \bigg[v_{\theta}(x,\infty)\bigg]^r w(x)\,dx \bigg)^{\frac{1}{r}}, \\
B_2: & = \bigg(\int_0^{\infty} \bigg( \int_x^{\infty} w(\tau)\,d\tau \bigg)^{\frac{r}{\theta}} \bigg[\sup_{\tau \in (0,x)} \bigg[\sup_{y \in (0,\tau)} u(y)\bigg] v_{\theta} (\tau,\infty) \bigg]^r  w(x)\,dx \bigg)^{\frac{1}{r}}.
\end{align*}
\end{thm}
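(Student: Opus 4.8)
The plan is to establish the equivalences $c\ap A_1$ (case $\theta\le q$) and $c\ap B_1+B_2$ (case $q<\theta$) by proving the lower and upper bounds for the optimal $c$ separately in each regime. The single analytic ingredient feeding both directions is Hölder's inequality in the form
\begin{equation*}
\int_\tau^{\i}h(z)\,dz\le v_{\theta}(\tau,\i)\bigg(\int_\tau^{\i}h^{\theta}v\bigg)^{1/\theta},\qquad \tau>0,
\end{equation*}
which is sharp for $h=v^{1-\theta'}\chi_{(\tau,\i)}$ (with the reading $v_{\theta}(\tau,\i)=\esup_{(\tau,\i)}v^{-1}$ when $\theta=1$). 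The outer structure of \eqref{ISI.2} is the weighted supremal operator $t\mapsto\sup_{\tau\in(0,t)}u(\tau)g(\tau)$ evaluated at the non-increasing function $g=H^*h$; this is of the type controlled by Theorem~\ref{HardyforSupremal}, the essential new feature being that its argument is the tail integral $H^*h$ rather than an arbitrary function, so the supremal estimate and the Hardy--Hölder estimate must be interlaced.

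For the lower bound I would test \eqref{ISI.2} on the family $h_x=v^{1-\theta'}\chi_{(x,\i)}$, $x>0$. Then $\int_\tau^{\i}h_x=v_{\theta}(x,\i)^{\theta'}$ for $0<\tau\le x$, while $\big(\int_0^{\i}h_x^{\theta}v\big)^{1/\theta}=v_{\theta}(x,\i)^{\theta'-1}$. Hence for $t>x$ the inner supremum is at least $\big[\sup_{\tau\in(0,x)}u(\tau)\big]v_{\theta}(x,\i)^{\theta'}$, so restricting the left-hand side of \eqref{ISI.2} to $(x,\i)$ and dividing by $\big(\int_0^{\i}h_x^{\theta}v\big)^{1/\theta}$ produces the summand $\big[\sup_{(0,x)}u\big]^q\int_x^{\i}w$ of $A_1$; restricting instead to $(0,x)$, where the supremum equals $\big[\sup_{(0,t)}u\big]v_{\theta}(x,\i)^{\theta'}$, reproduces $\int_0^x\big[\sup_{(0,t)}u\big]^qw\,dt$. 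Taking the supremum over $x$ gives $c\gs A_1$ in case (i); in case (ii) the same test functions, combined over a geometric sequence of values of the inner supremum, deliver $c\gs B_1+B_2$.

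For the upper bound I would argue by discretization. Normalizing $\int_0^{\i}h^{\theta}v=1$ and setting $F(t)=\sup_{\tau\in(0,t)}u(\tau)\int_\tau^{\i}h$, which is non-decreasing, choose a discretizing sequence $\{x_k\}$ along which $F$ grows geometrically. On each block one has $F(t)\le F(x_{k+1})$, so $\int_{x_k}^{x_{k+1}}F^qw\le F(x_{k+1})^q\int_{x_k}^{x_{k+1}}w$, and $F(x_{k+1})\ap u(\tau_k)\int_{\tau_k}^{\i}h$ for some $\tau_k\in(0,x_{k+1})$, which by the displayed Hölder estimate is at most $u(\tau_k)v_{\theta}(\tau_k,\i)\big(\int_{\tau_k}^{\i}h^{\theta}v\big)^{1/\theta}$. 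Organizing the blocks according to whether $\tau_k$ lies before or inside $(x_k,x_{k+1})$ produces, respectively, the pieces $\big[\sup_{(0,x)}u\big]^q\int_x^{\i}w$ and $\int_0^x\big[\sup_{(0,t)}u\big]^qw\,dt$ of $A_1$. When $\theta\le q$ the exponent $q/\theta\ge1$ lets one sum the localized quantities $\int_{\tau_k}^{\i}h^{\theta}v$ over the essentially disjoint intervals produced by the discretization back to the total $\int_0^{\i}h^{\theta}v=1$, giving $c\ls A_1$; when $q<\theta$ one replaces this step by the discrete Hölder inequality with exponent $r$, $1/r=1/q-1/\theta$, which converts the sums into the integral quantities defining $B_1$ and $B_2$ and yields $c\ls B_1+B_2$.

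The principal difficulty is the non-linear, non-local character of the outer supremum. In a plain Hardy inequality the relevant mass sits near the current point, whereas here the maximizer $\tau_k$ of $u(\tau)\int_\tau^{\i}h$ may lie anywhere in $(0,x_{k+1})$ and need not be comparable to the block endpoints. The heart of the proof is therefore to run the discretization so that these maximizers determine an essentially disjoint family and to separate cleanly the two mechanisms by which $F(x_{k+1})$ can be large --- inherited from an earlier block (giving the term with $\int_x^{\i}w$) or attained locally (giving the term with $\int_0^x\big[\sup_{(0,t)}u\big]^qw\,dt$) --- which is precisely what forces both summands into $A_1$ and both quantities $B_1,B_2$ into case (ii). Keeping the resulting double sums convergent after interchanging the summation with the $L_q(w)$-integration is the delicate bookkeeping on which the whole argument rests.
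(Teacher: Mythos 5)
The first thing to note is that the paper does not prove this theorem at all: it is imported verbatim from Gogatishvili--Opic--Pick \cite{gop} (Theorems 4.1 and 4.4 there) as part of the toolbox assembled in Section \ref{s6}, so there is no in-paper argument to compare yours against. Judged on its own, your outline follows exactly the strategy of the cited source: lower bounds by testing \eqref{ISI.2} on $h_x=v^{1-\theta'}\chi_{(x,\i)}$, and upper bounds by a blocking/discretization of the non-decreasing function $F(t)=\sup_{\tau\in(0,t)}u(\tau)\int_\tau^{\i}h$. Your test-function computation is correct: $\int_\tau^{\i}h_x=v_\theta(x,\i)^{\theta'}$ for $\tau\le x$ and $\big(\int_0^{\i}h_x^{\theta}v\big)^{1/\theta}=v_\theta(x,\i)^{\theta'-1}$, so splitting the left-hand side at $t=x$ does produce both summands of $A_1$ and hence $c\gs A_1$ in every regime. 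You have also correctly identified the genuine obstruction, namely that the maximizing $\tau$ in the supremum is non-local relative to the discretizing blocks.

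That said, the proposal is an outline rather than a proof at precisely the two places where the real work lies. First, the case-(ii) lower bound is dispatched in one clause; to obtain $c\gs B_1+B_2$ one must assemble a single composite test function $h=\sum_k a_k h_{x_k}$ over a discretizing sequence, evaluate both sides of \eqref{ISI.2} up to equivalence, optimize over the coefficients $\{a_k\}$ via the duality between the sequence spaces with exponents $q$ and $\theta$ (which is where $r$ with $1/r=1/q-1/\theta$ enters), and then antidiscretize the resulting $\ell^r$ condition back into the integral forms $B_1$ and $B_2$; none of this is carried out. Second, in the upper bound you explicitly defer the interchange of the block sums with the $L_q(w)$ integration and the verification that the doubly-indexed sums collapse, while acknowledging that this is where the whole argument rests. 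A small further point: for $\theta=1$ the functions $v^{1-\theta'}\chi_{(x,\i)}$ are undefined and must be replaced by near-maximizers of $\esup_{(x,\i)}v^{-1}$. In short, your plan is the right one and matches how the result is actually proved in \cite{gop}, but as written it records the strategy rather than completing the two decisive steps.
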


\section{Embeddings Between $\cop_{p_1,q_1}(u_1,v_1)$ and $\ces_{p_2,q_2}(u_2,v_2)$}\label{s7}

In this section we characterize the embeddings between weighted Copson and Ces\`{a}ro function spaces.

From now on, we will denote
$$
v(x) : = v_1(x)^{-1} v_2(x),
\quad
V(x) : = \| v\|_{p_1 \rw p_2,(0,x)},
\quad \mbox{and} \quad
{\mathcal V}(t,x):= \frac{V(t)}{V(t)+V(x)}, ~ (t > 0,\,x > 0).
$$

\begin{lem}\label{triviality}
Let $0<p_1, p_2, q_1, q_2\le \i$ and $p_1<p_2$. Assume that $v_1,
v_2\in \W\I$, $u_1\in \dual{\O_{q_1}}$ and $u_2\in \O_{q_2}$. Then
$\cop_{p_1,q_1}(u_1,v_1)\not \hookrightarrow
\ces_{p_2,q_2}(u_2,v_2)$.
\begin{proof}
Assume that $\cop_{p_1,q_1}(u_1,v_1) \hookrightarrow \ces_{p_2,q_2}(u_2,v_2)$ holds. Then there exist $c>0$ such that
\begin{equation*}
\|f\|_{\ces_{p_2,q_2}(u_2,v_2)} \leq c \,\|f\|_{\cop_{p_1,q_1}(u_1,v_1)}
\end{equation*}
holds for all $f \in \M^+\I$. Let $\tau\in \I$ and $f=0$ on $(\tau,\i)$. Thus, we have
\begin{align}
\|f\|_{\ces_{p_2,q_2}(u_2,v_2)}&= \big\| \|f\|_{p_2,v_2,(0,\cdot)} \big\|_{q_2,u_2,\I}\notag\\
&\geq \big\| \|f\|_{p_2,v_2,(0,\cdot)} \big\|_{q_2,u_2,(\tau,\i)}\notag\\
&\geq \| u_2 \|_{q_2,(\tau,\i)} \|f\|_{p_2,v_2,(0,\tau)} \label{1}
\end{align}
and
\begin{align}
\|f\|_{\cop_{p_1,q_1}(u_1,v_1)}&= \big\| \|f\|_{p_1,v_1,(\cdot,\i)} \big\|_{q_1,u_1,\I}\notag\\
&\leq \big\| \|f\|_{p_1,v_1,(\cdot,\i)} \big\|_{q_1,u_1,(0,\tau)}\notag\\
&\leq \| u_1 \|_{q_1,(0,\tau)} \|f\|_{p_1,v_1,(0,\tau)} \label{2}.
\end{align}
Combining \eqref{1} with \eqref{2}, we can assert that
\begin{equation*}
\| u_2 \|_{q_2,(\tau,\i)} \|f\|_{p_2,v_2,(0,\tau)}\leq c  \| u_1 \|_{q_1,(0,\tau)} \|f\|_{p_1,v_1,(0,\tau)}.
\end{equation*}
Since $u_1\in \dual{\O_{q_1}}$ and $u_2\in \O_{q_2}$, we conclude that $L_{p_1}(v_1)\hookrightarrow L_{p_2}(v_2)$, which is a contradiction.
\end{proof}
\end{lem}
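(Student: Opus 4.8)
The plan is to argue by contradiction. Suppose $\cop_{p_1,q_1}(u_1,v_1)\hookrightarrow\ces_{p_2,q_2}(u_2,v_2)$, so that there is a constant $c>0$ with $\|f\|_{\ces_{p_2,q_2}(u_2,v_2)}\le c\,\|f\|_{\cop_{p_1,q_1}(u_1,v_1)}$ for every $f\in\M^+\I$. The key idea is to test this inequality only on functions supported in a bounded interval $(0,\tau)$: on such functions the Ces\`aro norm admits a clean lower bound and the Copson norm a clean upper bound, and after cancelling the common inner $L_{p_1}$/$L_{p_2}$ factors one is left with a purely local Lebesgue embedding that $p_1<p_2$ forbids.

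First I would fix $\tau\in\I$ and take $f$ with $f=0$ on $(\tau,\i)$. For the left-hand side I restrict the outer $q_2$-norm to $(\tau,\i)$; there the inner norm $\|f\|_{p_2,v_2,(0,t)}$ equals the constant $\|f\|_{p_2,v_2,(0,\tau)}$, so
\[
\|f\|_{\ces_{p_2,q_2}(u_2,v_2)}\ge \|u_2\|_{q_2,(\tau,\i)}\,\|f\|_{p_2,v_2,(0,\tau)}.
\]
For the right-hand side I use that $\|f\|_{p_1,v_1,(t,\i)}$ vanishes for $t\ge\tau$ and is $\le\|f\|_{p_1,v_1,(0,\tau)}$ for $t<\tau$, whence
\[
\|f\|_{\cop_{p_1,q_1}(u_1,v_1)}\le \|u_1\|_{q_1,(0,\tau)}\,\|f\|_{p_1,v_1,(0,\tau)}.
\]
Combining these with the assumed embedding gives
\[
\|u_2\|_{q_2,(\tau,\i)}\,\|f\|_{p_2,v_2,(0,\tau)}\le c\,\|u_1\|_{q_1,(0,\tau)}\,\|f\|_{p_1,v_1,(0,\tau)}.
\]

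Now the hypotheses $u_2\in\O_{q_2}$ and $u_1\in\dual{\O_{q_1}}$ guarantee that $\|u_2\|_{q_2,(\tau,\i)}$ and $\|u_1\|_{q_1,(0,\tau)}$ are both positive and finite, so dividing yields $\|f\|_{p_2,v_2,(0,\tau)}\le C(\tau)\,\|f\|_{p_1,v_1,(0,\tau)}$ for all $f$ supported in $(0,\tau)$; that is, the embedding $L_{p_1}(v_1)\hookrightarrow L_{p_2}(v_2)$ on the finite interval $(0,\tau)$.

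The hard part is to see that this last embedding is impossible when $p_1<p_2$, and this is the only place the strict inequality of the exponents enters. I would substitute $g=fv_1$ to rewrite it as boundedness of the pointwise multiplier $g\mapsto g\,(v_2/v_1)$ from $L_{p_1}(0,\tau)$ into $L_{p_2}(0,\tau)$. Since $v_2/v_1$ is positive and finite a.e., there is a set $A\subset(0,\tau)$ of positive (necessarily finite) measure on which $v_2/v_1$ is bounded above and below by positive constants; restricting to functions supported in $A$ then reduces the claim to $\|g\|_{p_2,A}\lesssim\|g\|_{p_1,A}$. Choosing $g$ with a power singularity $|x-x_0|^{-\alpha}$ at a Lebesgue density point $x_0$ of $A$, with $1/p_2<\alpha<1/p_1$ (a nonempty range precisely because $p_1<p_2$, with the convention $1/\i=0$ covering $p_2=\i$), produces a function lying in $L_{p_1}(A)$ but not in $L_{p_2}(A)$, the desired contradiction. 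I expect this concentration step, rather than the preceding norm manipulations, to be the crux; the estimates above are routine once the correct truncation to $(0,\tau)$ has been chosen.
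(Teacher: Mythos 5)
Your proposal is correct and follows essentially the same route as the paper: test the embedding on functions supported in $(0,\tau)$, bound the Ces\`aro norm from below by $\|u_2\|_{q_2,(\tau,\i)}\|f\|_{p_2,v_2,(0,\tau)}$ and the Copson norm from above by $\|u_1\|_{q_1,(0,\tau)}\|f\|_{p_1,v_1,(0,\tau)}$, and deduce a local embedding $L_{p_1}(v_1)\hookrightarrow L_{p_2}(v_2)$ on $(0,\tau)$ that is incompatible with $p_1<p_2$. The only difference is that you spell out why this last embedding fails (localizing to a set where $v_2/v_1$ is comparable to a constant and using a power singularity $|x-x_0|^{-\alpha}$ with $1/p_2<\alpha<1/p_1$), a step the paper simply asserts as "a contradiction"; your added detail is correct and welcome.
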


\begin{thm}
Let $0<p_1=q_1<\i$, $0<p_2=q_2<\i$, $v_1, v_2\in \W\I$, $u_1\in \dual{\O_{q_1}}$ and $u_2 \in \O_{q_2}$.
Then
\begin{equation*}
\|\Id\|_{\cop_{p_1,q_1}(u_1,v_1)\rw \ces_{p_2,q_2}(u_2,v_2)}\ap \bigg\| \|u_1\|_{p_1,(0,\cdot)}^{- 1} \|u_2\|_{p_2,(\cdot,\infty)} \bigg\|_{p_1 \rw p_2,v,\I}.
\end{equation*}
\end{thm}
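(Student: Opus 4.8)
The plan is to exploit the fact that on the diagonal $p=q$ both the Copson and the Ces\`{a}ro space collapse to weighted Lebesgue spaces, so that the embedding reduces to a pointwise multiplier inequality between two $L_p$-spaces. First I would apply Lemma~\ref{Coppp} with $p=p_1=q_1$ (which is legitimate since $u_1\in\dual{\O_{q_1}}=\dual{\O_{p_1}}$) to rewrite the domain as
$$
\cop_{p_1,q_1}(u_1,v_1)=L_{p_1}(w_1),\qquad w_1(x)=v_1(x)\|u_1\|_{p_1,(0,x)}.
$$
Symmetrically, Lemma~\ref{Cespp} with $p=p_2=q_2$ (using $u_2\in\O_{q_2}=\O_{p_2}$) identifies the target as
$$
\ces_{p_2,q_2}(u_2,v_2)=L_{p_2}(w_2),\qquad w_2(x)=v_2(x)\|u_2\|_{p_2,(x,\i)}.
$$
Consequently $\|\Id\|_{\cop_{p_1,q_1}(u_1,v_1)\rw\ces_{p_2,q_2}(u_2,v_2)}=\|\Id\|_{L_{p_1}(w_1)\rw L_{p_2}(w_2)}$, and the problem is reduced to computing the best constant of a weighted Lebesgue embedding.

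Next I would evaluate this embedding constant. Writing the defining inequality $\|f w_2\|_{p_2,\I}\le c\,\|f w_1\|_{p_1,\I}$ and substituting $g=f w_1$, it becomes the multiplier inequality $\|g\,(w_2/w_1)\|_{p_2,\I}\le c\,\|g\|_{p_1,\I}$. In the relevant range $p_2\le p_1$ (by Lemma~\ref{triviality} the embedding is nontrivial only then, and here $q_2=p_2\le p_1$ is exactly the standing hypothesis $p_2\le\min\{p_1,q_2\}$), H\"older's inequality with exponents $p_1/p_2$ and $(p_1/p_2)'$ gives the upper bound, since one checks $p_2\,(p_1/p_2)'=p_1\rw p_2$. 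The matching lower bound comes from the sharpness of H\"older: when $p_2<p_1$ the choice $g=(w_2/w_1)^{(p_1\rw p_2)/p_1}$ produces equality, while the boundary case $p_2=p_1$ (where $p_1\rw p_2=\i$) is handled by testing with functions concentrated where $w_2/w_1$ is nearly maximal. Both regimes are captured uniformly by the index $p_1\rw p_2$, and one obtains
$$
\|\Id\|_{L_{p_1}(w_1)\rw L_{p_2}(w_2)}\ap\Big\|\tfrac{w_2}{w_1}\Big\|_{p_1\rw p_2,\I}.
$$

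Finally I would insert the explicit weights. Recalling $v=v_1^{-1}v_2$,
$$
\frac{w_2(x)}{w_1(x)}=\frac{v_2(x)}{v_1(x)}\cdot\frac{\|u_2\|_{p_2,(x,\i)}}{\|u_1\|_{p_1,(0,x)}}
=v(x)\,\|u_1\|_{p_1,(0,x)}^{-1}\|u_2\|_{p_2,(x,\i)},
$$
so that pulling the factor $v$ into the weighted quasi-norm, i.e.\ using $\|\cdot\|_{p_1\rw p_2,v,\I}=\|(\cdot)\,v\|_{p_1\rw p_2,\I}$, yields exactly the asserted expression $\big\|\,\|u_1\|_{p_1,(0,\cdot)}^{-1}\|u_2\|_{p_2,(\cdot,\i)}\big\|_{p_1\rw p_2,v,\I}$. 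The entire argument is essentially a reduction to a classical fact, and the only delicate point is the exactness of the weighted Lebesgue embedding constant as $\|w_2/w_1\|_{p_1\rw p_2}$ --- in particular its lower bound, where the two sub-cases $p_2<p_1$ and $p_2=p_1$ must be treated by slightly different extremizers.
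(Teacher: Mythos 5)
Your proposal is correct and follows essentially the same route as the paper: both reduce the problem via Lemmas~\ref{Coppp} and \ref{Cespp} to the embedding $L_{p_1}(w_1)\hra L_{p_2}(w_2)$ with $w_1=v_1\|u_1\|_{p_1,(0,\cdot)}$ and $w_2=v_2\|u_2\|_{p_2,(\cdot,\i)}$. The paper stops at that reduction, leaving the classical identity $\|\Id\|_{L_{p_1}(w_1)\rw L_{p_2}(w_2)}=\|w_2/w_1\|_{p_1\rw p_2,\I}$ implicit, whereas you also write out the H\"older argument and its extremizers, which is a harmless elaboration of the same proof.
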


\begin{proof}
In view of Lemmas \ref{Cespp} and \ref{Coppp}, we have that
$$
\|\Id\|_{\cop_{p_1,q_1}(u_1,v_1)\rw \ces_{p_2,q_2}(u_2,v_2)} = \|\Id\|_{L_{p_1}(w_1) \rw L_{p_2}(w_2)}
$$
with $w_1(x) = v_1(x)\|u_1\|_{p_1,(0,x)}$ and $w_2(x) = v_2(x)\|u_2\|_{p_2,(x,\infty)}$, $x>0$.
\end{proof}

\begin{thm}
Let $0<p_1,p_2,q_1,q_2<\i$, $p_1=q_1$ and $p_2\neq q_2$. Let  $v_1, v_2\in \W\I$, $u_1\in \dual{\O_{q_1}}$ and $u_2 \in \O_{q_2}$.

{\rm (i)} If $p_1\le q_2$, then
\begin{equation*}
\|\Id\|_{\cop_{p_1,q_1}(u_1,v_1)\rw \ces_{p_2,q_2}(u_2,v_2)} \ap \sup_{t \in \I} \big\| \|u_1\|_{p_1,(0,\cdot)}^{-1} \big\|_{p_1 \rw p_2,v,(0,t)}\|u_2\|_{q_2,(t,\infty)},
\end{equation*}

{\rm (ii)}  If $q_2 < p_1$, then
\begin{align*}
\|\Id\|_{\cop_{p_1,q_1}(u_1,v_1)\rw \ces_{p_2,q_2}(u_2,v_2)} \ap \bigg(\int_{(0,\infty)} \big\| \|u_1\|_{p_1,(0,\cdot)}^{-1} \big\|_{p_1 \rw p_2,v,(0,t)}^{p_1 \rw q_2} d \,\bigg(- \|u_2\|_{q_2,(t,\infty)}^{p_1 \rw q_2} \bigg)\bigg)^{\frac{1}{p_1 \rw q_2}}.
\end{align*}
\end{thm}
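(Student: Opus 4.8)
The plan is to exploit the hypothesis $p_1 = q_1$, which forces the Copson space on the left to collapse into a weighted Lebesgue space and thereby reduces the embedding to one already characterized in Section~\ref{s4}. First I would apply Lemma~\ref{Coppp} with $p = p_1 = q_1$: since $u_1 \in \dual{\O_{q_1}}$, it yields $\cop_{p_1,q_1}(u_1,v_1) = L_{p_1}(w_1)$ with $w_1(x) = v_1(x)\|u_1\|_{p_1,(0,x)}$, $x > 0$. Consequently
\[
\|\Id\|_{\cop_{p_1,q_1}(u_1,v_1) \rw \ces_{p_2,q_2}(u_2,v_2)} = \|\Id\|_{L_{p_1}(w_1) \rw \ces_{p_2,q_2}(u_2,v_2)},
\]
so the entire problem is transferred to embedding \eqref{emb1}.

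Next I would invoke Theorem~\ref{Emb-Lp-Ces} for the right-hand embedding, taking its first weight to be $w_1$, its inner Ces\`{a}ro weight $v_2$, its outer weight $u_2 \in \O_{q_2}$, and its exponents $p_2 \le p_1$ and $q = q_2$. The two regimes there, namely $p_1 \le q$ and $q < p_1$, coincide with the cases (i) $p_1 \le q_2$ and (ii) $q_2 < p_1$ of the present statement, and they produce, respectively, the supremum expression and the Lebesgue--Stieltjes integral expression, but written through the auxiliary weight $w_1^{-1}v_2$.

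The remaining step is purely computational: I must recast the factor $\|w_1^{-1}v_2\|_{p_1 \rw p_2,(0,t)}$ coming out of Theorem~\ref{Emb-Lp-Ces} in the form declared in the theorem. Since $w_1^{-1}v_2 = v_1^{-1}v_2\,\|u_1\|_{p_1,(0,\cdot)}^{-1} = v\,\|u_1\|_{p_1,(0,\cdot)}^{-1}$, with $v = v_1^{-1}v_2$ the weight fixed at the beginning of Section~\ref{s7}, the unweighted norm of $w_1^{-1}v_2$ is exactly the $v$-weighted norm of $\|u_1\|_{p_1,(0,\cdot)}^{-1}$, i.e.
\[
\big\|w_1^{-1}v_2\big\|_{p_1 \rw p_2,(0,t)} = \big\|\,\|u_1\|_{p_1,(0,\cdot)}^{-1}\,\big\|_{p_1 \rw p_2,v,(0,t)}.
\]
Substituting this identity into the two formulas delivered by Theorem~\ref{Emb-Lp-Ces} gives precisely (i) and (ii).

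I do not expect a genuine analytic obstacle, because the difficult weighted Hardy-inequality characterizations are already packaged inside Theorem~\ref{Emb-Lp-Ces}; the only delicate points are the correct matching of the exponents and the weighted-norm identity above. The single hypothesis that must be monitored is $p_2 \le p_1$, needed to apply Theorem~\ref{Emb-Lp-Ces}: in the complementary range $p_1 < p_2$, Lemma~\ref{triviality} already shows $\cop_{p_1,q_1}(u_1,v_1) \not\hra \ces_{p_2,q_2}(u_2,v_2)$, so both sides of the claimed equivalence are infinite and nothing remains to prove. Thus all the substance is inherited from the $L_{p_1}(w_1) \hra \ces_{p_2,q_2}(u_2,v_2)$ characterization.
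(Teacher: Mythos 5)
Your proposal is correct and coincides with the paper's own proof: the authors likewise invoke Lemma~\ref{Coppp} to write $\cop_{p_1,q_1}(u_1,v_1)=L_{p_1}(w_1)$ with $w_1(x)=v_1(x)\|u_1\|_{p_1,(0,x)}$ and then apply Theorem~\ref{Emb-Lp-Ces}, the weight identity $\|w_1^{-1}v_2\|_{p_1\rw p_2,(0,t)}=\big\|\,\|u_1\|_{p_1,(0,\cdot)}^{-1}\big\|_{p_1\rw p_2,v,(0,t)}$ being exactly the bookkeeping you describe. Your remark on the implicit hypothesis $p_2\le p_1$ matches the paper's use of Lemma~\ref{triviality}.
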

\begin{proof}
In view of Lemma~\ref{Coppp}, we have that
$$
\|\Id\|_{\cop_{p_1,q_1}(u_1,v_1)\rw \ces_{p_2,q_2}(u_2,v_2)} = \|\Id\|_{L_{p_1}(w_1) \rw \ces_{p_2,q_2}(u_2,v_2)}
$$
with $w_1(x) = v_1(x)\|u_1\|_{p_1,(0,x)}$, $x>0$. Then the result follows from Theorem~\ref{Emb-Lp-Ces}.
\end{proof}

\begin{thm}
Let $0<p_1,p_2,q_1,q_2<\i$, $p_1 \neq q_1$ and $p_2= q_2$. Let  $v_1, v_2\in \W\I$, $u_1\in \dual{\O_{q_1}}$ and $u_2 \in \O_{q_2}$.

{\rm (i)} If $q_1 \le p_2$, then
\begin{equation*}
\|\Id\|_{\cop_{p_1,q_1}(u_1,v_1)\rw \ces_{p_2,q_2}(u_2,v_2)} \ap \sup_{t \in \I}  \|u_1\|_{q_1,(0,t)}^{-1} \,\big\| \|u_2\|_{p_2,(\cdot,\infty)} \big\|_{p_1 \rw p_2},
\end{equation*}

{\rm (ii)} If $p_2<q_1$ then
\begin{align*}
\|\Id\|_{\cop_{p_1,q_1}(u_1,v_1)\rw \ces_{p_2,q_2}(u_2,v_2)} \ap & \, \bigg(\int_0^\i \big\| \|u_2\|_{p_2,(\cdot,\infty)}\big\|_{p_1 \rw p_2,v,(0,t)}^{q_1 \rw p_2}  d \,\bigg( -\|u_1\|_{q_1,(0,t)}^{- q_1 \rw p_2}\bigg)\bigg)^{\frac{1}{q_1 \rw p_2}} \\
& + \|u_1\|_{q_1,(0,\infty)}^{-1} \,\big\| \|u_2\|_{p_2,(\cdot,\infty)}\big\|_{p_1 \rw p_2,v,(0,\infty)}.
\end{align*}
\end{thm}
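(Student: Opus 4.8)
The plan is to collapse the Ces\`aro target to a weighted Lebesgue space by exploiting the hypothesis $p_2=q_2$, and then to read off both formulas from the already-established characterization of Copson-into-Lebesgue embeddings in Theorem~\ref{Emb-Cop-Lp}; this mirrors exactly the strategy of the two preceding theorems, with the roles of the Ces\`aro and Copson factors interchanged. First I would invoke Lemma~\ref{Cespp}: since $u_2\in\O_{q_2}$ and $p_2=q_2$, it gives $\ces_{p_2,q_2}(u_2,v_2)=L_{p_2}(w_2)$ with $w_2(x):=v_2(x)\|u_2\|_{p_2,(x,\i)}$, and $w_2\in\W\I$ because $\|u_2\|_{p_2,(x,\i)}$ is positive and finite for every $x>0$. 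Hence
\[
\|\Id\|_{\cop_{p_1,q_1}(u_1,v_1)\rw\ces_{p_2,q_2}(u_2,v_2)}
=\|\Id\|_{\cop_{p_1,q_1}(u_1,v_1)\rw L_{p_2}(w_2)},
\]
which is precisely an embedding of the type treated in Theorem~\ref{Emb-Cop-Lp}.

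Next I would apply Theorem~\ref{Emb-Cop-Lp} through the dictionary in which its inner Copson exponent is $p_1$, its outer Copson exponent is $q_1$, its Copson weights are $u_1$ (outer) and $v_1$ (inner), its target Lebesgue exponent is $p_2$, and its target Lebesgue weight is $w_2$. Under this relabelling the hypothesis $0<p_1\le p_2\le\i$ of that theorem becomes the requirement $p_2\le p_1$, which is the standing assumption of the section (the complementary case $p_1<p_2$ yields only trivial embeddings by Lemma~\ref{triviality}). The condition $q\le p_1$ of Theorem~\ref{Emb-Cop-Lp}(i) reads $q_1\le p_2$, matching case~(i) here, while $p_1<q$ of part~(ii) reads $p_2<q_1$, matching case~(ii). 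The weight $v_1v_2^{-1}$ appearing there becomes $w_2v_1^{-1}=v_1^{-1}v_2\,\|u_2\|_{p_2,(\cdot,\i)}=v\,\|u_2\|_{p_2,(\cdot,\i)}$, and absorbing the factor $v$ into the weight of the quasi-norm turns $\|w_2v_1^{-1}\|_{s,A}$ into $\big\|\,\|u_2\|_{p_2,(\cdot,\i)}\,\big\|_{s,v,A}$; substituting this into the two branches of Theorem~\ref{Emb-Cop-Lp} produces exactly the stated right-hand sides.

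The only genuine care needed is the bookkeeping of exponents and intervals, and this is where a reader is most likely to slip. I would check that the inner gluing index is the one with reciprocal $\tfrac1{p_2}-\tfrac1{p_1}$, which is finite precisely because $p_2\le p_1$ and which the paper records as $p_1\rw p_2$, while the outer integration index has reciprocal $\tfrac1{p_2}-\tfrac1{q_1}$, recorded as $q_1\rw p_2$ (finite in case~(ii)). I would also note that, since $q_1<\i$, one has $\|u_1\|_{q_1,(0,t+)}=\|u_1\|_{q_1,(0,t)}$ by Remark~\ref{R:5.5}, which is why the left-continuous refinement in Theorem~\ref{Emb-Cop-Lp}(ii) collapses to $\|u_1\|_{q_1,(0,t)}$ in the statement. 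Beyond this, there is no deep obstacle: the result is a direct corollary of Lemma~\ref{Cespp} and Theorem~\ref{Emb-Cop-Lp}, and the entire content lies in the correct identification of $w_2$ and the consistent translation of parameters and weights.
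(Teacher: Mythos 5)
Your proposal is correct and coincides with the paper's own argument: the paper likewise invokes Lemma~\ref{Cespp} to rewrite $\ces_{p_2,q_2}(u_2,v_2)$ as $L_{p_2}(w_2)$ with $w_2(x)=v_2(x)\|u_2\|_{p_2,(x,\infty)}$ and then cites Theorem~\ref{Emb-Cop-Lp}. Your extra bookkeeping (the implicit standing assumption $p_2\le p_1$ via Lemma~\ref{triviality}, the identification $w_2v_1^{-1}=v\,\|u_2\|_{p_2,(\cdot,\infty)}$, and the collapse of $\|u_1\|_{q_1,(0,t+)}$ to $\|u_1\|_{q_1,(0,t)}$ for $q_1<\infty$) is exactly the translation the paper leaves to the reader.
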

\begin{proof}
In view of Lemma~\ref{Cespp}, we have that
$$
\|\Id\|_{\cop_{p_1,q_1}(u_1,v_1)\rw \ces_{p_2,q_2}(u_2,v_2)} =
\|\Id\|_{\cop_{p_1,q_1}(u_1,v_1)\rw L_{p_2}(w_2)}
$$
with $w_2(x) = v_2(x)\|u_2\|_{p_2,(x,\infty)}$, $x>0$. Then the result follows from Theorem~\ref{Emb-Cop-Lp}.
\end{proof}

The following lemma is true.
\begin{lem}\label{mainlemma}
Let $0<p_1, p_2, q_1, q_2<\i$, $p_2\le p_1$ and $p_2<q_2$. Let $v_1, v_2\in \W\I$, $u_1 \in \dual{\O_{q_1}}$ and $u_2\in \O_{q_2}$.
Then
$$
\|\Id\|_{\cop_{p_1,q_1}(u_1,v_1)\rw \ces_{p_2,q_2}(u_2,v_2)} = \left\{\sup_{g\in \M^+\I} \ddfrac{\big\|\Id\big\|_{\cop_{p_1,q_1}(u_1,v_1)\rw L_{p_2}\big(v_2 H^*(g)^{\frac{1}{p_2}}\big)}^{p_2}}{\|g\|_{\frac{q_2}{q_2-p_2},u_2^{-p_2},\I}} \right\}^{\frac{1}{p_2}}.
$$
\end{lem}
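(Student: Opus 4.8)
The plan is to reduce the statement to the duality of Lebesgue spaces combined with Fubini's theorem. The crucial observation is that, since $p_2 < q_2$, the exponent $s := q_2 / p_2$ satisfies $s > 1$, so the $L_s$--$L_{s'}$ duality is available with $s' = q_2 / (q_2 - p_2) \in (1,\i)$. First I would raise the Ces\`aro norm to the power $p_2$ and recognize it as a weighted $L_s$ norm of the inner average: writing $F(t) := \int_0^t f(\tau)^{p_2} v_2(\tau)^{p_2}\,d\tau$, one has
\begin{equation*}
\|f\|_{\ces_{p_2,q_2}(u_2,v_2)}^{p_2} = \bigg(\int_0^{\i} F(t)^{\frac{q_2}{p_2}} u_2(t)^{q_2}\,dt\bigg)^{\frac{p_2}{q_2}} = \big\| F\, u_2^{p_2}\big\|_{s,\I},
\end{equation*}
since $\big(F(t) u_2(t)^{p_2}\big)^s = F(t)^{q_2/p_2} u_2(t)^{q_2}$.

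Next I would apply the $L_s$ duality to the non-negative function $F\, u_2^{p_2}$ and then absorb the weight $u_2$ into the test function by the substitution $g \mapsto g\, u_2^{-p_2}$, which is a bijection of $\M^+\I$ onto itself because $u_2$ is positive and finite a.e. This gives
\begin{equation*}
\|f\|_{\ces_{p_2,q_2}(u_2,v_2)}^{p_2} = \sup_{g \in \M^+\I} \ddfrac{\int_0^{\i} F(t)\, g(t)\,dt}{\|g\|_{\frac{q_2}{q_2-p_2},u_2^{-p_2},\I}}.
\end{equation*}
Fubini's theorem then turns the numerator into an integral against the Copson operator $H^*$:
\begin{equation*}
\int_0^{\i} F(t)\, g(t)\,dt = \int_0^{\i} f(\tau)^{p_2} v_2(\tau)^{p_2} (H^*g)(\tau)\,d\tau = \big\| f \big\|_{p_2, v_2 (H^* g)^{1/p_2},\I}^{p_2},
\end{equation*}
which identifies the numerator as the $p_2$-th power of an $L_{p_2}$ norm with precisely the inner weight $v_2 (H^*g)^{1/p_2}$ appearing in the statement.

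Finally I would substitute this into the definition of the operator norm. Dividing by $\|f\|_{\cop_{p_1,q_1}(u_1,v_1)}^{p_2}$, taking $\sup_f$, and interchanging the two suprema over $f$ and $g$, the inner supremum over $f$ becomes exactly $\|\Id\|_{\cop_{p_1,q_1}(u_1,v_1)\rw L_{p_2}(v_2 H^*(g)^{1/p_2})}^{p_2}$; taking the $p_2$-th root then yields the claimed identity. The step requiring the most care is the duality itself: I must ensure that it produces an exact equality rather than a mere equivalence, and that it remains valid despite the possible unboundedness of the weights $u_2^{\pm p_2}$ and of $H^* g$. Here the fact that $F\, u_2^{p_2}$ is genuinely non-negative, together with the conventions of Convention~\ref{Notat.and.prelim.conv.1.1}, guarantees that both the duality and the Fubini interchange go through without loss. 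The interchange of the two suprema is harmless, since both $f$ and $g$ range over $\M^+\I$. I note that only the hypothesis $p_2 < q_2$ is essential for this reduction, the remaining assumptions serving merely to ensure that the spaces involved are well defined.
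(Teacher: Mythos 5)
Your proposal is correct and takes essentially the same route as the paper's own proof: the $L_{q_2/p_2}$--$L_{q_2/(q_2-p_2)}$ duality applied to the inner integral (with the weight $u_2^{-p_2}$ absorbed into the test function), Fubini's theorem to produce $H^*g$, and an interchange of the suprema over $f$ and $g$. Your closing observation that only $p_2<q_2$ is needed for the reduction is also consistent with the paper, which uses the remaining hypotheses only in the subsequent theorems built on this lemma.
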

\begin{proof}
By duality, interchanging suprema, we have that
\begin{align*}
\|\Id\|_{\cop_{p_1,q_1}(u_1,v_1)\rw \ces_{p_2,q_2}(u_2,v_2)} & \\
& \hspace{-3cm} =\sup_{f\in \M^+\I} \ddfrac{\|f\|_{\ces_{p_2,q_2}(u_2,v_2)}}{\|f\|_{\cop_{p_1,q_1}(u_1,v_1)}} \\
& \hspace{-3cm} = \sup_{f\in \M^+\I} \ddfrac{1}{\|f\|_{\cop_{p_1,q_1}(u_1,v_1)}} \sup_{g\in \M^+\I} \ddfrac{\bigg(\int_0^\i \bigg(\int_0^{\tau} f(x)^{p_2}v_2(x)^{p_2}\,dx\bigg)\,g(\tau)\,d\tau\bigg)^{\frac{1}{p_2}}} {\|g\|_{\frac{q_2}{q_2-p_2},u_2^{-p_2},\I}^{\frac{1}{p_2}}}\\
& \hspace{-3cm} = \sup_{g\in \M^+\I} \ddfrac{1}{\|g\|_{\frac{q_2}{q_2-p_2},u_2^{-p_2},\I}^{\frac{1}{p_2}}} \sup_{f\in \M^+\I} \ddfrac{\bigg(\int_0^\i \bigg(\int_0^{\tau} f(x)^{p_2}v_2(x)^{p_2}\,dx\bigg)\,g(\tau)\,d\tau\bigg)^{\frac{1}{p_2}}} {\|f\|_{\cop_{p_1,q_1}(u_1,v_1)}}.
\end{align*}
Applying the Fubini's Theorem, we get that
\begin{align}
\|\Id\|_{\cop_{p_1,q_1}(u_1,v_1)\rw \ces_{p_2,q_2}(u_2,v_2)} & \notag \\
& \hspace{-3cm} = \sup_{g\in \M^+\I} \ddfrac{1}{\|g\|_{\frac{q_2}{q_2-p_2},u_2^{-p_2},\I}^{\frac{1}{p_2}}} \sup_{f\in \M^+\I} \ddfrac{\bigg(\int_0^\i f(x)^{p_2}v_2(x)^{p_2} \bigg(\int_x^\i g(\tau)\,d\tau\bigg) dx \bigg)^{\frac{1}{p_2}}} {\|f\|_{\cop_{p_1,q_1}(u_1,v_1)}}\notag\\
& \hspace{-3cm} = \sup_{g\in \M^+\I} \ddfrac{1}{\|g\|_{\frac{q_2}{q_2-p_2},u_2^{-p_2},\I}^{\frac{1}{p_2}}} \,\|\Id\|_{\cop_{p_1,q_1}(u_1,v_1)\rw L_{p_2}\big(v_2 (\cdot) H^*(g)^{\frac{1}{p_2}}\big)}.\label{GommeninIlkDenkligi}
\end{align}
\end{proof}

\begin{thm}\label{maintheorem1}
Let $0<p_1, p_2, q_1, q_2<\i$, $p_2 < p_1$, $q_1 \le p_2 < q_2$. Let $v_1, v_2\in \W\I$, $u_1 \in \dual{\O_{q_1}}$ and $u_2\in \O_{q_2}$.
Assume that $V$ is admissible and
$$
\vp_1(x):= \esup_{t\in \I} V(t){\mathcal V}(x,t)
\|u_1\|_{q_1,(0,t)}^{-1} \in Q_{V^{\frac{1}{p_1 \rw p_2}}}.
$$

{\rm (i)} If $p_1\le q_2$, then
\begin{equation*}
\|\Id\|_{\cop_{p_1,q_1}(u_1,v_1)\rw \ces_{p_2,q_2}(u_2,v_2)}\ap \sup_{x\in \I} \vp_1(x) \sup_{t\in \I} {\mathcal V}(t,x) \|u_2\|_{q_2,(t,\infty)}.
\end{equation*}

{\rm (ii)} If $q_2<p_1$, then
\begin{equation*}
\|\Id\|_{\cop_{p_1,q_1}(u_1,v_1)\rw \ces_{p_2,q_2}(u_2,v_2)}\ap
\sup_{x\in \I} \vp_1(x) \bigg(\int_0^\i {\mathcal V}(t,x)^{p_1 \rw
q_2}  d\bigg( - \|u_2\|_{q_2,(t,\infty)}^{p_1 \rw q_2}\bigg)
\bigg)^{\frac{1}{p_1 \rw q_2}}.
\end{equation*}
\end{thm}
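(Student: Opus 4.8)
The plan is to reduce the bilinear embedding to a one‑parameter family of one–weight Copson inequalities by combining the duality of Lemma~\ref{mainlemma} with the reverse Copson embedding of Theorem~\ref{Emb-Cop-Lp}(i). Since $p_2<p_1$ and $p_2<q_2$, the hypotheses of Lemma~\ref{mainlemma} hold, so it is enough to evaluate, for each fixed $g\in\M^+\I$, the inner constant $\|\Id\|_{\cop_{p_1,q_1}(u_1,v_1)\rw L_{p_2}(v_2 H^*(g)^{1/p_2})}$ and then optimise over $g$. Because $q_1\le p_2\le p_1$, Theorem~\ref{Emb-Cop-Lp}(i) applies to this inner constant and gives, with $w_1:=v_2 H^*(g)^{1/p_2}$ and $w_1 v_1^{-1}=v\,H^*(g)^{1/p_2}$,
$$
\|\Id\|_{\cop_{p_1,q_1}(u_1,v_1)\rw L_{p_2}(w_1)}\ap \sup_{t\in\I}\big\|v\,H^*(g)^{1/p_2}\big\|_{p_1\rw p_2,(0,t)}\,\|u_1\|_{q_1,(0,t)}^{-1}.
$$
The equivalence constants originate from Theorem~\ref{ReverseCopsonIneq} and depend only on the exponents, hence are independent of $g$; this uniformity is what later licenses the interchange of suprema.

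The decisive observation is that raising the inner norm to the power $p_2$ linearises it in $g$: with $m:=\tfrac{p_1}{p_1-p_2}$ one has $\big\|v\,H^*(g)^{1/p_2}\big\|_{p_1\rw p_2,(0,t)}^{p_2}=\big\|v^{p_2}H^*(g)\big\|_{m,(0,t)}$, which is a seminorm in $g$. Substituting the displayed equivalence into Lemma~\ref{mainlemma} and interchanging the now separable suprema over $g$ and $t$ yields
$$
\|\Id\|_{\cop_{p_1,q_1}(u_1,v_1)\rw\ces_{p_2,q_2}(u_2,v_2)}^{p_2}\ap \sup_{t\in\I}\|u_1\|_{q_1,(0,t)}^{-p_2}\;\sup_{g\in\M^+\I}\frac{\big\|v^{p_2}H^*(g)\big\|_{m,(0,t)}}{\|g\|_{\frac{q_2}{q_2-p_2},u_2^{-p_2},\I}}.
$$
For each fixed $t$ the inner supremum is precisely the norm of the Copson operator $H^*$ from $L_{q_2/(q_2-p_2)}(u_2^{-p_2})$ into $L_m$ with weight $v^{p_2}\chi_{(0,t)}$, so it is governed by Theorem~\ref{CopsonIneq}. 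A short computation shows the two regimes of that theorem are separated by the sign of $q_2-p_1$: one has $\tfrac{q_2}{q_2-p_2}\le m$ exactly when $p_1\le q_2$, and $m<\tfrac{q_2}{q_2-p_2}$ exactly when $q_2<p_1$, matching the two cases of the statement.

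In case~(i), $p_1\le q_2$, Theorem~\ref{CopsonIneq}(i) evaluates the inner supremum as $\big(\sup_{\tau}\|u_2\|_{q_2,(\tau,\infty)}V(\min(\tau,t))\big)^{p_2}$, using $\|u_2^{p_2}\|_{q_2/p_2,(\tau,\infty)}=\|u_2\|_{q_2,(\tau,\infty)}^{p_2}$ and $\|v^{p_2}\chi_{(0,t)}\|_{m,(0,\tau)}=V(\min(\tau,t))^{p_2}$; taking the $p_2$‑th root gives
$$
\|\Id\|\ap \sup_{t,\tau}\|u_1\|_{q_1,(0,t)}^{-1}\,\|u_2\|_{q_2,(\tau,\infty)}\,V(\min(\tau,t)).
$$
In case~(ii), $q_2<p_1$, Theorem~\ref{CopsonIneq}(ii) replaces the inner supremum by the corresponding Lebesgue–Stieltjes integral, producing the factor $\big(\int_0^t\|u_2\|_{q_2,(\tau,\infty)}^{p_1\rw q_2}\,d\big(V(\tau)^{p_1\rw q_2}\big)\big)^{1/(p_1\rw q_2)}$, after identifying $\tfrac1r=\tfrac1m-\tfrac{q_2-p_2}{q_2}$ with $\tfrac{p_2}{p_1\rw q_2}$.

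The last and most delicate step is to recast these raw two‑variable expressions into the advertised form built from $\vp_1$ and $\mathcal V$. Here the admissibility of $V$ and the assumption $\vp_1\in Q_{V^{1/(p_1\rw p_2)}}$ enter: they validate the elementary equivalence $\min(V(\tau),V(x))\ap V(\tau)\mathcal V(x,\tau)$ together with the accompanying integration‑by‑parts identity, so that $\sup_t\|u_1\|_{q_1,(0,t)}^{-1}V(\min(\cdot,t))$ can be replaced by the fundamental function $\vp_1$ and the $u_2$‑factor by $\sup_t\mathcal V(t,x)\|u_2\|_{q_2,(t,\infty)}$ (respectively its integral analogue), exactly in the spirit of the iterated Hardy conditions of Section~\ref{s6}. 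I expect this bookkeeping — verifying that the separated, $x$‑coupled form is equivalent to the two‑variable supremum/integral, and that the quasiconcavity hypothesis makes the fundamental‑function representation non‑degenerate — to be the main obstacle, the earlier reductions being essentially mechanical applications of the quoted theorems.
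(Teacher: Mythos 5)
Your proposal is correct in substance but takes a genuinely different route after the first two (shared) steps. Both you and the paper begin with Lemma~\ref{mainlemma} followed by Theorem~\ref{Emb-Cop-Lp}(i), arriving at
$\|\Id\|^{p_2}\ap\sup_{g}\big(\sup_{t}\|u_1\|_{q_1,(0,t)}^{-p_2}\|H^*g\|_{\frac{p_1}{p_1-p_2},v^{p_2},(0,t)}\big)/\|g\|_{\frac{q_2}{q_2-p_2},u_2^{-p_2},\I}$.
At this point the paper simply quotes the iterated Hardy characterization (Theorem~\ref{IterHar3.2.0}, cases (i) and (ii)), which delivers the answer already in the separated form involving $\vp_1$ and $\mathcal V$. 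You instead observe that, because $q_1\le p_2$ forces the outer functional to be a supremum, the two suprema over $g$ and $t$ commute exactly (both equal the supremum over the product set, and the uniformity of the equivalence constants in $g$ is what lets you substitute the equivalent expression before interchanging); this reduces the problem, for each fixed $t$, to the classical weighted Copson inequality of Theorem~\ref{CopsonIneq}, and your identification of the case split $\tfrac{q_2}{q_2-p_2}\le\tfrac{p_1}{p_1-p_2}\Leftrightarrow p_1\le q_2$ and of $r$ via $\tfrac1r=\tfrac{p_2}{p_1\rw q_2}$ is correct. What your route buys is self-containedness (no appeal to \cite{gmp}); what it costs is that you must still prove the final equivalence between your two-variable expressions and the stated $x$-coupled form, which you only sketch. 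That equivalence does hold and is elementary: in case (i) both quantities reduce, after splitting $\min(V(\tau),V(t))$ according to $\tau\le t$ or $\tau>t$ and using the monotonicity of $V$ and of $\|u_2\|_{q_2,(\cdot,\infty)}$, to $\sup_{t}\|u_1\|_{q_1,(0,t)}^{-1}\sup_{\tau\in(0,t)}V(\tau)\|u_2\|_{q_2,(\tau,\infty)}$; in case (ii) one integrates by parts in $\int_0^\infty\mathcal V(s,x)^{p_1\rw q_2}\,d\big(-\|u_2\|_{q_2,(s,\infty)}^{p_1\rw q_2}\big)$ and uses that $G(x):=\big(\int_0^x\|u_2\|_{q_2,(\tau,\infty)}^{p_1\rw q_2}\,dV^{p_1\rw q_2}\big)^{1/(p_1\rw q_2)}$ satisfies $V(s)G(x)\ls V(x)G(s)$ for $s\le x$, whence both sides are comparable to $\sup_t\|u_1\|_{q_1,(0,t)}^{-1}G(t)$. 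Supplying these few lines would make your argument a complete, and arguably more transparent, alternative proof; this verification is precisely the work that the paper's citation of the iterated Hardy theorem performs for the authors.
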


\begin{proof}
By Lemma \ref{mainlemma}, we have that
\begin{align*}
\|\Id\|_{\cop_{p_1,q_1}(u_1,v_1)\rw \ces_{p_2,q_2}(u_2,v_2)}  = \sup_{g\in \M^+\I} \frac{1}{\|g\|_{\frac{q_2}{q_2-p_2},u_2^{-p_2},\I}^{\frac{1}{p_2}}} \|\Id\|_{\cop_{p_1,q_1}(u_1,v_1)\rw L_{p_2}(v_2 H^*(g)^{\frac{1}{p_2}})}.
\end{align*}
Since $q_1\le p_2$, applying Theorem~[\ref{Emb-Cop-Lp}, (i)], we obtain that
\begin{align*}
\|\Id\|_{\cop_{p_1,q_1}(u_1,v_1)\rw \ces_{p_2,q_2}(u_2,v_2)} \ap \left\{ \sup_{g\in \M^+\I} \ddfrac {\sup_{t\in \I} \|u_1\|_{q_1,(0,t)}^{- p_2} \|H^*g\|_{\frac{p_1}{p_1 - p_2},v^{p_2},(0,t)}} {\|g\|_{\frac{q_2}{q_2-p_2},u_2^{-p_2},\I}}\right\}^{\frac{1}{p_2}}.
\end{align*}

\rm{(i)} If $p_1\le q_2$, then applying Theorem~[\ref{IterHar3.2.0}, (i)], we arrive at
\begin{equation*}
\|\Id\|_{\cop_{p_1,q_1}(u_1,v_1)\rw \ces_{p_2,q_2}(u_2,v_2)}\ap \sup_{x\in \I} \vp_1(x) \sup_{t\in \I} V(t,x) \|u_2\|_{q_2,(t,\infty)}.
\end{equation*}

\rm{(ii)} If $q_2 < p_1$, then applying Theorem~[\ref{IterHar3.2.0}, (ii)], we arrive at
\begin{equation*}
\|\Id\|_{\cop_{p_1,q_1}(u_1,v_1)\rw \ces_{p_2,q_2}(u_2,v_2)}\ap
\sup_{x\in \I} \vp_1(x) \bigg(\int_0^\i {\mathcal V}(t,x)^{p_1 \rw
q_2}  d\bigg( - \|u_2\|_{q_2,(t,\infty)}^{p_1 \rw q_2}\bigg)
\bigg)^{\frac{1}{p_1 \rw q_2}}.
\end{equation*}

\end{proof}

\begin{rem}
In view of Remark~\ref{limsupcondition}, if
\begin{align*}
\limsup_{t\rw 0+} V(t) \|u_1\|_{q_1,(0,t)}^{-1} = \limsup_{t\rw +\infty} V(t)\|u_1\|_{q_1,(0,t)} = \limsup_{t\rw 0+}\|u_1\|_{q_1,(0,t)} = \limsup_{t\rw +\infty} \|u_1\|_{q_1,(0,t)}^{-1} = 0,
\end{align*}
then $\vp_1 \in Q_{V^{\frac{1}{p_1 \rw p_2}}}$.
\end{rem}

\begin{thm}\label{maintheorem3}
Let $0<p_1, p_2, q_1, q_2<\i$, $p_2 < p_1$ and $p_2 < \min\{q_1,q_2\}$. Let $v_1, v_2\in \W\I$, $u_1 \in \dual{\O_{q_1}}$ and $u_2\in \O_{q_2}$.
Assume that $V$ is admissible and
$$
\vp_2(x):= \bigg(\int_0^\i [{\mathcal V}(x,t)V(t)]^{q_1 \rw p_2} d \bigg( - \|u_1\|_{q_1,(0,t)}^{- q_1 \rw p_2}\bigg)\bigg)^{\frac{1}{q_1 \rw p_2}} \in Q_{V^{\frac{1}{p_1 \rw p_2}}}.
$$

{\rm (i)} If $\max\{p_1,q_1\} \leq q_2$, then
\begin{align*}
\|\Id\|_{\cop_{p_1,q_1}(u_1,v_1)\rw \ces_{p_2,q_2}(u_2,v_2)} \ap & \sup_{x\in \I} \vp_2(x) \sup_{t\in \I} {\mathcal V}(t,x) \|u_2\|_{q_2,(t,\infty)}\\
& + \|u_1\|_{q_1,\I}^{-1} \sup_{t\in \I} V(t) \|u_2\|_{q_2,(t,\infty)}.
\end{align*}

{\rm (ii)} If $p_1\le q_2<q_1$, then
\begin{align*}
\|\Id\|_{\cop_{p_1,q_1}(u_1,v_1) \rw \ces_{p_2,q_2}(u_2,v_2)} & \\
& \hspace{-3cm} \ap  \bigg( \int_0^\i \vp_2(x)^{\frac{q_1 \rw q_2 \cdot q_1 \rw p_2}{q_2 \rw p_2}} V(x)^{q_1 \rw p_2}  \bigg(\sup_{t\in(0,\infty)} {\mathcal V}(t,x) \|u_2\|_{q_2,(t,\infty)}\bigg)^{q_1 \rw q_2} d \bigg( - \|u_1\|_{q_1,(0,x)}^{- q_1 \rw p_2}\bigg) \bigg)^{\frac{1}{q_1 \rw q_2}} \\
& \hspace{-2.5cm} + \|u_1\|_{q_1,\I}^{-1} \sup_{t\in \I} V(t) \|u_2\|_{q_2,(t,\infty)}.
\end{align*}

{\rm (iii)} If $q_1\le q_2<p_1$, then
\begin{align*}
\|\Id\|_{\cop_{p_1,q_1}(u_1,v_1) \rw \ces_{p_2,q_2}(u_2,v_2)} \ap & \sup_{x\in\I} \vp_2(x) \bigg(\int_0^\i {\mathcal V}(t,x)^{p_1 \rw q_2} d \bigg( - \|u_2\|_{q_2, (t,\infty)}^{p_1 \rw q_2}\bigg)\bigg)^{\frac{1}{p_1 \rw q_2}}\\
& + \|u_1\|_{q_1,\I}^{-1} \bigg( \int_0^\i V(t)^{p_1 \rw q_2} d \bigg( - \|u_2\|_{q_2, (t,\infty)}^{p_1 \rw q_2}\bigg) \bigg)^{\frac{1}{p_1 \rw q_2}}.
\end{align*}

{\rm (iv)} If $q_2<\min\{p_1,q_1\}$, then
\begin{align*}
\|\Id\|_{\cop_{p_1,q_1}(u_1,v_1) \rw \ces_{p_2,q_2}(u_2,v_2)} & \\
& \hspace{-4cm} \ap \bigg( \int_0^\i \vp_2(x)^{\frac{q_1 \rw q_2 \cdot q_1 \rw p_2}{q_2 \rw p_2}} V(x)^{q_1 \rw p_2}  \bigg(\int_0^{\infty}{\mathcal V}(t,x)^{p_1 \rw q_2} d \bigg(-\|u_2\|_{q_2,(t,\infty)}^{p_1 \rw q_2} \bigg) \bigg)^{\frac{q_1 \rw q_2}{p_1 \rw q_2}} d\bigg(-\|u_1\|_{q_1,(0,x)}^{- q_1 \rw p_2} \bigg) \bigg)^{\frac{1}{q_1 \rw q_2}}
\\
& \hspace{-3.5cm} +\|u_1\|_{q_1,\I}^{-1} \bigg( \int_0^\i V(t)^{p_1 \rw q_2} d \bigg( - \|u_2\|_{q_2, (t,\infty)}^{p_1 \rw q_2}\bigg) \bigg)^{\frac{1}{p_1 \rw q_2}}.
\end{align*}
\end{thm}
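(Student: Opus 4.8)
The plan is to run the duality scheme of Lemma~\ref{mainlemma} and reduce the embedding norm to a weighted iterated Hardy-type inequality, mirroring the proof of Theorem~\ref{maintheorem1}; the difference is that the hypothesis $p_2<q_1$ now forces the integral branch, part~(ii) of Theorem~\ref{Emb-Cop-Lp}, in place of the supremal branch, and correspondingly the iterated inequality with $q<\i$, Theorem~\ref{IterHar3.1.0}, in place of Theorem~\ref{IterHar3.2.0}. Concretely, I would first apply Lemma~\ref{mainlemma} to write
\[
\|\Id\|_{\cop_{p_1,q_1}(u_1,v_1)\rw\ces_{p_2,q_2}(u_2,v_2)}^{p_2}=\sup_{g\in\M^+\I}\frac{N(g)^{p_2}}{\|g\|_{\frac{q_2}{q_2-p_2},u_2^{-p_2},\I}},\qquad N(g):=\|\Id\|_{\cop_{p_1,q_1}(u_1,v_1)\rw L_{p_2}(v_2 H^*(g)^{1/p_2})}.
\]
Since $p_2<q_1$, Theorem~\ref{Emb-Cop-Lp}(ii) applies to $N(g)$; writing $\sigma:=p_1\rw p_2$, $\rho:=q_1\rw p_2$ and noting that the weight it produces is $v_2H^*(g)^{1/p_2}v_1^{-1}=vH^*(g)^{1/p_2}$, it gives $N(g)\ap A(g)+B(g)$ with
\[
A(g)=\Big(\int_\I\big\|vH^*(g)^{1/p_2}\big\|_{\sigma,(0,t)}^{\rho}\,d\big(-\|u_1\|_{q_1,(0,t)}^{-\rho}\big)\Big)^{1/\rho},\qquad B(g)=\frac{\big\|vH^*(g)^{1/p_2}\big\|_{\sigma,\I}}{\|u_1\|_{q_1,\I}}.
\]
Because $p_2>0$, I would use $(A+B)^{p_2}\ap A^{p_2}+B^{p_2}$ and the comparability of the supremum of a sum with the sum of the suprema to split the quantity as $I+II$, where $I:=\sup_g A(g)^{p_2}/\|g\|_{\frac{q_2}{q_2-p_2},u_2^{-p_2},\I}$ and $II$ is its analogue with $B$, so that the embedding norm is $\ap I^{1/p_2}+II^{1/p_2}$.

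I would dispose of the boundary piece $II$ first, since it is the origin of the additive terms $\|u_1\|_{q_1,\I}^{-1}(\cdots)$. Pulling out $\|u_1\|_{q_1,\I}^{-p_2}$ and running the duality of Lemma~\ref{mainlemma} in reverse with the Lebesgue source $L_{p_1}(v_1)$ in place of the Copson space, the remaining supremum is exactly $\|\Id\|_{L_{p_1}(v_1)\rw\ces_{p_2,q_2}(u_2,v_2)}^{p_2}$, because $\|\Id\|_{L_{p_1}(v_1)\rw L_{p_2}(v_2H^*(g)^{1/p_2})}=\|vH^*(g)^{1/p_2}\|_{\sigma,\I}$ is a plain Hölder estimate. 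Theorem~\ref{Emb-Lp-Ces} then evaluates it: part~(i) when $p_1\le q_2$ (covering cases (i)--(ii)) gives $\sup_tV(t)\|u_2\|_{q_2,(t,\infty)}$, while part~(ii) when $q_2<p_1$ (cases (iii)--(iv)) gives the $\|u_2\|_{q_2}$-integral, precisely the second summands displayed in the statement.

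The core of the argument is the piece $I$. Unfolding $\|vH^*(g)^{1/p_2}\|_{\sigma,(0,t)}^{\rho}=\big(\int_0^tv^{\sigma}(H^*g)^{\sigma/p_2}\big)^{\rho/\sigma}$, I would recognize $I$ as the best constant in an inequality of the exact shape~\eqref{eq.4.1} with $h=g$, admissible $U=V^{\sigma}$, inner and outer exponents $p=\sigma/p_2=p_1/(p_1-p_2)$ and $q=\rho/p_2=q_1/(q_1-p_2)$, target exponent $\theta=q_2/(q_2-p_2)$ and right-hand weight $u_2^{-p_2\theta}$; the standing assumption on $\vp_2$ is what makes the fundamental function of this data (which is comparable to $\vp_2^{\rho}$) non-degenerate, so that Theorem~\ref{IterHar3.1.0} applies. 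The decisive observation is that $x\mapsto x/(x-p_2)$ is decreasing on $(p_2,\i)$, so inequalities between $\theta$ and $p,q$ reverse into inequalities between $q_2$ and $p_1,q_1$; thereby the four regimes $\theta\le\min\{p,q\}$, $q<\theta\le p$, $p<\theta\le q$, $\max\{p,q\}<\theta$ of Theorem~\ref{IterHar3.1.0} match respectively the four cases $\max\{p_1,q_1\}\le q_2$, $p_1\le q_2<q_1$, $q_1\le q_2<p_1$, $q_2<\min\{p_1,q_1\}$. Substituting $A_1,\dots,A_4$ and simplifying via $\mathcal U(t,x)^{1/p}\ap\mathcal V(t,x)^{p_2}$ and $V_{\theta}(t)^{1/\theta'}=\|u_2\|_{q_2,(t,\infty)}^{p_2}$ (with their integral counterparts in cases (iii)--(iv)) converts $I^{1/p_2}$ into the four $\vp_2$-expressions, and adding $II^{1/p_2}$ completes each case.

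The hard part will be the exponent bookkeeping: carrying the conjugate indices and the repeated powers of $p_2$ faithfully through the duality, the reverse-Copson step, and the normalization of the iterated inequality; verifying that the fundamental function of the iterated data really is a power of $\vp_2$, so that the quasiconcavity hypothesis transfers; and confirming that the four case boundaries coincide. A minor but necessary point is to record that splitting the supremum of $A^{p_2}+B^{p_2}$ into $I+II$ costs only constants depending on $p_2$.
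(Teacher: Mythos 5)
Your proposal is correct and follows essentially the paper's own route: Lemma~\ref{mainlemma} followed by Theorem~\ref{Emb-Cop-Lp}(ii) to split the norm into a boundary piece and a main piece, and then Theorem~\ref{IterHar3.1.0} in its four regimes (with exactly your case-matching via the monotonicity of $x\mapsto x/(x-p_2)$ and the identifications $U=V^{p_1\rw p_2}$, $\mathcal U(t,x)^{1/p}\ap\mathcal V(t,x)^{p_2}$, $V_\theta(t)^{1/\theta'}=\|u_2\|_{q_2,(t,\infty)}^{p_2}$). The only deviation is the boundary term: you undo the duality and evaluate $\|\Id\|_{L_{p_1}(v_1)\rw\ces_{p_2,q_2}(u_2,v_2)}$ via Theorem~\ref{Emb-Lp-Ces}, whereas the paper reads the same quantity as $\|\Id\|_{L_{\frac{q_2}{q_2-p_2}}(u_2^{-p_2})\rw\cop_{1,\frac{p_1}{p_1-p_2}}(v^{p_2},{\bf 1})}$ and applies Theorem~\ref{Emb-Lp-Cop}; these are dual descriptions of one and the same constant and produce identical expressions in both regimes $p_1\le q_2$ and $q_2<p_1$.
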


\begin{proof}
By  Lemma \ref{mainlemma}, applying Theorem~[\ref{Emb-Cop-Lp}, (ii)], we have that
\begin{align*}
\|\Id\|_{\cop_{p_1,q_1}(u_1,v_1) \rw \ces_{p_2,q_2}(u_2,v_2)} & \\
& \hspace{-3.5cm} \ap \|u_1\|_{q_1,\I}^{-1} \left\{\sup_{g\in \M^+\I} \ddfrac{ \|H^*g\|_{\frac{p_1}{p_1 - p_2},v^{p_2},(0,\infty)}} {\|g\|_{\frac{q_2}{q_2-p_2},u_2^{-p_2},\I}} \right\}^{\frac{1}{p_2}} \\
& \hspace{-3cm} + \left\{\sup_{g\in \M^+\I} \ddfrac{\bigg(\int_0^\i \|H^* g\|_{\frac{p_1}{p_1 - p_2},v^{p_2},(0,t)}^{\frac{q_1}{q_1-p_2}} d \bigg( - \|u_1\|_{q_1,(0,t)}^{- \frac{q_1 p_2}{q_1 - p_2}} \bigg)\bigg)^{\frac{q_1-p_2}{q_1}}}  {\|g\|_{\frac{q_2}{q_2-p_2},u_2^{-p_2},\I}} \right\}^{\frac{1}{p_2}}\\
& \hspace{-3.5cm} := C_1 + C_2.
\end{align*}

Note that
$$
C_1 = \|u_1\|_{q_1,\I}^{-1} \left\{
\|\Id\|_{L_{\frac{q_2}{q_2-p_2}}\big(u_2^{-p_2}\big) \rw \cop_{1,
\frac{p_1}{p_1-p_2}}\big(v^{p_2}, {\bf
1}\big)}\right\}^{\frac{1}{p_2}}
$$

Assume first that $p_1\leq q_2$.  Applying Theorem~[\ref{Emb-Lp-Cop}, (i)], we arrive at
\begin{align}\label{C_1 1}
C_1 \ap \|u_1\|_{q_1,\I}^{-1} \sup_{t\in \I} V(t)\, \|u_2\|_{q_2,(t,\infty)}.
\end{align}

{\rm (i)} Let  $q_1\le q_2$. Using Theorem~[\ref{IterHar3.1.0}, (i)], we obtain that
\begin{equation*}
C_2 \ap \sup_{x\in \I} \vp_2(x) \, \sup_{t\in (0,\infty)} {\mathcal V} (t,x) \,\|u_2\|_{q_2,(t,\infty)}.
 \end{equation*}
Consequently, the proof is completed in this case.

{\rm (ii)} Let $q_2<q_1$. Applying Theorem~[\ref{IterHar3.1.0}, (ii)], we have that
\begin{align*}
C_2 \ap \bigg( \int_0^\i \vp_2(x)^{\frac{q_1 \rw q_2 \cdot q_1 \rw p_2}{q_2 \rw p_2}} V(x)^{q_1 \rw p_2}  \bigg(\sup_{t\in(0,\infty)} {\mathcal V}(t,x) \|u_2\|_{q_2,(t,\infty)}\bigg)^{q_1 \rw q_2} d \bigg( - \|u_1\|_{q_1,(0,x)}^{- q_1 \rw p_2}\bigg) \bigg)^{\frac{1}{q_1 \rw q_2}},
\end{align*}
and the statement follows in this case.

Let us now assume that $q_2 < p_1$. Then using Theorem~[\ref{Emb-Lp-Cop}, (ii)], we have that
\begin{align}\label{C_1 2}
C_1\ap  \|u_1\|_{q_1,\I}^{-1} \bigg( \int_0^\i V(t)^{p_1 \rw q_2} d \bigg( - \|u_2\|_{q_2, (t,\infty)}^{p_1 \rw q_2}\bigg) \bigg)^{\frac{1}{p_1 \rw q_2}}.
\end{align}

{\rm (iii)} Let $q_1\le q_2$, then Theorem~[\ref{IterHar3.1.0}, (iii)] yields that
\begin{align*}
C_2\ap \sup_{x\in\I} \vp_2(x) \bigg(\int_0^\i {\mathcal V}(t,x)^{p_1 \rw q_2} d \bigg( - \|u_2\|_{q_2, (t,\infty)}^{p_1 \rw q_2}\bigg)\bigg)^{\frac{1}{p_1 \rw q_2}},
\end{align*}
these complete the proof in this case.

{\rm (iv)} If $q_2 < q_1$, then on using Theorem~[\ref{IterHar3.1.0}, (iv)], we arrive at
\begin{align*}
C_2 \ap \bigg( \int_0^\i \vp_2(x)^{\frac{q_1 \rw q_2 \cdot q_1 \rw p_2}{q_2 \rw p_2}} V(x)^{q_1 \rw p_2}  \bigg(\int_0^{\infty}{\mathcal V}(t,x)^{p_1 \rw q_2} d \bigg(-\|u_2\|_{q_2,(t,\infty)}^{p_1 \rw q_2} \bigg) \bigg)^{\frac{q_1 \rw q_2}{p_1 \rw q_2}} d\bigg(-\|u_1\|_{q_1,(0,x)}^{- q_1 \rw p_2} \bigg) \bigg)^{\frac{1}{q_1 \rw q_2}},
\end{align*}
and the proof follows.
\end{proof}

\begin{rem}
Assume that $\vp_2(x) < \infty, ~ x > 0$. In view of Remark~\ref{nondegrem}, if
$$
\int_0^1 \bigg(\int_0^t u_1^{q_1} \bigg)^{-\frac{q_1}{q_1-p_2}} u_1^{q_1}(t) \,dt = \int_1^{\infty} V(t)^{\frac{q_1 p_2}{q_1-p_2}} \bigg(\int_0^t u_1^{q_1} \bigg)^{-\frac{q_1}{q_1-p_2}} u_1^{q_1}(t) \,dt = \infty,
$$
then $\vp_2 \in Q_{V^{\frac{1}{p_1 \rw p_2}}}$.
\end{rem}

Now consider the case when $p_1 = p_2=p$.
\begin{thm}\label{maintheorem2}
Let $0<q_1< p < q_2<\i$. Assume that $v_1, v_2\in \W\I$, $u_1 \in \dual{\O_{q_1}}$ and $u_2\in \O_{q_2}$. Then
\begin{equation*}
\|\Id\|_{\cop_{p_1,q_1}(u_1,v_1)\rw \ces_{p_2,q_2}(u_2,v_2)}\ap \sup_{t \in \I} \big\| \|u_1\|_{q_1,(0,\cdot)}^{-1} \big\|_{\i,v,(0,t)} \|u_2\|_{q_2,(t,\i)}.
\end{equation*}
\end{thm}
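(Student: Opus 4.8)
The plan is to run the same duality-plus-reduction scheme as in Theorems~\ref{maintheorem1} and \ref{maintheorem3}, but to exploit the fact that here $p_1=p_2=p$, which forces the inner Hardy-type operator to degenerate into a supremum and thereby collapses the iterated structure into a single (non-iterated) Copson inequality. First I would apply Lemma~\ref{mainlemma}: its hypotheses $p_2\le p_1$ and $p_2<q_2$ hold (with $p_2=p_1=p$ and $p<q_2$), so writing $\sigma:=\frac{q_2}{q_2-p}>1$ one obtains
\[
\|\Id\|_{\cop_{p_1,q_1}(u_1,v_1)\rw\ces_{p_2,q_2}(u_2,v_2)}=\sup_{g\in\M^+\I}\frac{1}{\|g\|_{\frac{q_2}{q_2-p},u_2^{-p},\I}^{1/p}}\,\|\Id\|_{\cop_{p,q_1}(u_1,v_1)\rw L_p(v_2 H^*(g)^{1/p})}.
\]
Since $q_1<p$ we have $q_1\le p_1=p$, so the inner norm may be estimated by Theorem~[\ref{Emb-Cop-Lp}, (i)]. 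The crucial point is that $p_1\rw p_2=\infty$ when $p_1=p_2$, so the exponent $\|\cdot\|_{p_1\rw p_2,(0,t)}$ is the $L_\infty$-norm; recalling $v=v_1^{-1}v_2$ this yields
\[
\|\Id\|_{\cop_{p,q_1}(u_1,v_1)\rw L_p(v_2 H^*(g)^{1/p})}\ap\sup_{t\in\I}\|u_1\|_{q_1,(0,t)}^{-1}\,\esup_{s\in(0,t)}v(s)\,H^*(g)(s)^{1/p}.
\]

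Next I would collapse the resulting double supremum. Since $q_1<\i$, the map $t\mapsto\|u_1\|_{q_1,(0,t)}$ is continuous and non-decreasing, so for any non-negative $F$ one has the elementary identity $\sup_{t\in\I}\|u_1\|_{q_1,(0,t)}^{-1}\esup_{s\in(0,t)}F(s)=\esup_{s\in\I}\|u_1\|_{q_1,(0,s)}^{-1}F(s)$: the inequality ``$\ge$'' is seen by letting $t\downarrow s$ (using continuity of $\|u_1\|_{q_1,(0,\cdot)}$), and ``$\le$'' follows from $\|u_1\|_{q_1,(0,t)}^{-1}\le\|u_1\|_{q_1,(0,s)}^{-1}$ for $s<t$. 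Applying this with $F=v H^*(g)^{1/p}$, inserting the outcome into the formula from Lemma~\ref{mainlemma}, and raising to the $p$-th power gives
\[
\|\Id\|_{\cop_{p_1,q_1}(u_1,v_1)\rw\ces_{p_2,q_2}(u_2,v_2)}^p\ap\sup_{g\in\M^+\I}\frac{\big\|h\,H^*(g)\big\|_{\i,\I}}{\|g\|_{\frac{q_2}{q_2-p},u_2^{-p},\I}},\qquad h:=v^p\,\|u_1\|_{q_1,(0,\cdot)}^{-p}.
\]
The right-hand side is exactly the best constant in the Copson inequality $\|H^*g\|_{\i,h,\I}\le c\,\|g\|_{\sigma,u_2^{-p},\I}$.

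Finally I would invoke Theorem~[\ref{CopsonIneq}, (i)] with inner parameter $\sigma$ (admissible since $\sigma\ge1$) and target exponent $\i$, so that the governing quantity is $A^*(\sigma,\i)=\sup_{t\in\I}\|u_2^{p}\|_{\sigma',(t,\i)}\,\|h\|_{\i,(0,t)}$. The exponent bookkeeping is routine: since $\tfrac1\sigma=1-\tfrac{p}{q_2}$ we get $\sigma'=q_2/p$, whence $\|u_2^{p}\|_{\sigma',(t,\i)}=\|u_2\|_{q_2,(t,\i)}^{p}$, while $\|h\|_{\i,(0,t)}=\big(\esup_{s\in(0,t)}v(s)\|u_1\|_{q_1,(0,s)}^{-1}\big)^{p}=\big\|\,\|u_1\|_{q_1,(0,\cdot)}^{-1}\big\|_{\i,v,(0,t)}^{p}$. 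Taking $p$-th roots then produces precisely $\sup_{t\in\I}\big\|\,\|u_1\|_{q_1,(0,\cdot)}^{-1}\big\|_{\i,v,(0,t)}\|u_2\|_{q_2,(t,\i)}$, as claimed. The only genuinely delicate steps are the degeneration $p_1\rw p_2=\i$ that turns the inner Lebesgue norm into an essential supremum and the consequent collapse of the double supremum; everything else is the bookkeeping above, together with the (mild) verification that $v_2 H^*(g)^{1/p}$ is an admissible weight for those $g$ with finite denominator.
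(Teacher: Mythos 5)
Your proposal is correct and follows essentially the same route as the paper: Lemma~\ref{mainlemma}, then Theorem~[\ref{Emb-Cop-Lp},~(i)] with the degenerate exponent $p_1\rw p_2=\i$, then a characterization of the resulting weighted Copson inequality with target exponent $\i$. The only cosmetic difference is that you invoke Theorem~[\ref{CopsonIneq},~(i)] directly where the paper phrases the last step as the embedding $L_{\frac{q_2}{q_2-p}}(u_2^{-p})\hra\cop_{1,\i}\big(v^p\|u_1\|_{q_1,(0,\cdot)}^{-p},{\bf 1}\big)$ and cites Theorem~[\ref{Emb-Lp-Cop},~(i)] (which for those parameters is the same statement), and that you spell out the collapse of the double supremum that the paper leaves implicit.
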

\begin{proof}
By  Lemma \ref{mainlemma}, applying Theorem~[\ref{Emb-Cop-Lp}, (i)], we have that
\begin{align*}
\|\Id\|_{\cop_{p_1,q_1}(u_1,v_1)\rw \ces_{p_2,q_2}(u_2,v_2)} & \ap \left\{\sup_{g\in \M^+\I} \ddfrac{ \|H^*g\|_{\infty,v^p \|u_1\|_{q_1,(0,\cdot)}^{-p},\I}}
{\|g\|_{\frac{q_2}{q_2-p},u_2^{-p},\I}} \right\}^{\frac{1}{p}}\\
& = \left\{\|\Id\|_{L_{\frac{q_2}{q_2-p}}\big(u_2^{-p}\big)\rw\cop_{1,\i}\big(v^p \|u_1\|_{q_1,(0,\cdot)}^{-p}, {\bf 1}\big)}\right\}^{\frac{1}{p}}.
\end{align*}
Therefore, by Theorem~[\ref{Emb-Lp-Cop}, (i)],
\begin{equation*}
\|\Id\|_{\cop_{p_1,q_1}(u_1,v_1)\rw \ces_{p_2,q_2}(u_2,v_2)}\ap \sup_{t \in \I} \big\| \|u_1\|_{q_1,(0,\cdot)}^{-1} \big\|_{\i,v,(0,t)} \|u_2\|_{q_2,(t,\i)}.
\end{equation*}
\end{proof}

Before proceeding to the case $p = p_ 1 = p_2 < q_2$, we prove another variant of "gluing" lemma. The idea of proof comes in the same line as in \cite[Theorem 3.1]{gogperstepwall}.
\begin{lem}\label{gluing.lem}
    Let $\b$ be positive number and $u \in \W\I$, $g \in \M^+\I$. Assume that $h$ is a non-negative continuous function on $\I$. Then
    \begin{align*}
    \int_0^{\infty} \bigg( \int_0^{\infty} {\mathcal U}(x,t)g(t)\,dt \bigg)^{\b - 1} \bigg(\sup_{t\in \I} {\mathcal U}(t,x) h(t)\bigg)^{\b}g(x)\,dx & \\
    & \hspace{-5.5cm} \ap \int_0^{\infty} \bigg( \int_0^x g(t)\,dt\bigg)^{\b - 1} \bigg(\sup_{t \in (x, \infty)} h(t)\bigg)^{\b} g(x)\,dx \\
    & \hspace{-5cm} + \int_0^{\infty} \bigg( \int_x^{\infty} U(\tau)^{-1} g(\tau)\,d\tau\bigg)^{\b - 1}  \bigg( \sup_{t \in (0,x)} U(t)h(t)\bigg)^{\b}\,U(x)^{-1}g(x)\,dx.
    \end{align*}
\end{lem}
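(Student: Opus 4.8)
The plan is to replace the inner integral and the inner supremum by elementary two--term expressions and then to reduce the lemma to an equivalence between two explicit integrals. Since $U$ is continuous and strictly increasing, for $t \le x$ one has $\tfrac12 \le \mathcal{U}(x,t) \le 1$, while for $t > x$ one has $\tfrac{U(x)}{2U(t)} \le \mathcal{U}(x,t) \le \tfrac{U(x)}{U(t)}$; splitting the integral at $t = x$ therefore gives
\[
\int_0^{\infty} \mathcal{U}(x,t) g(t)\,dt \approx G_1(x) + U(x) G_2(x), \qquad G_1(x) := \int_0^x g, \quad G_2(x) := \int_x^{\infty} \frac{g(\tau)}{U(\tau)}\,d\tau.
\]
By the symmetric bounds for $\mathcal{U}(t,x)$ (now $\approx 1$ when $t \ge x$ and $\approx U(t)/U(x)$ when $t < x$) and the continuity of $h$,
\[
\sup_{t \in \I} \mathcal{U}(t,x) h(t) \approx H_1(x) + U(x)^{-1} H_2(x), \qquad H_1(x) := \sup_{t \in (x,\infty)} h(t), \quad H_2(x) := \sup_{t \in (0,x)} U(t) h(t).
\]
Inserting these estimates into the left-hand side reduces the assertion to proving
\[
J := \int_0^{\infty} \big( G_1 + U G_2 \big)^{\beta - 1} \big( H_1 + U^{-1} H_2 \big)^{\beta} g \, dx \approx I_1 + I_2,
\]
where $I_1 := \int_0^{\infty} G_1^{\beta - 1} H_1^{\beta} g\,dx$ and $I_2 := \int_0^{\infty} G_2^{\beta - 1} H_2^{\beta} U^{-1} g\,dx$ are exactly the two terms on the right.

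In the equivalence $J \approx I_1 + I_2$ one of the two inequalities is elementary and depends only on the scalar fact that $(a+b)^s \approx a^s + b^s$ for $s > 0$. If $\beta \ge 1$, the lower bound is immediate: discarding one summand in each factor and using $(G_1 + U G_2)^{\beta-1} \ge G_1^{\beta-1}$ and $\ge (U G_2)^{\beta-1}$ yields $J \gtrsim I_1$ and $J \gtrsim I_2$, hence $J \gtrsim I_1 + I_2$. If $\beta < 1$, the roles reverse and it is the upper bound that is immediate: here $\beta - 1 < 0$ forces $(G_1 + U G_2)^{\beta-1} \le \min\{ G_1^{\beta-1}, (U G_2)^{\beta-1} \}$, so pairing $G_1^{\beta-1}$ with the $H_1^{\beta}$ part and $(U G_2)^{\beta-1}$ with the $U^{-\beta} H_2^{\beta}$ part of $(H_1 + U^{-1} H_2)^{\beta} \lesssim H_1^{\beta} + U^{-\beta} H_2^{\beta}$ bounds the integrand of $J$ by that of $I_1 + I_2$ pointwise.

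The remaining inequality is the core of the lemma and the main obstacle. For $\beta \ge 1$ the expansion of $(G_1 + U G_2)^{\beta-1}(H_1 + U^{-1} H_2)^{\beta}$ produces, besides the two diagonal terms, the off-diagonal terms $G_1^{\beta-1} U^{-\beta} H_2^{\beta} g$ and $U^{\beta-1} G_2^{\beta-1} H_1^{\beta} g$; for $\beta < 1$ the dual difficulty is to bound $I_1$ and $I_2$ below by $J$ on the regions $\{ G_1 < U G_2 \}$ and $\{ G_1 \ge U G_2 \}$, respectively. In neither case are these contributions controlled pointwise by $I_1 + I_2$: the pointwise estimate fails precisely on the region where the ``wrong'' summand of $G_1 + U G_2$ dominates, so genuine integration is unavoidable. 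The plan is to carry this out by the blocking argument underlying \cite[Theorem 3.1]{gogperstepwall}: decompose $\I$ along the level sets $\{ x : U(x) \approx 2^k \}$, replace $U$ by the constant $2^k$ and each monotone function $G_1, G_2, H_1, H_2$ by its value at an endpoint of the $k$-th block, and sum over $k$. On each block the off-diagonal contribution carries a geometric factor relative to the diagonal one, so the resulting series in $k$ are summable and the off-diagonal mass is absorbed into $\sum_k$ of the diagonal blocks, that is, into $I_1 + I_2$. Verifying the summability of these geometric series uniformly over all parameter configurations is the step I expect to require the most care; the reduction of the first paragraph and the easy direction above are routine.
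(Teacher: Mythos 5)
Your reduction of the left-hand side to $J:=\int_0^\infty (G_1+UG_2)^{\b-1}(H_1+U^{-1}H_2)^{\b}g$ is exactly the paper's first step, and your identification of which inequality is elementary (depending on the sign of $\b-1$) is correct and in fact more explicit than the paper's. But the entire content of the lemma lies in the remaining direction, and there you only promise an argument rather than give one; moreover the discretization you propose is not the one that works. You suggest cutting $\I$ along the level sets $\{x: U(x)\ap 2^k\}$ and claim that ``on each block the off-diagonal contribution carries a geometric factor relative to the diagonal one.'' Nothing forces the $g$-mass (or the $U^{-1}g$-mass) of a $U$-level block to be geometric in $k$: $g$ may pile essentially all of its mass into a single such block or spread it arbitrarily, so the series you intend to sum has no a priori geometric decay and the absorption into $I_1+I_2$ does not follow. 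The decay that makes this kind of blocking work must come from the cumulative integrals themselves, not from $U$.

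The paper's proof chooses the partition points by $\int_0^{x_k}g=2^k$ when controlling the off-diagonal term $\int G_1^{\b-1}\big(U^{-1}H_2\big)^{\b}g$, and by $\int_{x_k}^{\i}U^{-1}g=2^{-k}$ when controlling $\int (UG_2)^{\b-1}H_1^{\b}\,U^{-1}g$; it then selects in each block a point $y_k$ at which the relevant supremum (of $U(y)^{-1}\sup_{t<y}U(t)h(t)$, respectively of $U(y)\sup_{t>y}h(t)$) is attained up to a factor $2$, splits the supremum over $(0,y_k)$ into $(0,y_{k-2})\cup(y_{k-2},y_k)$, and uses $2^{k-1}\le\int_{y_{k-2}}^{y_k}g\le 2^{k+1}$ to reconstitute the integrals $A_1$ and $A_2$. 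Both devices --- level sets of $G_1$ and $G_2$ rather than of $U$, and the near-attainment points $y_k$ --- are essential and absent from your sketch. As written, the proposal stops exactly where the proof begins, so it cannot be accepted without the discretization being carried out, and with the correct choice of blocks.
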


\begin{proof}
    Denote by
    \begin{align*}
    A_1 : & = \int_0^{\infty} \bigg( \int_0^x g(t)\,dt\bigg)^{\b - 1} \bigg(\sup_{t \in (x, \infty)} h(t)\bigg)^{\b} g(x)\,dx, \\
    A_2 : & = \int_0^{\infty} \bigg( \int_x^{\infty} U(\tau)^{-1} g(\tau)\,d\tau\bigg)^{\b - 1}  \bigg( \sup_{t \in (0,x)} U(t)h(t)\bigg)^{\b}\,U(x)^{-1}g(x)\,dx.
    \end{align*}
    Obviously,
    \begin{align*}
    \int_0^{\infty} \bigg( \int_0^{\infty} {\mathcal U}(x,t)g(t)\,dt \bigg)^{\b - 1} \bigg(\sup_{t\in \I} {\mathcal U}(t,x) h(t)\bigg)^{\b}\,g(x)dx  & \\
    & \hspace{-7cm} \ap \int_0^{\infty} \bigg( \int_0^x g(t)\,dt + U(x) \int_x^{\infty} U(t)^{-1}g(t) \,dt\bigg)^{\b - 1} \bigg(U(x)^{-1} \sup_{t \in (0,x)} U(t)h(t) + \sup_{t \in (x,\infty)}h(t)\bigg)^{\b} g(x)\,dx \\
    & \hspace{-7cm} \ap A_1 + A_2 + B_1 + B_2,
    \end{align*}
    where
    \begin{align*}
    B_1 : & = \int_0^{\infty} \bigg( \int_0^x g(t)\,dt\bigg)^{\b - 1} \bigg( U(x)^{-1} \sup_{t \in (0,x)} U(t)h(t)\bigg)^{\b} g(x)\,dx, \\
        B_2 : & = \int_0^{\infty} \bigg( \int_x^{\infty} U(t)^{-1}g(t) \, dt\bigg)^{\b - 1}  \bigg(U(x)\sup_{t \in (x,\infty)} h(t)\bigg)^{\b} U(x)^{- 1}g(x)\,dx.
    \end{align*}
    It is enough to show that $B_i \ls A_1 + A_2$, $i = 1,2$.

    Let us show that $B_1 \ls A_1 + A_2$. We will consider the case when $\int_0^{\infty} g(t)\,dt < \infty$ (The case when $\int_0^{\infty} g(t)\,dt = \infty$ is much simpler to treat). Define a sequence $\{x_k\}_{k = -\infty}^M$ such that $\int_0^{x_k} g(t)\,dt = 2^k$ if $-\infty < k \le M$ and $2^M \le \int_0^{\infty} g(t)\,dt < 2^{M+1}$. Then
    \begin{align*}
    B_1 & \le \int_0^{\infty} \bigg( \int_0^x g(t)\,dt\bigg)^{\b - 1} \bigg( \sup_{y \in (x,\infty)} U(y)^{-1} \sup_{t \in (0,y)} U(t)h(t)\bigg)^{\b} g(x)\,dx \\
    & \ap \sum_{k = - \infty}^M 2^{k \b}\bigg( \sup_{y \in (x_k,\infty)} U(y)^{-1} \sup_{t \in (0,y)} U(t)h(t)\bigg)^{\b} \\
    & \ap \sum_{k = - \infty}^M 2^{k \b}\bigg( \sup_{y \in (x_k,x_{k+1})} U(y)^{-1} \sup_{t \in (0,y)} U(t)h(t)\bigg)^{\b}.
    \end{align*}
    For every $-\infty < k \le M$ there exists $y_k \in (x_k,x_{k+1})$ such that
    $$
    \sup_{y \in (x_k,x_{k+1})} U(y)^{-1} \sup_{t \in (0,y)} U(t)h(t) \le 2 U(y_k)^{-1} \sup_{t \in (0,y_k)} U(t)h(t).
    $$
    Therefore,
    \begin{align*}
    B_1 & \ls \sum_{k = - \infty}^M 2^{k \b}\bigg( U(y_k)^{-1} \sup_{t \in (0,y_k)} U(t)h(t)\bigg)^{\b} \\
    & \ap \sum_{k = - \infty}^M 2^{k \b}\bigg( U(y_k)^{-1} \sup_{t \in (0,y_{k-2})} U(t)h(t)\bigg)^{\b} + \sum_{k = - \infty}^M 2^{k \b}\bigg( U(y_k)^{-1} \sup_{t \in (y_{k-2},y_k)} U(t)h(t)\bigg)^{\b} = : I + II.
    \end{align*}
    Note that $2^k \le \int_0^{y_k} g(x)\,dx \le 2^{k+1}$ and $2^{k-1} \le \int_{y_{k-2}}^{y_k} g(x)\,dx \le 2^{k+1}$, $- \infty < k \le M$. It yields that
    \begin{align*}
    I & \ls  \sum_{k = - \infty}^M \int_{y_{k-2}}^{y_k} \bigg( \int_x^{y_k} g(t)\,dt\bigg)^{\b - 1} g(x)\,dx
    \cdot \bigg( U(y_k)^{-1} \sup_{t \in (0,y_{k-2})} U(t)h(t)\bigg)^{\b} \\
    & \le \sum_{k = - \infty}^M \int_{y_{k-2}}^{y_k} \bigg( \int_x^{y_k} U(t)^{-1}g(t)\,dt\bigg)^{\b - 1} U(x)^{-1}g(x)\,dx
    \cdot \bigg( \sup_{t \in (0,y_{k-2})} U(t)h(t)\bigg)^{\b} \\
    & \le \sum_{k = - \infty}^M \int_{y_{k-2}}^{y_k} \bigg( \int_x^{\infty} U(t)^{-1}g(t)\,dt\bigg)^{\b - 1} \bigg( \sup_{t \in (0,x)} U(t)h(t)\bigg)^{\b} U(x)^{-1}g(x)\,dx
    \\
    & \ls \int_0^{\infty} \bigg( \int_x^{\infty} U(\tau)^{-1} g(\tau)\,d\tau\bigg)^{\b - 1}  \bigg( \sup_{t \in (0,x)} U(t)h(t)\bigg)^{\b}\,U(x)^{-1}g(x)\,dx = A_2.
    \end{align*}
    For $II$ we have that
    \begin{align*}
    II & \ls  \sum_{k = - \infty}^M \int_{y_{k-4}}^{y_{k-2}} \bigg( \int_{y_{k-4}}^x g(t)\,dt\bigg)^{\b - 1} g(x)\,dx
    \cdot \bigg( U(y_k)^{-1} \sup_{t \in (y_{k-2},y_k)} U(t)h(t)\bigg)^{\b} \\
    & \le \sum_{k = - \infty}^M  \int_{y_{k-4}}^{y_{k-2}} \bigg( \int_0^x g(t)\,dt\bigg)^{\b - 1} g(x)\,dx
    \cdot \, \bigg(\sup_{t \in (y_{k-2},\infty)} h(t)\bigg)^{\b} \\
    & \le \sum_{k = - \infty}^M  \int_{y_{k-4}}^{y_{k-2}} \bigg( \int_0^x g(t)\,dt\bigg)^{\b - 1}  \bigg(\sup_{t \in (x,\infty)} h(t)\bigg)^{\b} g(x)\,dx
    \\
    & \ls \int_0^{\infty} \bigg( \int_0^x g(t)\,dt\bigg)^{\b - 1} \bigg(\sup_{t \in (x, \infty)} h(t)\bigg)^{\b} \,g(x)dx = A_1.
    \end{align*}
    Combining, we get that $B_1 \ls A_1 + A_2$.

    Now we show that $B_2 \ls A_1 + A_2$. We will consider the case when $\int_0^{\infty} U(t)^{-1}g(t)\,dt < \infty$ (It is much simpler to deal with the case when $\int_0^{\infty} U(t)^{-1}g(t)\,dt = \infty$). Define  a sequence $\{x_k\}_{k = N}^{\infty}$ such that $2^{-k} = \int_{x_k}^{\infty} U(\tau)^{-1} g(\tau)\,d\tau$ if $N \le k < \infty$ and $2^{-N} < \int_0^{\infty} U(\tau)^{-1} g(\tau)\,d\tau \le 2^{- N + 1}$.
    By using elementary calculations, we find that
    \begin{align*}
    B_2 & \le \int_0^{\infty} \bigg( \int_x^{\infty} U(\tau)^{-1} g(\tau)\,d\tau\bigg)^{\b - 1}  \bigg(\sup_{y \in (0,x)} U(y)\sup_{t \in (y,\infty)} h(t)\bigg)^{\b}\,U(x)^{-1}g(x)dx \\
    & \ap \sum_{k = N}^{\infty} 2^{- k \b} \bigg(\sup_{y \in (0,x_k)} U(y)\sup_{t \in (y,\infty)} h(t)\bigg)^{\b} \\
    & \ap \sum_{k = N}^{\infty} 2^{- k \b} \bigg(\sup_{y \in (x_{k-1},x_k)} U(y)\sup_{t \in (y,\infty)} h(t)\bigg)^{\b}.
    \end{align*}
    For every $k = N,\, N+1,\ldots$ there exists $y_k \in (x_{k-1},x_k)$ such that
    $$
    \sup_{y \in (x_{k-1},x_k)} U(y)\sup_{t \in (y,\infty)} h(t) \le 2 U(y_k) \sup_{t \in (y_k,\infty)} h(t).
    $$
    Hence
    \begin{align*}
    B_2 & \ls \sum_{k = N}^{\infty} 2^{- k \b} \bigg( U(y_k) \sup_{t \in (y_k,\infty)} h(t) \bigg)^{\b} \\
    & \ap \sum_{k = N}^{\infty} 2^{- k \b} \bigg( U(y_k) \sup_{t \in (y_k,y_{k+2})} h(t) \bigg)^{\b} +  \sum_{k = N}^{\infty} 2^{- k \b} \bigg( U(y_k) \sup_{t \in (y_{k+2},\infty)} h(t)\bigg)^{\b} = : III + IV.
    \end{align*}
    Since $2^{-k-1} \le \int_{y_k}^{\infty} U(\tau)^{-1} g(\tau)\,d\tau \le 2^{-k}$ and $2^{-k-2} \le \int_{y_k}^{y_{k+2}} U(\tau)^{-1} g(\tau)\,d\tau \le 2^{-k}$, $k=N,\,N+1,\ldots$, we have that
    \begin{align*}
    III & \ls \sum_{k = N}^{\infty} \int_{y_{k+2}}^{y_{k+4}} \bigg( \int_x^{y_{k+4}} U(\tau)^{-1} g(\tau)\,d\tau\bigg)^{\b - 1}  \,U(x)^{-1}g(x)\,dx \cdot \bigg(  U(y_k) \sup_{t \in (y_k,y_{k+2})} h(t) \bigg)^{\b} \\
    & \le \sum_{k = N}^{\infty} \int_{y_{k+2}}^{y_{k+4}} \bigg( \int_x^{\infty} U(\tau)^{-1} g(\tau)\,d\tau\bigg)^{\b - 1}  \bigg( \sup_{t \in (0,x)} U(t)h(t) \bigg)^{\b}\,U(x)^{-1}g(x)\,dx   \\
    & \ls \int_0^{\infty} \bigg( \int_x^{\infty} U(\tau)^{-1} g(\tau)\,d\tau\bigg)^{\b - 1}  \bigg( \sup_{t \in (0,x)} U(t)h(t)\bigg)^{\b}\,U(x)^{-1}g(x)\,dx \ap A_2.
    \end{align*}
    Moreover,
    \begin{align*}
    IV & \ls \sum_{k = N}^{\infty} \int_{y_k}^{y_{k+2}} \bigg( \int_{y_k}^x U(\tau)^{-1} g(\tau)\,d\tau\bigg)^{\b - 1}  \,U(x)^{-1}g(x)dx \cdot \bigg( U(y_k) \sup_{t \in (y_{k+2},\infty)} h(t)\bigg)^{\b} \\
    & \le \sum_{k = N}^{\infty} \int_{y_k}^{y_{k+2}} \bigg( \int_{y_k}^x  g(\tau)\,d\tau\bigg)^{\b - 1}  g(x)\,dx \cdot \bigg( \sup_{t \in (y_{k+2},\infty)} h(t)\bigg)^{\b} \\
    & \le \sum_{k = N}^{\infty} \int_{y_k}^{y_{k+2}} \bigg( \int_0^x  g(\tau)\,d\tau\bigg)^{\b - 1} \bigg( \sup_{t \in (x,\infty)} h(t)\bigg)^{\b} g(x)\,dx \\
    & \ls \int_0^{\infty} \bigg( \int_0^x g(t)\,dt\bigg)^{\b - 1} \bigg(\sup_{t \in (x, \infty)} h(t)\bigg)^{\b} \,g(x)dx \ap A_1.
    \end{align*}
    Therefore, we obtain $B_2 \ls A_1 + A_2$. The proof is complete.
\end{proof}

\begin{thm}\label{maintheorem4}
Let $0<q_1, q_2<\i$, and $0 < p<q_2$. Let $v_1, v_2\in \W\I$, $u_1 \in \dual{\O_{q_1}}$ and $u_2\in \O_{q_2}$.
Assume that $v\in W\I \cap C\I$ and $0< \|u_2^{-1}\|_{q_2 \rw p,(x,\infty)}<\i,\, x > 0$.

{\rm (i)} If $q_1\le q_2$, then
\begin{align*}
\|\Id\|_{\cop_{p_1,q_1}(u_1,v_1)\rw \ces_{p_2,q_2}(u_2,v_2)} \ap & \, \sup_{x\in \I} \vp_2(x) \, \sup_{t\in (0,\infty)} {\mathcal V} (t,x) \,\|u_2\|_{q_2,(t,\infty)} \\
& + \|u_1\|_{q_1,\I}^{-1} \sup_{t \in \I} V(t) \|u_2\|_{q_2,(t,\infty)}.
\end{align*}

{\rm (ii)} If $q_2 < q_1$, then
\begin{align*}
\|\Id\|_{\cop_{p_1,q_1}(u_1,v_1) \rw \ces_{p_2,q_2}(u_2,v_2)} & \\
& \hspace{-3.5cm} \ap \bigg( \int_0^\i \vp_2(x)^{\frac{q_1 \rw q_2 \cdot q_1 \rw p}{q_2 \rw p}} V(x)^{q_1 \rw p}  \bigg(\sup_{t\in(0,\infty)} {\mathcal V}(t,x) \|u_2\|_{q_2,(t,\infty)}\bigg)^{q_1 \rw q_2} d \bigg( - \|u_1\|_{q_1,(0,x)}^{- q_1 \rw p}\bigg) \bigg)^{\frac{1}{q_1 \rw q_2}}\\
& \hspace{-3cm} + \|u_1\|_{q_1,\I}^{-1} \sup_{t \in \I} V(t) \|u_2\|_{q_2,(t,\infty)}.
\end{align*}
\end{thm}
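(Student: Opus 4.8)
The plan is to run the same duality-and-reduction scheme used for Theorems~\ref{maintheorem2} and \ref{maintheorem3}; the new feature is that the inner exponents now coincide, $p_1=p_2=p$, so the inner Lebesgue norm degenerates into a supremum and the \emph{supremal} iterated Hardy inequality (Theorem~\ref{IterHarSupremal.1}) must take over the role played by its $p<\infty$ counterpart in Theorem~\ref{maintheorem3}. First I would apply Lemma~\ref{mainlemma} to write
\begin{equation*}
\|\Id\|_{\cop_{p_1,q_1}(u_1,v_1)\rw \ces_{p_2,q_2}(u_2,v_2)}^{p} \ap \sup_{g\in \M^+\I} \frac{\|\Id\|_{\cop_{p,q_1}(u_1,v_1)\rw L_{p}(v_2 H^*(g)^{1/p})}^{p}}{\|g\|_{\frac{q_2}{q_2-p},u_2^{-p},\I}}.
\end{equation*}
In the regime $p<q_1$ (the complementary case $q_1<p$ being Theorem~\ref{maintheorem2}), the inner embedding norm is supplied by Theorem~[\ref{Emb-Cop-Lp}, (ii)]; raising to the power $p$ and using $(a+b)^p\ap a^p+b^p$ splits the supremum into $C_1^p+C_2^p$, where $C_1$ carries the boundary factor $\|u_1\|_{q_1,\I}^{-1}$ and $C_2$ carries the Stieltjes integral against $d\big(-\|u_1\|_{q_1,(0,\cdot)}^{-q_1\rw p}\big)$.

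The term $C_1$ is routine. Interchanging the supremum in $g$ with the outer supremum shows that $C_1^{p}$ is $\|u_1\|_{q_1,\I}^{-p}$ times $\|\Id\|_{L_{q_2/(q_2-p)}(u_2^{-p})\rw \cop_{1,\infty}(v^p,{\bf 1})}$, since $\esup_{\tau}v(\tau)^p\int_\tau^\infty g$ is exactly the $\cop_{1,\infty}(v^p,{\bf 1})$-norm of $g$. As the outer Copson exponent equals $\infty$, Theorem~[\ref{Emb-Lp-Cop}, (i)] applies; simplifying $\|u_2^p\|_{q_2/p,(t,\infty)}=\|u_2\|_{q_2,(t,\infty)}^p$ and $\|v^p\|_{\infty,(0,t)}=V(t)^p$ gives
\begin{equation*}
C_1 \ap \|u_1\|_{q_1,\I}^{-1}\,\sup_{t\in\I} V(t)\,\|u_2\|_{q_2,(t,\infty)},
\end{equation*}
which is the second summand in both (i) and (ii), independently of whether $q_1\le q_2$ or $q_2<q_1$.

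The substance is $C_2$. Here $C_2^{p}$ is precisely the best constant $c$ in
\begin{equation*}
\bigg(\int_0^\infty \big(\esup_{\tau\in(0,t)} v(\tau)^p (H^*g)(\tau)\big)^{\frac{q_1\rw p}{p}}\, d\big(-\|u_1\|_{q_1,(0,t)}^{-q_1\rw p}\big)\bigg)^{\frac{p}{q_1\rw p}} \le c\,\|g\|_{\frac{q_2}{q_2-p},u_2^{-p},\I},
\end{equation*}
which is exactly the shape covered by Theorem~\ref{IterHarSupremal.1}: the inner supremal weight is $v^p$ (continuous because $v\in C\I$), the driving measure $d\big(-\|u_1\|_{q_1,(0,t)}^{-q_1\rw p}\big)$ is absolutely continuous since $\|u_1\|_{q_1,(0,\cdot)}$ is, the outer exponent is $q_1/(q_1-p)$, and the right-hand weight reproduces $v_{\theta}(x,\infty)=\|u_2\|_{q_2,(x,\infty)}^p$ precisely under the hypothesis $\|u_2^{-1}\|_{q_2\rw p,(x,\infty)}<\infty$. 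This explains both standing assumptions and, crucially, why no quasiconcavity of $V$ is needed here, in contrast with Theorem~\ref{maintheorem3}. Since $t\mapsto t/(t-p)$ is decreasing, $q_1\le q_2$ is equivalent to $\theta\le q$ and $q_2<q_1$ to $q<\theta$, so case (i) uses Theorem~[\ref{IterHarSupremal.1}, (i)] and case (ii) uses Theorem~[\ref{IterHarSupremal.1}, (ii)].

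The hard part, and the reason Lemma~\ref{gluing.lem} was established beforehand, is that Theorem~\ref{IterHarSupremal.1} delivers $C_2$ in a \emph{decoupled} form --- nested suprema and tail integrals of $V(t)$ and $\|u_2\|_{q_2,(t,\infty)}$ taken separately --- whereas the asserted answer is written in the \emph{coupled} form built from $\vp_2$ and $\sup_t{\mathcal V}(t,x)\|u_2\|_{q_2,(t,\infty)}$. I would bridge this by invoking Lemma~\ref{gluing.lem} with $U=V$, $h(t)=\|u_2\|_{q_2,(t,\infty)}$, the measure $g(x)\,dx$ identified with $d\big(-\|u_1\|_{q_1,(0,x)}^{-q_1\rw p}\big)$, and $\beta$ tuned to the iteration exponents: the lemma identifies the coupled integrand $\big(\int_0^\infty {\mathcal V}(x,t)g\big)^{\beta-1}\big(\sup_t{\mathcal V}(t,x)h\big)^{\beta}$ with the sum of the two decoupled pieces, thereby turning $B_1+B_2$ (case (ii)), respectively the supremal analogue controlling $A_1$ (case (i)), into the stated $\vp_2$ and ${\mathcal V}$-supremum. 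Finally I would recombine via $\|\Id\|\ap(C_1^p+C_2^p)^{1/p}\ap C_1+C_2$. I expect the exponent bookkeeping in the gluing step --- matching $q_1\rw p$, $q_1\rw q_2$ and $q_2\rw p$ on both sides and checking that the decoupled output of Theorem~\ref{IterHarSupremal.1} really equals the right-hand side of Lemma~\ref{gluing.lem} --- to be the most delicate and error-prone part.
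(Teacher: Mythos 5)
Your proposal follows the paper's proof essentially verbatim: Lemma~\ref{mainlemma} plus Theorem~[\ref{Emb-Cop-Lp}, (ii)] to split off the boundary term, Theorem~[\ref{Emb-Lp-Cop}, (i)] for that term, Theorem~\ref{IterHarSupremal.1} for the main term (with exactly the identifications $q=\tfrac{q_1}{q_1-p}$, $\theta=\tfrac{q_2}{q_2-p}$, $v_\theta(x,\infty)=\|u_2\|_{q_2,(x,\infty)}^p$ you describe), and Lemma~\ref{gluing.lem} to recouple in case (ii). The only small deviations are that in case (i) the paper does not invoke any gluing at all --- it inserts the inner supremum simply from the monotonicity of $\vp_2/V$ --- and that the working substitution in Lemma~\ref{gluing.lem} is $U\ap V^{q_1\rw p}$, $h=\|u_2\|_{q_2,(\cdot,\infty)}^{q_1\rw p}$, $g(t)\,dt=V(t)^{q_1\rw p}\,d\big(-\|u_1\|_{q_1,(0,t)}^{-q_1\rw p}\big)$, $\b=\tfrac{q_1\rw q_2}{q_1\rw p}$, i.e.\ precisely the exponent bookkeeping you flagged as delicate.
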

\begin{proof}
By  Lemma \ref{mainlemma}, applying Theorem~[\ref{Emb-Cop-Lp}, (ii)], we get that
\begin{align*}
\|\Id\|_{\cop_{p_1,q_1}(u_1,v_1) \rw \ces_{p_2,q_2}(u_2,v_2)} \ap &
\|u_1\|_{q_1,(0,\I}^{-1}
\left\{\sup_{g\in \M^+\I} \ddfrac{ \|H^*g\|_{\infty,v^p,\I}} {\|g\|_{\frac{q_2}{q_2-p},u_2^{-p},\I}} \right\}^{\frac{1}{p}}\\
& + \left\{\sup_{g\in \M^+\I} \ddfrac{\bigg(\int_0^\i \|H^*g\|_{\i,v^p,(0,t)}^{\frac{q_1}{q_1-p}}
d \bigg(- \|u_1\|_{q_1,(0,t)}^{-\frac{q_1 p}{q_1 - p}}\bigg)\bigg)^{\frac{q_1-p}{q_1}}}
{\|g\|_{\frac{q_2}{q_2-p},u_2^{-p},\I}} \right\}^{\frac{1}{p}}\\
:= & C_3 + C_4.
\end{align*}
Note that
$$
C_3 = \|u_1\|_{q_1,\I}^{-1} \left[\|\Id\|_{L_{\frac{q_2}{q_2-p}}\big(u_2^{-p}\big)\rw \cop_{1, \i}(v^p, {\bf 1})}\right]^{\frac{1}{p}}.
$$
Using Theorem~[\ref{Emb-Lp-Cop}, (i)], we have that
\begin{equation*}
C_3 \ap \|u_1\|_{q_1,\I}^{-1} \sup_{t \in \I} V(t) \|u_2\|_{q_2,(t,\infty)}.
\end{equation*}

\item[(i)] Let  $q_1 \le q_2$, then Theorem~[\ref{IterHarSupremal.1}, (i)] yields that
\begin{equation*}
C_4 \ap \sup_{x\in \I} \bigg(\int_0^\i [{\mathcal V}(x,t)V(t)]^{q_1 \rw p} d \bigg( - \|u_1\|_{q_1,(0,t)}^{- q_1 \rw p}\bigg) \bigg)^{\frac{1}{q_1 \rw p}} \|u_2\|_{q_2,(x,\infty)}.
\end{equation*}
Since $\vp_2 / V$ is equivalent to decreasing function we have that
\begin{align*}
\sup_{x\in \I} \vp_2(x) \|u_2\|_{q_2,(x,\infty)} & = \sup_{x\in \I} \vp_2(x) V(x)^{-1}\, \sup_{t\in (0,x)} V(t) \,\|u_2\|_{q_2,(t,\infty)} \\
& = \sup_{x\in \I} \vp_2(x) \, \sup_{t\in (0,\infty)} {\mathcal V} (t,x) \,\|u_2\|_{q_2,(t,\infty)}, \qquad x > 0.
\end{align*}

\item[(ii)] Let  $q_2 < q_1$, then Theorem~[\ref{IterHarSupremal.1}, (ii)] yields that
\begin{align*}
C_4 \ap & \bigg(\int_0^{\infty}\bigg(\int_x^\i d \bigg(- \|u_1\|_{q_1,(0,t)}^{- q_1 \rw p}\bigg)\bigg)^{\frac{q_1 \rw q_2}{q_2 \rw p}} \bigg( \sup_{0 < \tau \le x} V(\tau)  \|u_2\|_{q_2,(\tau,\infty)}\bigg)^{q_1 \rw q_2} d \bigg( - \|u_1\|_{q_1,(0,x)}^{- q_1 \rw p}\bigg) \bigg)^{\frac{1}{q_1 \rw q_2}} \\
& +\bigg(\int_0^\i \bigg(\int_0^x V(t)^{q_1 \rw p} d \bigg( - \|u_1\|_{q_1,(0,t)}^{- q_1 \rw p}\bigg)\bigg)^{\frac{q_1 \rw q_2}{q_2 \rw p}} V(x)^{q_1 \rw p} \|u_2\|_{q_2,(x,\infty)}^{q_1 \rw q_2} d \bigg( - \|u_1\|_{q_1,(0,x)}^{- q_1 \rw p}\bigg)\bigg)^{\frac{1}{q_1 \rw q_2}}\\
\ap & \bigg( \int_0^\i \vp_2(x)^{\frac{q_1 \rw q_2 \cdot q_1 \rw p}{q_2 \rw p}} V(x)^{q_1 \rw p}  \bigg(\sup_{t\in(0,\infty)} {\mathcal V}(t,x) \|u_2\|_{q_2,(t,\infty)}\bigg)^{q_1 \rw q_2} d \bigg( - \|u_1\|_{q_1,(0,x)}^{- q_1 \rw p}\bigg) \bigg)^{\frac{1}{q_1 \rw q_2}}.
\end{align*}
In the last equivalence we have used Lemma \ref{gluing.lem} with
$$
u(x) = V(x)^{q_1 \rw p - 1}v(x), ~ g(t)dt = V(t)^{q_1 \rw p}d \bigg(- \|u_1\|_{q_1,(0,t)}^{- q_1 \rw p}\bigg),~ \b = \frac{q_1 \rw q_2}{q_1 \rw p} ~\mbox{and}~ h(t) = \|u_2\|_{q_2,(t,\infty)}^{q_1 \rw p}.
$$
It is clear that $U(x) \ap V(x)^{q_1 \rw p}$ and ${\mathcal U}(x,t) \ap {\mathcal V}(x,t)^{q_1 \rw p}$.
\end{proof}

\begin{bibdiv}
\begin{biblist}

\bib{askeyboas}{article}{
author={Askey, R.},
author={Boas, R. P., Jr.},
title={Some integrability theorems for power series with positive coefficients},
conference={title={Mathematical Essays Dedicated to A. J. Macintyre},},
book={publisher={Ohio Univ. Press, Athens, Ohio},},
date={1970},
pages={23--32},
review={\MR{0277956 (43 \#3689)}},
}

\bib{astas5}{article}{
author={Astashkin, S. V.},
title={On the geometric properties of Ces\`aro spaces},
language={Russian, with Russian summary},
journal={Mat. Sb.},
volume={203},
date={2012},
number={4},
pages={61--80},
issn={0368-8666},
translation={
journal={Sb. Math.},
volume={203},
date={2012},
number={3-4},
pages={514--533},
issn={1064-5616},
},
review={\MR{2976287}},
doi={10.1070/SM2012v203n04ABEH004232},
}

\bib{astasmal2008}{article}{
author={Astashkin, S. V.},
author={Maligranda, L.},
title={Ces\`aro function spaces fail the fixed point property},
journal={Proc. Amer. Math. Soc.},
volume={136},
date={2008},
number={12},
pages={4289--4294},
issn={0002-9939},
review={\MR{2431042 (2009g:46045)}},
doi={10.1090/S0002-9939-08-09599-3},
}

\bib{astasmal2009}{article}{
author={Astashkin, S. V.},
author={Maligranda, L.},
title={Structure of Ces\`aro function spaces},
journal={Indag. Math. (N.S.)},
volume={20},
date={2009},
number={3},
pages={329--379},
issn={0019-3577},
review={\MR{2639977 (2011c:46056)}},
doi={10.1016/S0019-3577(10)00002-9},
}

\bib{astasmal2010}{article}{
author={Astashkin, S. V.},
author={Maligranda, L.},
title={Rademacher functions in Ces\`aro type spaces},
journal={Studia Math.},
volume={198},
date={2010},
number={3},
pages={235--247},
issn={0039-3223},
review={\MR{2650988 (2011m:46040)}},
doi={10.4064/sm198-3-3},
}

\bib{astasmalig10}{article}{
author={Astashkin, S. V.},
author={Maligranda, L.},
title={Geometry of Ces\`aro function spaces},
language={Russian},
journal={Funktsional. Anal. i Prilozhen.},
volume={45},
date={2011},
number={1},
pages={79--82},
issn={0374-1990},
translation={
journal={Funct. Anal. Appl.},
volume={45},
date={2011},
number={1},
pages={64--68},
issn={0016-2663},
},
review={\MR{2848742 (2012f:46051)}},
doi={10.1007/s10688-011-0007-8},
}

\bib{astashkinmaligran11}{article}{
author={Astashkin, S. V.},
author={Maligranda, L.},
title={Interpolation of Ces\`aro sequence and function spaces},
journal={Studia Math.},
volume={215},
date={2013},
number={1},
pages={39--69},
issn={0039-3223},
review={\MR{3071806}},
doi={10.4064/sm215-1-4},
}

\bib{asmal12}{article}{
author={Astashkin, S. V.},
author={Maligranda, L.},
title={A short proof of some recent results related to Ces\`aro function
spaces},
journal={Indag. Math. (N.S.)},
volume={24},
date={2013},
number={3},
pages={589--592},
issn={0019-3577},
review={\MR{3064562}},
doi={10.1016/j.indag.2013.03.001},
}

\bib{asmal13}{article}{
author={Astashkin, S. V.},
author={Maligranda, L.},
title={Interpolation of Ces\`aro and Copson spaces},
conference={
title={Banach and function spaces IV (ISBFS 2012)},
},
book={
publisher={Yokohama Publ., Yokohama},
},
date={2014},
pages={123--133},
review={\MR{3289767}},
}

\bib{asmalsurvey}{article}{
author={Astashkin, S. V.},
author={Maligranda, L.},
title={Structure of Ces\`{a}ro function spaces: a survey},
journal={Banach Center Publ.},
volume={102},
date={2014},
pages={13--40},
}

\bib{belliftrig}{article}{
author={Belinskii, E. S.},
author={Liflyand, E. R.},
author={Trigub, R. M.},
title={The Banach algebra $A^*$ and its properties},
journal={J. Fourier Anal. Appl.},
volume={3},
date={1997},
number={2},
pages={103--129},
issn={1069-5869},
review={\MR{1438893 (98a:42003)}},
doi={10.1007/s00041-001-4052-1},
}

\bib{bennett1996}{article}{
author={Bennett, G.},
title={Factorizing the classical inequalities},
journal={Mem. Amer. Math. Soc.},
volume={120},
date={1996},
number={576},
pages={viii+130},
issn={0065-9266},
review={\MR{1317938 (96h:26020)}},
doi={10.1090/memo/0576},
}

\bib{boas1967}{book}{
author={Boas, R. P., Jr.},
title={Integrability theorems for trigonometric transforms},
series={Ergebnisse der Mathematik und ihrer Grenzgebiete, Band 38},
publisher={Springer-Verlag New York Inc., New York},
date={1967},
pages={v+66},
review={\MR{0219973 (36 \#3043)}},
}

\bib{boas1970}{article}{
author={Boas, R. P., Jr.},
title={Some integral inequalities related to Hardy's inequality},
journal={J. Analyse Math.},
volume={23},
date={1970},
pages={53--63},
issn={0021-7670},
review={\MR{0274685 (43 \#447)}},
}

\bib{cargogmarpick}{article}{
    author={Carro, M.},
    author={Gogatishvili, A.},
    author={Martin, J.},
    author={Pick, L.},
    title={Weighted inequalities involving two Hardy operators with
        applications to embeddings of function spaces},
    journal={J. Operator Theory},
    volume={59},
    date={2008},
    number={2},
    pages={309--332},
    issn={0379-4024},
    review={\MR{2411048 (2009f:26024)}},
}

\bib{chencuihudsims}{article}{
author={Chen, S.},
author={Cui, Y.},
author={Hudzik, H.},
author={Sims, B.},
title={Geometric properties related to fixed point theory in some Banach
function lattices},
conference={
title={Handbook of metric fixed point theory},
},
book={
publisher={Kluwer Acad. Publ., Dordrecht},
},
date={2001},
pages={339--389},
review={\MR{1904283 (2003f:46031)}},
}

\bib{CuiPluc}{article}{
author={Cui, Y.},
author={P{\l}uciennik, R.},
title={Local uniform nonsquareness in Ces\`aro sequence spaces},
journal={Comment. Math. Prace Mat.},
volume={37},
date={1997},
pages={47--58},
issn={0373-8299},
review={\MR{1608225 (99b:46025)}},
}

\bib{cuihud1999}{article}{
author={Cui, Y.},
author={Hudzik, H.},
title={Some geometric properties related to fixed point theory in
Ces\`aro spaces},
journal={Collect. Math.},
volume={50},
date={1999},
number={3},
pages={277--288},
issn={0010-0757},
review={\MR{1744077 (2001f:46033)}},
}

\bib{cuihud2001}{article}{
author={Cui, Y.},
author={Hudzik, H.},
title={Packing constant for Cesaro sequence spaces},
booktitle={Proceedings of the Third World Congress of Nonlinear Analysts,
Part 4 (Catania, 2000)},
journal={Nonlinear Anal.},
volume={47},
date={2001},
number={4},
pages={2695--2702},
issn={0362-546X},
review={\MR{1972393 (2004c:46033)}},
doi={10.1016/S0362-546X(01)00389-3},
}

\bib{cuihudli}{article}{
author={Cui, Y.},
author={Hudzik, H.},
author={Li, Y.},
title={On the Garcia-Falset coefficient in some Banach sequence
spaces},
conference={
title={Function spaces},
address={Pozna\'n},
date={1998},
},
book={
series={Lecture Notes in Pure and Appl. Math.},
volume={213},
publisher={Dekker, New York},
},
date={2000},
pages={141--148},
review={\MR{1772119 (2001h:46009)}},
}

\bib{cmp}{article}{
author={Cui, Y.},
author={Meng, C.-H.},
author={P{\l}uciennik, R.},
title={Banach-Saks property and property $(\beta)$ in Ces\`aro sequence
spaces},
journal={Southeast Asian Bull. Math.},
volume={24},
date={2000},
number={2},
pages={201--210},
issn={0129-2021},
review={\MR{1810056 (2001m:46031)}},
doi={10.1007/s100120070003},
}

\bib{ego2008}{article}{
author={Evans, W. D.},
author={Gogatishvili, A.},
author={Opic, B.},
title={The reverse Hardy inequality with measures},
journal={Math. Inequal. Appl.},
volume={11},
date={2008},
number={1},
pages={43--74},
issn={1331-4343},
review={\MR{2376257 (2008m:26029)}},
doi={10.7153/mia-11-03},
}

\bib{gil1970}{article}{
author={Gilbert, J. E.},
title={Interpolation between weighted $L^{p}$-spaces},
journal={Ark. Mat.},
volume={10},
date={1972},
pages={235--249},
issn={0004-2080},
review={\MR{0324393 (48 \#2745)}},
}

\bib{gmp}{article}{
author={Gogatishvili, A.},
author={Mustafayev, R. Ch.},
author={Persson, L.-E.},
title={Some new iterated Hardy-type inequalities},
journal={J. Funct. Spaces Appl.},
date={2012},
pages={Art. ID 734194, 30},
}

\bib{GogMusPers2}{article}{
author={Gogatishvili, A.},
author={Mustafayev, R. Ch.},
author={Persson, L.-E.},
title={Some new iterated Hardy-type inequalities: the case $\theta = 1$},
journal={J. Inequal. Appl.},
date={2013},
pages={29 pp.},
issn={},
doi={10.1186/1029-242X-2013-515},
}

\bib{GogMusIHI}{article}{
    author={Gogatishvili, A.},
    author={Mustafayev, R. Ch.},
    title={Weighted iterated Hardy-type inequalities},
    journal={Preprint, arXiv:1503.04079},
    date={2015},
    pages={},
    issn={},
    doi={},
}

\bib{GogMusISI}{article}{
    author={Gogatishvili, A.},
    author={Mustafayev, R. Ch.},
    title={Iterated Hardy-type inequalities involving suprema},
    journal={Preprint, arXiv:1504.03932},
    date={2015},
    pages={},
    issn={},
    doi={},
}

\bib{gop}{article}{
author={Gogatishvili, A.},
author={Opic, B.},
author={Pick, L.},
title={Weighted inequalities for Hardy-type operators involving suprema},
journal={Collect. Math.},
volume={57},
date={2006},
number={3},
pages={227--255},
}

\bib{gogperstepwall}{article}{
    author={Gogatishvili, A.},
    author={Persson, L.-E.},
    author={Stepanov, V. D.},
    author={Wall, P.},
    title={Some scales of equivalent conditions to characterize the Stieltjes
        inequality: the case $q < p$},
    journal={Math. Nachr.},
    volume={287},
    date={2014},
    number={2-3},
    pages={242--253},
    issn={0025-584X},
    review={\MR{3163577}},
    doi={10.1002/mana.201200118},
}

\bib{grosse}{book}{
author={Grosse-Erdmann, K.-G.},
title={The blocking technique, weighted mean operators and Hardy's
inequality},
series={Lecture Notes in Mathematics},
volume={1679},
publisher={Springer-Verlag, Berlin},
date={1998},
pages={x+114},
isbn={3-540-63902-0},
review={\MR{1611898 (99d:26024)}},
}

\bib{hashus}{article}{
   author={Hassard, B. D.},
   author={Hussein, D. A.},
   title={On Ces\`aro function spaces},
   journal={Tamkang J. Math.},
   volume={4},
   date={1973},
   pages={19--25},
   issn={0049-2930},
   review={\MR{0333700 (48 \#12025)}},
}

\bib{jagers}{article}{
author={Jagers, A. A.},
title={A note on Ces\`aro sequence spaces},
journal={Nieuw Arch. Wisk. (3)},
volume={22},
date={1974},
pages={113--124},
issn={0028-9825},
review={\MR{0348444 (50 \#942)}},
}

\bib{johnson1974}{article}{
author={Johnson, R.},
title={Lipschitz spaces, Littlewood-Paley spaces, and convoluteurs},
journal={Proc. London Math. Soc. (3)},
volume={29},
date={1974},
pages={127--141},
issn={0024-6115},
review={\MR{0355578 (50 \#8052)}},
}

\bib{kamkub}{article}{
author={Kami{\'n}ska, A.},
author={Kubiak, D.},
title={On the dual of Ces\`aro function space},
journal={Nonlinear Anal.},
volume={75},
date={2012},
number={5},
pages={2760--2773},
issn={0362-546X},
review={\MR{2878472 (2012m:46034)}},
doi={10.1016/j.na.2011.11.019},
}

\bib{kufmalpers}{book}{
author={Kufner, A.},
author={Maligranda, L.},
author={Persson, L.-E.},
title={The Hardy inequality},
note={About its history and some related results},
publisher={Vydavatelsk\'y Servis, Plze\v n},
date={2007},
pages={162},
isbn={978-80-86843-15-5},
review={\MR{2351524 (2008j:26001)}},
}

\bib{kp}{book}{
author={Kufner, A.},
author={Persson, L.-E.},
title={Weighted inequalities of Hardy type},
publisher={World Scientific Publishing Co. Inc.},
place={River Edge, NJ},
date={2003},
pages={xviii+357},
isbn={981-238-195-3},
review={\MR{1982932 (2004c:42034)}},
}

\bib{mu2015}{article}{
author={Mustafayev, R. Ch.},
author={{\"U}nver, T.},
title={Reverse Hardy-type ineqialities for supremal operators with measures},
journal={accepted in Math. Inequal. Appl.},
date={2015},
pages={},
issn={},
}

\bib{ok}{book}{
author={Opic, B.},
author={Kufner, A.},
title={Hardy-type inequalities},
series={Pitman Research Notes in Mathematics Series},
volume={219},
publisher={Longman Scientific \& Technical},
place={Harlow},
date={1990},
pages={xii+333},
isbn={0-582-05198-3},
review={\MR{1069756 (92b:26028)}},
}

\bib{r}{book}{
author={Rudin, W.},
title={Principles of mathematical analysis},
series={Second edition},
publisher={McGraw-Hill Book Co.},
place={New York},
date={1964},
pages={ix+270},
review={\MR{0166310 (29 \#3587)}},
}

\bib{prog}{article}{
author={},
title={Programma van Jaarlijkse Prijsvragen (Annual Problem Section)},
journal={Nieuw Arch. Wiskd.},
volume={16},
date={1968},
number={},
pages={47--51},
}

\bib{shiue}{article}{
author={Shiue, J.-S.},
title={A note on Ces\`aro function space},
journal={Tamkang J. Math.},
volume={1},
date={1970},
number={2},
pages={91--95},
issn={0049-2930},
review={\MR{0276751 (43 \#2491)}},
}

\bib{syzhanglee}{article}{
author={Sy, P. W.},
author={Zhang, W. Y.},
author={Lee, P. Y.},
title={The dual of Ces\`aro function spaces},
language={English, with Serbo-Croatian summary},
journal={Glas. Mat. Ser. III},
volume={22(42)},
date={1987},
number={1},
pages={103--112},
issn={0017-095X},
review={\MR{940098 (89g:46059)}},
}

\end{biblist}
\end{bibdiv}

\end{document}